\renewcommand{\le}{\leqslant}
\renewcommand{\ge}{\geqslant}
\renewcommand{\setminus}{\smallsetminus}
\renewcommand{\subset}{\subseteq}
\newcommand{\n}{\{1,\ldots,n\}}
\newcommand{\B}{\mathfrak{B}}
\newcommand{\e}{\varepsilon}
\newcommand{\W}{\mathsf{W}}
\renewcommand{\gamma}{\upgamma}
\newcommand{\1}{\mathbf 1}
\newcommand{\alert}[1]{\textbf{\color{red}
[[[#1]]]}\marginpar{\textbf{\color{red}**}}\typeout{ALERT:
\the\inputlineno: #1}}
\DeclareMathOperator{\dist}{\bf dist}
\newcommand{\Pp}{\mathscr{P}_{\!p}}
\newcommand{\diam}{{\rm diam}}
\newcommand{\N}{\mathbb{N}}
\newcommand{\R}{\mathbb{R}}
\newcommand{\E}{{\mathbb{E}}}
\newcommand{\mommit}[1]{}
\newcommand{\namedref}[2]{\hyperref[#2]{#1~\ref*{#2}}}
\theoremstyle{plain}
\newtheorem{theorem}{Theorem}
\newtheorem{lemma}[theorem]{Lemma}
\newtheorem{corollary}[theorem]{Corollary}
\newtheorem{remark}[theorem]{Remark}
\newtheorem{proposition}[theorem]{Proposition}
\theoremstyle{definition}
\newtheorem{question}[theorem]{Question}
\newenvironment{RETHM}[2]{\trivlist \item[\hskip 10pt\hskip\labelsep{\bf
#1\hskip 5pt\relax\ref{#2}.}]\it}{\endtrivlist}
\newcommand{\rethm}[1]{\begin{RETHM}{Theorem}{#1}}
\newcommand{\repro}[1]{\begin{RETHM}{Proposition}{#1}}
\newcommand{\relem}[1]{\begin{RETHM}{Lemma}{#1}}
\newcommand{\recor}[1]{\begin{RETHM}{Corollary}{#1}}
\newcommand{\erethm}{\end{RETHM}}
\renewcommand{\epsilon}{\varepsilon}
\renewcommand{\d}{\mathrm{d}}
\newcommand{\eqdef}{\stackrel{\mathrm{def}}{=}}
\title[Snowflake universality of Wasserstein spaces]{Snowflake universality  of Wasserstein spaces}
\author{Alexandr Andoni}
\address{Computer Science Department, Columbia University, 500 West 120th Street, Room 450, MC0401, New York, NY 10027, USA.}
\email{andoni@cs.columbia.edu}
\author{Assaf Naor}
\address{Mathematics Department\\ Princeton University\\ Fine Hall, Washington Road, Princeton, NJ 08544-1000, USA.}
\email{naor@math.princeton.edu}
\thanks{A.~N. was supported in part by the BSF, the Packard Foundation and the Simons Foundation.}
\author{Ofer Neiman}
\address{Department of Computer Science, Ben-Gurion University of the Negev, P.O.B 653 Be'er Sheva 84105, Israel}
\email{neimano@cs.bgu.ac.il}
\thanks{O.~N. was supported in part by the ISF and the European Union's Seventh
Framework Programme}
\begin{document}
\maketitle

\begin{abstract}
For $p\in (1,\infty)$ let $\Pp(\R^3)$ denote the metric space of all $p$-integrable Borel probability measures on $\R^3$, equipped with the Wasserstein $p$ metric $\W_p$. We prove that for every $\e>0$, every $\theta\in (0,1/p]$ and every finite metric space $(X,d_X)$, the metric space $(X,d_{X}^{\theta})$ embeds into $\Pp(\R^3)$ with distortion at most $1+\e$. We show that this is sharp when $p\in (1,2]$ in the sense that the exponent $1/p$ cannot be replaced by any larger number. In fact, for arbitrarily large $n\in \N$ there exists an $n$-point metric space $(X_n,d_n)$ such that for every $\alpha\in (1/p,1]$ any embedding of the metric space $(X_n,d_n^\alpha)$ into $\Pp(\R^3)$ incurs distortion that is at least a constant multiple of $(\log n)^{\alpha-1/p}$. These statements establish that there exists an Alexandrov space of nonnegative curvature, namely $\mathscr{P}_{\! 2}(\R^3)$, with respect to which there does not exist a sequence of bounded degree expander graphs. It also follows that $\mathscr{P}_{\! 2}(\R^3)$ does not admit a uniform, coarse, or quasisymmetric embedding into  any Banach space of nontrivial type. Links to several longstanding open questions in metric geometry are discussed, including the characterization of subsets of Alexandrov spaces, existence of expanders, the universality problem for $\mathscr{P}_{\! 2}(\R^k)$, and the metric cotype dichotomy problem.
\end{abstract}


\section{Introduction}

We shall start by quickly recalling basic notation and terminology from the theory of transportation cost metrics; all the necessary background can be found in~\cite{Vil03}. For a complete separable metric space $(X,d_X)$ and $p\in (0,\infty)$,
let $\Pp(X)$ denote the space  of all Borel probability measures
$\mu$ on $X$ satisfying $$\int_X d_X(x,x_0)^p\d\mu(x)<\infty$$ for some
(hence all) $x_0\in X$. A coupling of a pair of Borel probability measures $(\mu,\nu)$ on $X$ is a Borel probability measure $\pi$ on $X\times X$ such that $\mu(A)=\pi(A\times X)$ and $\nu(A)=\pi(X\times A)$ for every Borel measurable $A\subset X$. The set of couplings of $(\mu,\nu)$ is denoted $\Pi(\mu,\nu)$. The Wasserstein $p$ distance between $\mu,\nu\in \Pp(X)$ is defined to be
$$
\W_p(\mu,\nu)\eqdef \inf_{\pi\in \Pi(\mu,\nu)}\bigg(\iint_{X\times X} d_X(x,y)^p\d\pi(x,y)\bigg)^{\frac{1}{p}}.
$$
$\W_p$ is a metric on $\Pp(x)$ whenever $p\ge 1$. The metric space $(\Pp(X),\W_p)$ is called the Wasserstein $p$ space over $(X,d_X)$. Unless stated otherwise, in the ensuing discussion whenever we refer to the metric space $\Pp(X)$ it will be understood that $\Pp(X)$ is equipped with the metric $\W_p$.

\subsection{Bi-Lipschitz Embeddings} Suppose that $(X,d_X)$ and $(Y,d_Y)$ are metric spaces and that $D\in [1,\infty]$. A mapping $f:X\to Y$ is said to have distortion at most $D$ if there exists $s\in (0,\infty)$ such that every $x,y\in X$ satisfy $sd_X(x,y)\le d_Y(f(x),f(y))\le Dsd_X(x,y)$. The infimum over those $D\in [1,\infty]$ for which this holds true is called the distortion of $f$ and is denoted $\dist(f)$. If there exists a mapping $f:X\to Y$ with distortion at most $D$ then we say that $(X,d_X)$ embeds with distortion $D$ into $(Y,d_Y)$. The infimum of $\dist(f)$ over all $f:X\to Y$ is denoted $c_{(Y,d_Y)}(X,d_X)$, or $c_Y(X)$ if the metrics are clear from the context.

\subsection{Snowflake universality} Below, unless stated otherwise, $\R^n$ will be endowed with the standard Euclidean metric. Here we show that $\Pp(\R^3)$ exhibits the following universality phenomenon.

\begin{theorem}\label{thm:main snowflake}
If $p\in (1,\infty)$ then for every finite metric space $(X,d_X)$ we have
$$
c_{\left(\Pp(\R^3),\W_p\right)}\Big(X,d_X^{\frac{1}{p}}\Big)=1.
$$
\end{theorem}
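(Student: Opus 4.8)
The plan is to realize an arbitrary finite metric space $(X,d_X)$, after snowflaking by $1/p$, isometrically (up to $1+\e$) inside $\Pp(\R^3)$ by mapping each point of $X$ to a finitely supported probability measure built from well-separated ``bump'' configurations on a low-dimensional grid. The starting observation is the classical fact that if $x,y$ are two points and $\mu,\nu$ are uniform measures on two disjoint sets of the same cardinality that are ``far apart'' relative to their diameters, then $\W_p(\mu,\nu)^p$ is essentially the number of points times $d(\text{clusters})^p$; more precisely, one wants a gadget in which $\W_p$ between the two measures is, up to $1+\e$, a prescribed target value. So I would first reduce \theoremref{thm:main snowflake} to the following combinatorial core: given target values $a_{xy}=d_X(x,y)$ for $x,y\in X$ (an $|X|\times|X|$ symmetric matrix vanishing on the diagonal and satisfying the triangle inequality), construct finitely supported measures $\mu_x$ on $\R^3$ with $\W_p(\mu_x,\mu_y)=(1\pm\e)\,a_{xy}^{1/p}$ for all $x,y$.

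Next I would build these measures hierarchically. Think of a huge common ``ground'' measure $\mu_0$ supported on a very fine grid in, say, the plane $\R^2\times\{0\}\subset\R^3$ with total mass $1$, and then obtain $\mu_x$ from $\mu_0$ by moving a tiny fraction of the mass off the plane in the third coordinate, in a pattern indexed by $x$. Concretely, partition the support of $\mu_0$ into many identical cells; for each unordered pair $\{x,y\}$ reserve a disjoint block of cells, and within that block encode the discrepancy between $\mu_x$ and $\mu_y$: on some cells push a sliver of $\mu_x$'s mass upward by height $h_{xy}$, on others do the same for $\mu_y$, arranged so that the cheapest coupling between $\mu_x$ and $\mu_y$ must pay, in the $\{x,y\}$-block, a cost whose $p$-th power is $\approx (\text{reserved mass})\cdot h_{xy}^p$, while being free (or $O(\e)$-cheap) everywhere else. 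Choosing $h_{xy}$ proportional to $d_X(x,y)$ and the reserved mass proportional to $d_X(x,y)^{p-1}$ (so the contribution is $\propto d_X(x,y)^p$, whose $p$-th root is $d_X(x,y)$, i.e.\ the snowflake $d_X(x,y)^{1/p}$ after the outer $1/p$-power), and making the horizontal grid fine enough that horizontal transport is negligible compared to the vertical heights, yields $\W_p(\mu_x,\mu_y)^p=(1\pm\e)\sum_{\{u,v\}} (\text{stuff})$; the point is to make this sum telescope/collapse to the single pair $\{x,y\}$ term. The cleanest way to force that is a ``shared reference'' design: let all the cross-pair interactions go through $\mu_0$, i.e.\ design each $\mu_x$ so that $\W_p(\mu_x,\mu_0)^p\approx\tfrac12\max_z d_X(x,z)^p$ is the same for all $x$ up to the discrepancies, OR — more robustly — use the additivity of optimal transport cost over the disjoint blocks together with the triangle inequality on $(X,d_X)$ to show the optimal coupling is block-diagonal and the dominant block is the $\{x,y\}$ one.

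The third step is the quantitative heart: proving the lower bound $\W_p(\mu_x,\mu_y)\ge (1-\e)d_X(x,y)^{1/p}$, i.e.\ that no clever coupling can beat the ``honest'' one. For this I would use the Kantorovich--Rubinstein duality (or just a direct potential/test-function argument): exhibit a $1$-Lipschitz function $f:\R^3\to\R$ (for $p=1$) or, for general $p$, an explicit coupling-free lower bound via a convexity/$L^p$ argument, that certifies the cost. The natural test function depends only on the third coordinate and is essentially linear near $0$, so it separates the lifted slivers from the ground measure by exactly their heights. I expect \textbf{this lower bound to be the main obstacle}, because in Wasserstein $p$ for $p>1$ duality is less directly usable and one must rule out ``mass rerouting'' where the coupling moves mass horizontally between different blocks to exploit the geometry; controlling that requires the horizontal grid to be fine on a scale that is itself $\e$-dependent and block-dependent, and one must check the various length scales ($h_{xy}$, cell size, block separation, total support diameter) can be chosen consistently with all the $O(\e)$ error terms — this is the bookkeeping that makes the construction work in only $3$ dimensions rather than a dimension growing with $|X|$. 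Once the two-point estimate holds for every pair simultaneously with multiplicative error $1\pm\e$, rescaling gives distortion $\le\frac{1+\e}{1-\e}$, and letting $\e\to0$ yields $c_{\Pp(\R^3)}(X,d_X^{1/p})=1$; since this holds for every finite $X$, the theorem follows. Finally I would remark that the upper bound $1/p$ on the snowflake exponent, claimed in the abstract, is not needed here and is treated separately.
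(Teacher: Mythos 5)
Your high-level framing (finitely supported measures built from a large shared configuration plus per-pair gadgets, with the lower bound over all couplings as the main difficulty) points in the right direction, but the specific design you propose has a gap that the paper's construction is built precisely to avoid. In your scheme each $\mu_x$ differs from the ground measure $\mu_0$ in \emph{every} block $\{x,w\}$, $w\neq x$, so for a pair $x,z$ the symmetric difference of $\mu_x$ and $\mu_z$ lives in all blocks $\{x,w\}$ and $\{z,w\}$, and any coupling must pay for the lifted slivers in all of those blocks, not just the $\{x,z\}$ one; the resulting cost is $\sum_w(\text{mass}_{xw}h_{xw}^p+\text{mass}_{zw}h_{zw}^p)$ plus cross terms, and there is no reason for this to collapse to the single $\{x,z\}$ term. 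You flag this ("make the sum telescope") but do not resolve it, and neither of your two suggested fixes works as stated. (Your scaling is also internally inconsistent: to get $\W_p^p\approx d_X(x,y)$, i.e.\ $\W_p\approx d_X(x,y)^{1/p}$, you need $\text{mass}\cdot h^p\propto d$, which with $h\propto d$ forces mass $\propto d^{1-p}$, not $d^{p-1}$.) The paper's key structural idea is different: \emph{all} the pair gadgets are contained in every measure's support, and $f(x_i)$ and $f(x_j)$ are uniform measures on sets differing in exactly one atom each (a point on the $x$-axis). A coupling is then a doubly stochastic matrix, hence a convex combination of permutations, hence (via cycle decomposition) reduces to a chain $u_0=\frac{Mi}{m}e_1,u_1,\dots,u_L=\frac{Mj}{m}e_1$ through the shared atoms with cost $\frac1N\sum_\ell\|u_\ell-u_{\ell-1}\|_2^p$. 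The gadget for $\{i,j\}$ is a chain of $5K+2$ points containing $K$-many tiny steps (whose total $p$-th-power cost is $O(K^{1-p})$, negligible for $p>1$ --- this is exactly where the $1/p$-snowflake comes from) plus one jump of length $d_X(x_i,x_j)^{1/p}/m$.

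The second gap is the lower bound. Kantorovich--Rubinstein duality with a test function of the third coordinate does not certify the bound for $p>1$, and you do not supply a substitute. The paper instead partitions the shared configuration into clusters $\mathscr{S}_1,\dots,\mathscr{S}_n$ (one per point of $X$), proves by a coordinate case analysis --- this is where all three dimensions of $\R^3$ are used, to keep distinct gadgets from offering shortcuts --- that any $u\in\mathscr{S}_a$, $v\in\mathscr{S}_b$ satisfy $\|u-v\|_2\geq d_X(x_a,x_b)^{1/p}/m$, and then applies the triangle inequality in $(X,d_X)$ along the chain to get $\sum_\ell\|u_\ell-u_{\ell-1}\|_2^p\geq d_X(x_i,x_j)/m^p$. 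Without (i) the one-atom-difference reduction to chains, (ii) the short-steps-plus-one-jump mechanism for the exponent $1/p$, and (iii) the cluster-separation argument replacing duality, the proof does not go through, so I would count this as a genuinely incomplete proposal rather than an alternative proof.
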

For a metric space $(X,d_X)$ and $\theta\in (0,1]$, the metric space $(X,d_X^\theta)$ is commonly called the $\theta$-{\em snowflake} of $(X,d_X)$; see e.g.~\cite{DS97}. Thus Theorem~\ref{thm:main snowflake} asserts that the $\theta$-snowflake of any finite metric space $(X,d_X)$ embeds with distortion $1+\e$ into $\Pp(\R^3)$ for every $\e\in (0,\infty)$ and $\theta\in (0,1/p]$ (formally, Theorem~\ref{thm:main snowflake}  makes this assertion when $\theta=1/p$, but for general  $\theta\in (0,1/p]$  one can then apply Theorem~\ref{thm:main snowflake} to the metric space $(X,d_X^{\theta p})$ to deduce the seemingly more general statement).

Theorem~\ref{thm:no large snowflake}  below implies that Theorem~\ref{thm:main snowflake} is sharp   if $p\in (1,2]$, and yields a nontrivial, though probably non-sharp, restriction on the embeddability of snowflakes into $\Pp(\R^3)$ also for $p\in (2,\infty)$.

\begin{theorem}\label{thm:no large snowflake} For arbitrarily large $n\in \N$ there exists an $n$-point metric space $(X_n,d_{X_n})$ such that for every $\alpha \in (0,1]$ we have
\begin{equation*}
c_{(\Pp(\R^3),\W_p)}(X_n,d_{X_n}^\alpha)\gtrsim \left\{\begin{array}{ll}(\log n)^{\alpha-\frac{1}{p}}&\mathrm{if}\ p\in (1,2],\\
(\log n)^{\alpha+\frac{1}{p}-1}&\mathrm{if}\ p\in (2,\infty).\end{array}\right.
\end{equation*}
\end{theorem}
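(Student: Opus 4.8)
Throughout, take $(X_n,d_{X_n})$ to be the shortest-path metric of a fixed sequence of bounded-degree expander graphs $G_n$ on $n$ vertices. I will use three standard facts about such graphs: (i) at least a fixed fraction of the pairs $(u,v)$ of vertices satisfy $d_{X_n}(u,v)\gtrsim \log n$ (this uses only bounded degree), and $\diam(X_n)\asymp\log n$; (ii) the spectral gap is bounded below by an absolute constant, whence, by Matou\v{s}ek's extrapolation lemma, for every $q\in[1,\infty)$ the Poincar\'e inequality
\[
\frac{1}{n^2}\sum_{u,v\in V(G_n)}\|h(u)-h(v)\|_{L_q}^{q}\;\lesssim_q\;\frac{1}{|E(G_n)|}\sum_{\{u,v\}\in E(G_n)}\|h(u)-h(v)\|_{L_q}^{q}
\]
holds for every $h\colon V(G_n)\to L_q$. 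Combining (i) and (ii) yields, for every $\alpha\in(0,1]$, the lower bound $c_{L_q}(X_n,d_{X_n}^{\alpha})\gtrsim_q(\log n)^{\alpha}$: for a distortion-$D$ embedding $h$, normalized so that $d_{X_n}(u,v)^{\alpha}\le\|h(u)-h(v)\|_{L_q}\le D\,d_{X_n}(u,v)^{\alpha}$, the left-hand side above is $\gtrsim(\log n)^{q\alpha}$ while the right-hand side is $\lesssim D^{q}$.

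The main new input is an embedding statement for Wasserstein spaces over fixed-dimensional Euclidean space, which I will call the Key Lemma: \emph{for every $p\in(1,\infty)$, every $N$-point subset of $\Pp(\R^3)$ embeds into $L_p$ with distortion $O_p\big((\log N)^{\beta(p)}\big)$, where $\beta(p)\eqdef\max\{1/p,\,1-1/p\}$.} Equivalently, one can phrase the needed input as a Poincar\'e inequality for $\Pp(\R^3)$ with respect to $G_n$ carrying a logarithmic loss: for every $g\colon V(G_n)\to\Pp(\R^3)$ the average of $\W_p(g(u),g(v))^{p}$ over all pairs is at most $(\log n)^{\max\{1,\,p-1\}}$ times its average over the edges of $G_n$.

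Granting the Key Lemma, the theorem follows immediately. Let $g\colon(X_n,d_{X_n}^{\alpha})\to\Pp(\R^3)$ have distortion $D$; its image has at most $n$ points, so by the Key Lemma it embeds into $L_p$ with distortion $O_p\big((\log n)^{\beta(p)}\big)$. Composition gives $c_{L_p}(X_n,d_{X_n}^{\alpha})\le D\cdot O_p\big((\log n)^{\beta(p)}\big)$, and comparing with the lower bound of the first paragraph (with $q=p$) gives $(\log n)^{\alpha}\lesssim_p D\,(\log n)^{\beta(p)}$, i.e. $D\gtrsim_p(\log n)^{\alpha-\beta(p)}$. Since $\beta(p)=1/p$ for $p\in(1,2]$ and $\beta(p)=1-1/p$ for $p\in(2,\infty)$, this is exactly the asserted estimate. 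This is moreover sharp when $p\le 2$: unsnowflaking costs a factor $\diam(X_n)^{\alpha-1/p}\asymp(\log n)^{\alpha-1/p}$, so $(X_n,d_{X_n}^{\alpha})=\big(X_n,(d_{X_n}^{1/p})^{\alpha p}\big)$ embeds into $\Pp(\R^3)$ with distortion $(1+\e)(\log n)^{\alpha-1/p}$ by composing this rescaling with Theorem~\ref{thm:main snowflake}.

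The crux is therefore the Key Lemma, which I would prove by a randomly shifted multiscale (quadtree) decomposition of $\R^3$. After normalizing and, at the cost of a $(1+\e)$ factor, snapping all masses to a grid at the finest relevant scale and discarding faraway mass, one reduces to $N$ measures supported on $\{0,\dots,M\}^3$ with $M=\mathrm{poly}(N)$, so that a randomly shifted hierarchical grid has only $L=O(\log N)$ levels; the embedding sends $\mu$ to $\big(\mathrm{side}(c)\,\mu(c)\big)_{c}\in\ell_p$, indexed by the cells $c$ of the hierarchy. Routing mass through the hierarchy and using $(\sum_i a_i)^{p}\ge\sum_i a_i^{p}$ for $a_i\ge0$, $p\ge1$, shows the $\ell_p$ distance between images is always $\lesssim_p\W_p(\mu,\nu)$; the reverse inequality is where the $(\log N)^{\beta(p)}$ loss enters and is proved in expectation over the random shift, the exponent $1-1/p$ coming from an $\ell_p$-sum over the $O(\log N)$ scales and the exponent $1/p$ (which dominates when $p\le 2$) from aggregating the per-cell surplus masses within a scale. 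Making this per-scale bookkeeping tight, so that the exponent is exactly $\beta(p)$, is the step I expect to be the main obstacle; the discretization reduction, though routine, also needs care because an a priori embedding into $\Pp(\R^3)$ may use measures that are not finitely supported.
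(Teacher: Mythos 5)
Your overall architecture (an expander-based lower bound for $L_p$-embeddings of snowflakes, composed with an embedding of finite subsets of $\Pp(\R^3)$ into $L_p$) is internally coherent, and the first and third paragraphs are fine as far as they go. But the argument reduces the entire theorem to your ``Key Lemma,'' and that lemma is not proved; it is, as far as I can tell, an open statement that is at least as hard as the theorem itself, and quite possibly harder. Two concrete points of failure in your sketch of it: (1) the discretization step. An arbitrary $N$-point subset of $\Pp(\R^3)$ cannot be snapped to a grid of size $\mathrm{poly}(N)$: two measures can be at small $\W_p$-distance while sharing mass spread over an arbitrarily large spatial range at arbitrarily fine resolution, so the number of quadtree levels $L$ is controlled by the spatial aspect ratio of the supports, not by $N$. (Indeed, the image of an expander under the embedding of Theorem~\ref{thm:main snowflake} lives on a grid whose side length involves the parameter $K$ of~\eqref{eq:K assumption}, which is superpolynomial in $n$.) Even for $p=1$ the known quadtree-type embeddings of EMD into $\ell_1$ have distortion logarithmic in the \emph{grid size}, not in the number of measures. (2) The exponent. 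For $p=2$ your Key Lemma asserts that every $N$-point subset of the Alexandrov space $\mathscr{P}_{\!2}(\R^3)$ embeds into Hilbert space with distortion $O(\sqrt{\log N})$; nothing better than Bourgain's generic $O(\log N)$ is known here, and plugging $O(\log N)$ into your composition gives $D\gtrsim (\log n)^{\alpha-1}$, which is vacuous. So the exponent $\beta(p)$ is exactly where all the content lies, and you have explicitly deferred it.

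The paper avoids any embedding of $\Pp(\R^3)$ into a Banach space. It instead verifies, via Riesz--Thorin, the two-point inequality $\E\big[|Z-Z'|^p\big]\le 2^{\theta_p p}\,\E\big[|Z-a|^p\big]$ for i.i.d.\ real random variables with $\theta_p=\max\{1/p,1-1/p\}$ (note that this is the same exponent as your $\beta(p)$, which is not a coincidence: both quantify an $L_p$ symmetrization constant); shows by a coupling argument that this inequality is inherited by $\Pp(X)$ from $X$ (Lemma~\ref{lem:p sturm}); deduces the Markov type estimate $M_p(\Pp(\R^3);2^k)\le 2^{k(\theta_p-1/p)}$ (Lemma~\ref{lem:powers of 2}); and then applies the standard Markov-type obstruction to snowflakes of the Hamming cube (Lemma~\ref{lem:markov type no snowflake}). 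If you want to salvage your route, you would need to either prove the Key Lemma (which would be a significant new result) or replace it by an intrinsic Poincar\'e-type inequality for $\Pp(\R^3)$-valued functions on expanders with loss $(\log n)^{\max\{1,p-1\}}$; the latter is essentially what the Markov-type machinery delivers, but for the hypercube rather than for expanders, and without passing through $L_p$.
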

Here, and in what follows, we use standard asymptotic notation, i.e., for $a,b\in [0,\infty)$ the notation $a\gtrsim b$ (respectively $a\lesssim b$) stands for $a\ge c b$ (respectively $a\le cb$) for some universal constant $c\in (0,\infty)$. The notation $a\asymp b$ stands for $(a\lesssim b)\wedge (b\lesssim a)$. If we need to allow the implicit constant to depend on parameters we indicate this by subscripts, thus $a\lesssim_p b$ stands for $a\le c_p b$ where $c_p$ is allowed to depend only on $p$, and similarly for the notations $\gtrsim_p$ and $\asymp_p$.

We conjecture that when $p\in (2,\infty)$ the lower bound in Theorem~\eqref{thm:no large snowflake}  could be improved to
$$
c_{(\Pp(\R^3),\W_p)}(X_n,d_{X_n}^\alpha)\gtrsim_p (\log n)^{\alpha-\frac12},
$$
and, correspondingly, that the conclusion of Theorem~\ref{thm:main snowflake} could be improved to state that if $p\in (2,\infty)$ then  $c_{\left(\Pp(\R^3),\W_p\right)}\!\left(X,\sqrt{d_X}\right)\lesssim_p 1$ for every finite metric space $(X,d_X)$; see Question~\ref{Q:p>2} below.

There are several motivations for our investigations that led to Theorem~\ref{thm:main snowflake} and  Theorem~\ref{thm:no large snowflake}. Notably, we are inspired by a longstanding open question of Bourgain~\cite{Bou86}, as well as fundamental questions on the geometry of Alexandrov spaces. We shall now explain these links.

\subsection{Alexandrov geometry} We need to briefly present some standard  background on metric spaces that are either nonnegatively curved or nonpositively curved in the sense of Alexandrov; the relevant background  can be found in e.g.~\cite{BGP92,BH99}. Let $(X,d_X)$ be a complete geodesic metric space. Recall that $w\in X$ is called a metric midpoint of  $x,y\in X$ if $d_X(x,w)=d_X(y,w)=d_X(x,y)/2$.  The metric space $(X,d_X)$ is said to be an Alexandrov space of nonnegative curvature if for every $x,y,z\in X$ and every metric midpoint $w$ of $x,y$,
\begin{equation}\label{eq:def alexandrov nonnegative}
d_X(x,y)^2+4d_X(z,w)^2\ge 2d_X(x,z)^2+2d_X(y,z)^2.
\end{equation}
Correspondingly, the metric space $(X,d_X)$ is said to be an Alexandrov space of nonpositive curvature, or a Hadamard space, if for every $x,y,z\in X$ and every metric midpoint $w$ of $x,y$,
\begin{equation}\label{eq:def hadamrd}
d_X(x,y)^2+4d_X(z,w)^2\le 2d_X(x,z)^2+2d_X(y,z)^2.
\end{equation}
If $(X,d_X)$ is a Hilbert space then, by the parallelogram identity, the inequalities~\eqref{eq:def alexandrov nonnegative} and~\eqref{eq:def hadamrd} hold true as equalities (with $w=(x+y)/2$). So, \eqref{eq:def alexandrov nonnegative} and~\eqref{eq:def hadamrd} are both natural relaxations of a stringent Hilbertian  identity (both relaxations have far-reaching implications). A complete Riemannian manifold is an Alexandrov space of nonnegative curvature if and only if its sectional curvature is nonnegative everywhere, and a complete simply connected Riemannian manifold is a Hadamard space if and only if its sectional curvature is nonpositive everywhere.

Following~\cite{Ott01}, it was shown in~\cite[Proposition~2.10]{Stu06} and~\cite[Appendix~A]{LV09} that $\mathscr{P}_{\!2}(\R^n)$ is an Alexandrov space of nonnegative curvature for every $n\in \N$; more generally, if $(X,d_X)$ is an Alexandrov space of nonnegative curvature then so is $\mathscr{P}_{\!2}(X)$. It therefore follows from Theorem~\ref{thm:main snowflake} that there exists an Alexandrov  space $(Y,d_Y)$ of nonnegative curvature that contains a bi-Lipschitz copy of the $1/2$-snowflake of every finite metric space, with distortion at most $1+\e$ for every $\e>0$. When this happens, we shall say that $(Y,d_Y)$ is $1/2$-snowflake universal.

\subsection{Subsets of Alexandrov spaces}\label{sec:char} It is a longstanding open problem, stated by Gromov in~\cite[Section $1.19_+$]{Gro99} and~\cite[\S15(b)]{Gro01}, as well as in, say, \cite{FLS07,AKP11,overflow},  to find an intrinsic characterization of those metric spaces that admit a bi-Lipschitz, or even isometric, embedding into an Alexandrov space of either nonnegative or nonpositive curvature.

Berg and Nikolaev~\cite{BN07,BN08} (see also~\cite{Sat09}) proved that a complete metric space $(X,d_X)$ is a Hadamard space if and only if it is geodesic and every $x_1,x_2,x_3,x_4\in X$ satisfy
\begin{equation}\label{eq:roundness 2}
d_X(x_1,x_3)^2+d_X(x_2,x_4)^2\le d_X(x_1,x_2)^2+d_X(x_2,x_3)^2+d_X(x_3,x_4)^2+d_X(x_4,x_1)^2.
\end{equation}
Inequality~\eqref{eq:roundness 2} is known in the literature under several names, including Enflo's ``roundness 2 property" (see~\cite{Enf69}), ``the short diagonal inequality" (see~\cite{Mat02}), or simply ``the quadrilateral inequality," and it has a variety of important  applications. Another characterization of this nature is due to Foertsch, Lytchak and Schroeder~\cite{FLS07}, who proved that a complete metric space $(X,d_X)$ is a Hadamard space if and only if it is geodesic, every  $x_1,x_2,x_3,x_4\in X$ satisfy the inequality
\begin{equation}\label{eq:potelmy intro}
d_X(x_1,x_3)\cdot d_X(x_2,x_4)\le d_X(x_1,x_2)\cdot d_X(x_3,x_4)+d_X(x_2,x_3)\cdot d_X(x_1,x_4),
\end{equation}
and if $w$ is a metric midpoint of $x_1$ and $x_2$ and $z$ is a metric midpoint of $x_3$ and $x_4$ then we have
\begin{equation}\label{eq:busemann}
d_X(w,z)\le \frac{d_X(x_1,x_3)+d_X(x_2,x_4)}{2}.
\end{equation}
\eqref{eq:potelmy intro} is called the Ptolemy inequality~\cite{FS11}, and condition~\eqref{eq:busemann} is called Busemann convexity~\cite{Bus55}.

Turning now to characterizations of nonnegative curvature, Lebedeva and Petrunin~\cite{LP10} proved that a complete metric space $(X,d_X)$ is an Alexandrov space of nonnegative curvature if and only if it is geodesic and every $x,y,z,w\in X$ satisfy
$$
d_X(x,w)^2+d_X(y,w)^2+d_X(z,w)^2\ge \frac{d_X(x,y)^2+d_X(x,z)^2+d_X(y,z)^2}{3}.
$$
Another (related) important characterization of Alexandrov spaces of nonnegative curvature asserts that a metric space $(X,d_X)$ is an Alexandrov spaces of nonnegative curvature if and only if it is geodesic and for every finitely supported $X$-valued random variable $Z$ we have
\begin{equation}\label{eq:LSS}
\E\big[d_X(Z,Z')^2\big]\le 2\inf_{x\in X}\E\big[d_X(Z,x)^2\big],
\end{equation}
where $Z'$ is an independent copy of $Z$. The above characterization  is due to Sturm~\cite{Stu99}, with the fact that nonnegative curvature in the sense of Alexandrov implies the validity of~\eqref{eq:LSS} being due to  Lang and Schroeder~\cite{LS97}. Following e.g.~\cite{Yok12}, condition~\eqref{eq:LSS} (which we shall use in Section~\ref{sec:sharpness}) is therefore called the Lang--Schroeder--Sturm inequality.

The above statements are interesting characterizations of spaces that are isometric to Alexandrov spaces of either nonpositive or nonnegative curvature, but they fail to characterize {\em subsets} of such spaces, since they require additional convexity properties of the metric space in question, such as being geodesic or Busemann convex. These assumptions are not intrinsic because they stipulate the existence of auxiliary points (metric midpoints) which may fall outside the given subset. Furthermore, these characterizations are {\em isometric} in nature, thus failing to address the important question of understanding when, given $D\in (1,\infty)$,  a metric space $(X,d_X)$ embeds with distortion at most $D$ into some Alexandrov space of either nonpositive or nonnegative curvature. One can search for such characterizations only among families of {\em quadratic metric inequalities}, as we shall now explain; in our context this is especially natural because the definitions~\eqref{eq:def alexandrov nonnegative} and~\eqref{eq:def hadamrd} are themselves quadratic.

\subsubsection{Quadratic metric inequalities}\label{sec:quadratic} For $n\in \N$ and $n$ by $n$  matrices $A=(a_{ij}), B=(b_{ij})\in M_n(\R)$  with nonnegative entries, say that a metric space $(X,d_X)$ satisfies the $(A,B)$-quadratic metric inequality if for every $x_1,\ldots,x_n\in X$ we have
\begin{equation*}\label{eq:quad met def AB}
\sum_{i=1}^n\sum_{j=1}^n a_{ij}d_X(x_i,x_j)^2\le \sum_{i=1}^n\sum_{j=1}^n b_{ij}d_X(x_i,x_j)^2.
\end{equation*}
The property of satisfying the $(A,B)$-quadratic metric inequality is clearly preserved by forming Pythagorean products, i.e., if $(X,d_X)$ and $(Y,d_Y)$ both satisfy the $(A,B)$-quadratic metric inequality then so does their Pythagorean product $(X\oplus Y)_2$. Here $(X\oplus Y)_2$ denotes the space $X\times Y$, equipped with the metric that is defined by
$$
\forall (a,b),(\alpha,\beta)\in X\times Y,\qquad d_{(X\oplus Y)_2}\big((a,b),(\alpha,\beta)\big)\eqdef \sqrt{d_X(a,\alpha)^2+d_Y(b,\beta)^2}.
$$
 The $(A,B)$-quadratic metric inequality is also preserved by ultraproducts (see e.g.~\cite[Section~2.4]{KL98} for background on ultraproducts of metric spaces), and it is a bi-Lipschitz invariant in the sense that if $(X,d_X)$ embeds with distortion at most $D\in [1,\infty)$ into $(Y,d_Y)$, and $(Y,d_Y)$ satisfies the $(A,B)$-quadratic metric inequality then $(X,d_X)$ satisfies the $(A,D^2B)$-quadratic metric inequality.

The following proposition is a converse to the above discussion.

\begin{proposition}\label{prop:duality}
Let $\mathscr{F}$ be a family of metric spaces that is closed under dilation and Pythagorean products, i.e., if $(U,d_U),(V,d_V)\in \mathscr{F}$ and $s\in (0,\infty)$ then also $(U,sd_U)\in \mathscr{F}$ and  $(U\oplus V)_2\in \mathscr{F}$. Fix $D\in [1,\infty)$ and $n\in \N$. Then an $n$-point metric space $(X,d_X)$ satisfies $$\inf_{(Y,d_Y)\in \mathscr{F}} c_Y(X)\le D$$ if and only if for every two $n$ by $n$ matrices $A,B\in M_n(\R)$ with nonnegative entries such that every $(Z,d_Z)\in \mathscr{F}$ satisfies the $(A,B)$-quadratic metric inequality, we also have that  $(X,d_X)$ satisfies the $(A,D^2B)$-quadratic metric inequality.
\end{proposition}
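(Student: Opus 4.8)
The plan is to recognize this as a linear-programming (LP) duality statement, hidden behind a layer of metric-geometry formalism: the quantity $\inf_{(Y,d_Y)\in\mathscr{F}} c_Y(X)$ is (after squaring) the optimum of a certain convex optimization problem over the squared distance matrix of $X$, and the ``$(A,B)$-quadratic metric inequalities valid on $\mathscr{F}$'' are precisely the dual certificates. One direction is the easy one and was already observed in the discussion preceding the statement: if $(X,d_X)$ embeds with distortion at most $D$ into some $(Y,d_Y)\in\mathscr{F}$, and $(Y,d_Y)$ satisfies the $(A,B)$-quadratic metric inequality, then $(X,d_X)$ satisfies the $(A,D^2B)$-quadratic metric inequality; this is just monotonicity of the two bilinear forms under the two-sided distortion bound, and it is robust to taking an infimum over $\mathscr{F}$. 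So the content is the converse: assuming $(X,d_X)$ satisfies every $(A,D^2B)$ inequality that $\mathscr{F}$ forces, produce a space $(Y,d_Y)\in\mathscr{F}$ with $c_Y(X)\le D$ (strictly speaking $\le D+\e$ for every $\e$, which suffices since the infimum is what is asserted).

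For the converse I would argue by separation/duality as follows. Fix the point set $X=\{x_1,\dots,x_n\}$ and write $\delta_{ij}=d_X(x_i,x_j)^2$. Consider the set $\mathcal{K}\subset\R^{\binom n2}$ of all vectors $(\rho_{ij})$ of the form $\rho_{ij}=d_Z(z_i,z_j)^2$ for some $(Z,d_Z)\in\mathscr{F}$ and points $z_1,\dots,z_n\in Z$, renormalized (using closure under dilation) so that $\rho_{ij}\ge \delta_{ij}$ for all $i,j$ — i.e. intersect with the ``lower bound'' constraint coming from the non-contractive side of the embedding. The statement ``$\exists (Y,d_Y)\in\mathscr{F}$ with $c_Y(X)\le D$'' is then equivalent to asking that the scaled target cube $\{(\rho_{ij}): \delta_{ij}\le \rho_{ij}\le D^2\delta_{ij}\}$ meets $\mathcal K$; equivalently, using closure under dilation to fix the scale, that the point $(\delta_{ij})$ lies in the closure of $\mathcal K' \eqdef \{(\rho_{ij})\in\R^{\binom n2}: \exists(Z,d_Z)\in\mathscr F,\ z_1,\dots,z_n\in Z,\ \tfrac{1}{D^2}\rho_{ij}\le d_Z(z_i,z_j)^2\le \rho_{ij}\}$. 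The key structural point is that closure of $\mathscr F$ under dilation and Pythagorean products makes $\mathcal K'$ a \emph{convex cone}: the Pythagorean product realizes coordinatewise sums of squared-distance vectors, and dilation realizes nonnegative scalings, so $\mathcal K'$ is closed under addition and nonnegative scalar multiplication. (One should also pass to the closure of this cone, which is harmless: ultraproducts of $\mathscr F$ are available and the infimum in the statement already absorbs the closure; this is exactly why the statement asserts ``$\le D$'' rather than ``is attained''.) Now $(\delta_{ij})\notin \overline{\mathcal K'}$ if and only if, by the separating hyperplane theorem for a closed convex cone, there is a linear functional — a matrix of coefficients $(c_{ij})$, which we split into its positive part $A$ and the negative of its negative part so as to write $\langle c,\rho\rangle = \sum b_{ij}\rho_{ij} - \sum a_{ij}\rho_{ij}$ with $A,B\ge 0$ — that is nonnegative on $\overline{\mathcal K'}$ and strictly negative at $(\delta_{ij})$. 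Unwinding, ``nonnegative on $\overline{\mathcal K'}$'' says exactly that every $(Z,d_Z)\in\mathscr F$ satisfies the $(A,D^2B)$-quadratic metric inequality (the factor $D^2$ appearing because of the two-sided bound in the definition of $\mathcal K'$), while ``strictly negative at $(\delta_{ij})$'' says exactly that $(X,d_X)$ \emph{violates} the $(A,D^2B)$-quadratic metric inequality. Contrapositive: if $(X,d_X)$ satisfies all such inequalities, then $(\delta_{ij})\in\overline{\mathcal K'}$, i.e. $\inf_{(Y,d_Y)\in\mathscr F} c_Y(X)\le D$. (Some care is needed to reshuffle the $D^2$ between the two sides; one convenient bookkeeping is to let the embedding be $\frac1{D}$-Lipschitz and non-contractive, so the target cone constraint reads $\rho_{ij}\le d_Z(z_i,z_j)^2\le D^2\rho_{ij}$, matching the $(A,D^2B)$ form verbatim.)

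I expect the main obstacle to be \textbf{not} the duality bookkeeping but the \emph{closedness/limiting} issue: $\mathcal K'$ as literally defined need not be closed, so one must either take its closure and check that the closure is still ``realized'' by $\mathscr F$ in the approximate sense ($\le D+\e$), or invoke closure of $\mathscr F$ under ultraproducts (noted in the excerpt) to realize limit points exactly. Relatedly, one must verify that the separating functional can indeed be normalized to have the clean $A,B\ge 0$ split with the right homogeneity — this is where the cone (rather than merely convex set) structure, guaranteed by closure under dilation, is essential, since it lets us take the separating hyperplane through the origin and avoid an affine constant term. A final small point to handle is that $\mathscr F$ must be nonempty and contain spaces rich enough that $\mathcal K'$ has nonempty interior (or at least that $(\delta_{ij})$ can be separated properly); in the intended application $\mathscr F$ is e.g. the class of Alexandrov spaces of nonnegative curvature, which certainly contains $\R^n$ and hence all finite metric spaces' squared-distance vectors arising from Hilbertian configurations, so this is automatic. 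Modulo these topological subtleties, the proof is a direct application of convex cone duality to the cone generated, under $\oplus_2$ and dilation, by the squared-distance vectors of $n$-tuples in members of $\mathscr F$.
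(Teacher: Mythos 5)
Your proposal is correct and is essentially the paper's proof: the nontrivial direction is in both cases a separating-hyperplane argument for convex cones, with closure of $\mathscr{F}$ under dilation and Pythagorean products supplying, respectively, the nonnegative-scaling and additive closure of the cone of squared-distance matrices (the paper separates this cone from the cone of matrices sandwiched between $s\,d_X^2$ and $(D+\e)^2 s\,d_X^2$, rather than separating the single point $(d_X(x_i,x_j)^2)$ from your $D$-thickened cone, but the two bookkeepings are equivalent). The closedness issue you flag is resolved in the paper without closures or ultraproducts, by using only non-strict separation of the two disjoint cones and extracting the needed strict inequality from the gap between $D^2$ and $(D+\e)^2$; and your $D^2$ reshuffling does work out, since nonnegativity of the functional on your cone amounts to $\mathscr{F}$ satisfying the $(A,D^{-2}B)$-inequality rather than the $(A,D^2B)$-inequality, to which the hypothesis applied verbatim returns $\sum_{i,j}(b_{ij}-a_{ij})d_X(x_i,x_j)^2\ge 0$, contradicting strict separation.
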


The proof of Proposition~\ref{prop:duality} appears in Section~\ref{sec:duality} below and consists of a duality argument that mimics the proof of Proposition~15.5.2 in~\cite{Mat02}, which deals with embeddings into Hilbert space.

\begin{remark}{\em It is a formal consequence of Proposition~\ref{prop:duality} that if the family of metric spaces $\mathscr{F}$  is also closed under ultraproducts, as are Alexandrov spaces with upper or lower curvature bounds (see e.g.~\cite[Section~2.4]{KL98}), then one does not need to restrict to finite metric spaces. Namely, in this case a metric space $(X,d_X)$  admits a bi-Lipschitz embedding into some $(Y,d_Y)\in \mathscr{F}$ if and only if there exists $D\in [1,\infty)$ such that $(X,d_X)$ satisfies the $(A,D^2B)$-quadratic metric inequality for every two $n$ by $n$ matrices $A,B\in M_n(\R)$ with nonnegative entries such that every $(Z,d_Z)\in \mathscr{F}$ satisfies the $(A,B)$-quadratic metric inequality.}
\end{remark}

\begin{remark}{\em
The Ptolemy inequality~\eqref{eq:potelmy intro} is not a quadratic metric inequality, yet it holds true in any Hadamard space. Proposition~\ref{prop:duality} implies that the Ptolemy inequality could be deduced from quadratic metric inequalities that hold true in Hadamard spaces. This is carried out explicitly in Section~\ref{sec:hadamard}  below, yielding an instructive proof (and  strengthening) of the Ptolemy inequality in Hadamard spaces that is conceptually different from its previously known proofs~\cite{FLS07,BFW09}.
}
\end{remark}

Theorem~\ref{thm:main snowflake} implies that all the quadratic metric inequalities that hold true in every Alexandrov space of nonnegative curvature ``trivialize" if one does not square the distances. Specifically, since $\mathscr{P}_{\!2}(\R^3)$ is an Alexandrov space of nonnegative curvature, the following statement is an immediate consequence of Theorem~\ref{thm:main snowflake}.

\begin{theorem}\label{thm:all about squares}
Suppose that $A,B\in M_n(\R)$ are $n$ by $n$ matrices with nonnegative entries such that every Alexandrov space of nonnegative curvature satisfies the $(A,B)$-quadratic metric inequality. Then for every metric space $(X,d_X)$ and every $x_1,\ldots,x_n\in X$ we have
\begin{equation}\label{eq:quadratic trivialized}
\sum_{i=1}^n\sum_{j=1}^n a_{ij} d_X(x_i,x_j)\le \sum_{i=1}^n\sum_{j=1}^n b_{ij}d_X(x_i,x_j).
\end{equation}
\end{theorem}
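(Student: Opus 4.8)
The plan is to deduce~\eqref{eq:quadratic trivialized} by transplanting the $(A,B)$-quadratic metric inequality from $\mathscr{P}_{\!2}(\R^3)$ through an almost-isometric snowflake embedding furnished by Theorem~\ref{thm:main snowflake} in the case $p=2$. Fix a metric space $(X,d_X)$ and points $x_1,\dots,x_n\in X$. Pairs of coinciding points contribute $0$ to both sides of~\eqref{eq:quadratic trivialized}, so it suffices to work with the finite metric space induced by the distinct elements among $x_1,\dots,x_n$. Given $\e\in(0,\infty)$, apply Theorem~\ref{thm:main snowflake} with $p=2$ to this finite metric space: since $c_{(\mathscr{P}_{\!2}(\R^3),\W_2)}(X,\sqrt{d_X})=1$, there exist a map $f\colon\{x_1,\dots,x_n\}\to\mathscr{P}_{\!2}(\R^3)$ and a scaling factor $s\in(0,\infty)$ such that for all $i,j\in\n$,
\[
s\sqrt{d_X(x_i,x_j)}\ \le\ \W_2\big(f(x_i),f(x_j)\big)\ \le\ (1+\e)s\sqrt{d_X(x_i,x_j)},
\]
and hence, squaring,
\[
s^2 d_X(x_i,x_j)\ \le\ \W_2\big(f(x_i),f(x_j)\big)^2\ \le\ (1+\e)^2 s^2 d_X(x_i,x_j).
\]

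Next I would use the fact recalled in the introduction (following Otto, and established by Sturm and by Lott--Villani) that $\mathscr{P}_{\!2}(\R^3)$ is an Alexandrov space of nonnegative curvature; by the hypothesis on $A$ and $B$ it therefore satisfies the $(A,B)$-quadratic metric inequality. Applying that inequality to the points $f(x_1),\dots,f(x_n)$ gives
\[
\sum_{i=1}^n\sum_{j=1}^n a_{ij}\,\W_2\big(f(x_i),f(x_j)\big)^2\ \le\ \sum_{i=1}^n\sum_{j=1}^n b_{ij}\,\W_2\big(f(x_i),f(x_j)\big)^2.
\]
Now I would invoke the nonnegativity of the entries of $A$ and $B$ together with the two-sided estimate above: bound the left-hand side from below by $s^2\sum_{i,j} a_{ij} d_X(x_i,x_j)$ and the right-hand side from above by $(1+\e)^2 s^2\sum_{i,j} b_{ij} d_X(x_i,x_j)$. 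Dividing through by $s^2>0$ yields
\[
\sum_{i=1}^n\sum_{j=1}^n a_{ij} d_X(x_i,x_j)\ \le\ (1+\e)^2\sum_{i=1}^n\sum_{j=1}^n b_{ij} d_X(x_i,x_j),
\]
and letting $\e\to 0$ produces~\eqref{eq:quadratic trivialized}.

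There is essentially no genuine obstacle here; the argument is a bookkeeping of distortion, and the only two substantive ingredients are already available. The role of the exponent $2$ is precisely that squaring the $1/2$-snowflake metric $\sqrt{d_X}$ returns $d_X$ itself, which is what converts a \emph{quadratic} inequality in $\W_2$ into a \emph{linear} inequality in $d_X$; and the only structural input about transportation cost spaces is that $\mathscr{P}_{\!2}(\R^3)$ carries nonnegative Alexandrov curvature, so that it inherits every $(A,B)$-quadratic metric inequality in the hypothesis. The passage from finite to arbitrary metric spaces requires no extra work since~\eqref{eq:quadratic trivialized} refers only to the finitely many points $x_1,\dots,x_n$, handled by the restriction described at the outset.
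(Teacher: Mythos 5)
Your proof is correct and is precisely the argument the paper intends: the paper states Theorem~\ref{thm:all about squares} as an ``immediate consequence'' of Theorem~\ref{thm:main snowflake} combined with the fact that $\mathscr{P}_{\!2}(\R^3)$ is an Alexandrov space of nonnegative curvature, which is exactly the distortion-bookkeeping you carried out. The squaring of the $(1+\e)$-distorted $1/2$-snowflake embedding, the use of nonnegativity of the entries of $A$ and $B$, and the limit $\e\to 0$ are all as the paper envisions.
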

While Theorem~\ref{thm:all about squares} does not answer the question of characterizing those quadratic metric inequalities that hold true in any Alexandrov space of nonnegative curvature, it does show that such inequalities rely crucially on the fact that distances are being squared, i.e., if one removes the squares then one arrives at an inequality~\eqref{eq:quadratic trivialized} which must be nothing more than a consequence of the triangle inequality.

Obtaining a full characterization of those quadratic metric inequalities that hold true in any Alexandrov space of nonnegative curvature remains an important challenge. Many such inequalities are known, including, as shown by Ohta~\cite{Oht09}, Markov type 2 (note, however, that the supremum of the  Markov type $2$ constants of all Alexandrov spaces of nonnegative curvature is an unknown universal constant~\cite{OP09}; we obtain the best known bound on this constant in   Corollary~\ref{eq:Mtype 2 const} below). Another family of nontrivial quadratic metric inequalities that hold true in any Alexandrov space of nonnegative curvature is obtained in~\cite{AN09}, where it is shown  that all such spaces have Markov convexity $2$. By these observations combined with the nonlinear Maurey--Pisier theorem~\cite{MN08}, we know that there exists $q<\infty$ such that any Alexandrov space of nonnegative curvature has metric cotype $q$. It is natural to conjecture that one could take $q=2$ here, but at present this remains open. For more on the notions discussed above, i.e., Markov type, Markov convexity and metric cotype, as well as their applications, see the survey~\cite{Nao12} and the references therein.

The above discussion in the context of Hadamard spaces remains an important open problem. At present we do not know of any metric space $(X,d_X)$ such that the metric space $(X,\sqrt{d_X})$ fails to admit a bi-Lipschitz embedding into some Hadamard space. More generally, while a variety of nontrivial quadratic metric inequalities are known to hold true in any Hadamard space, a full characterization of such inequalities remains elusive. In Section~\ref{sec:hadamard} below we formulate a systematic way to generate such inequalities, posing the question whether the hierarchy of inequalities thus obtained yields a characterization of those metric spaces that admit a bi-Lipschitz embedding into some Hadamard space.

\subsubsection{Uniform, coarse and quasisymmetric embeddings}\label{sec:uniform}
A metric space $(X,d_X)$ is said to embed uniformly into a metric space $(Y,d_Y)$ if there exists an injection $f:X\to Y$ such that both $f$ and $f^{-1}$ are uniformly continuous. $(X,d_X)$ is said~\cite{Gro93} to embed coarsely into $(Y,d_Y)$ if there exists $f:X\to Y$ and nondecreasing functions $\alpha,\beta:[0,\infty)\to [0,\infty)$ with $\lim_{t\to \infty}\alpha(t)=\infty$ such that
\begin{equation}\label{eq:def coarse}
\forall\, x,y\in  X,\qquad \alpha(d_X(x,y))\le d_Y(f(x),f(y))\le \beta(d_X(x,y)).
\end{equation}
$(X,d_X)$ is said~\cite{BA56,TV80} to admit a quasisymmetric embedding into $(Y,d_Y)$ if there exists an injection $f:X\to Y$ and $\eta:(0,\infty)\to (0,\infty)$ with $\lim_{t\to 0}\eta(t)=0$ such that for every distinct $x,y,z\in X$,
$$
\frac{d_Y(f(x),f(y))}{d_Y(f(x),f(z))}\le \eta\left(\frac{d_X(x,y)}{d_X(x,z)}\right).
$$

A direct combination of Theorem~\ref{thm:main snowflake} with the results of~\cite{MN08,Nao12-quasi} shows that $\mathscr{P}_{\!2}(\R^3)$ does not embed even in the above weak senses into any Banach space of nontrivial (Rademacher) type; we refer to the survey~\cite{Mau03} and the references therein for more on the notion of type of Banach spaces. In particular, $\mathscr{P}_{\!2}(\R^3)$ fails to admit such embeddings into any $L_p(\mu)$ space for finite $p$ (for the case $p=1$, use the fact that the $1/2$-snowflake of an $L_1(\mu)$ space embeds isometrically into a Hilbert space; see~\cite{WW75}), or, say, into any uniformly convex Banach space. It remains an interesting open question whether or not these assertions also hold true for $\mathscr{P}_{\!2}(\R^2)$.

\begin{theorem}\label{thm:no embed quasi} If $p>1$ then
$\mathscr{P}_{\!p}(\R^3)$ does not admit a uniform, coarse or quasisymmetric embedding into any Banach space of nontrivial type.
\end{theorem}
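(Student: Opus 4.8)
The plan is to deduce the theorem by combining the snowflake universality of Theorem~\ref{thm:main snowflake} with the rigidity of Banach spaces of nontrivial type under uniform, coarse and quasisymmetric embeddings of snowflaked expander graphs, following~\cite{MN08,Nao12-quasi}. Fix a sequence $\{G_n\}_{n=1}^\infty$ of $3$-regular graphs with $|V(G_n)|=n$ whose second-largest adjacency eigenvalue is at most $(1-\delta)\cdot 3$ for some universal $\delta>0$ (such sequences are well known to exist); then $\{G_n\}$ is a sequence of bounded-degree expanders, and since $|\{y:d_{G_n}(x,y)\le r\}|\le 2\cdot 3^{r}$ a $(1-o(1))$-fraction of the pairs $x,y\in V(G_n)$ satisfy $d_{G_n}(x,y)\ge c\log n$ for some universal $c>0$. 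Because $\Pp(\R^3)$ admits a homothety of every ratio (push the measures forward under a dilation of $\R^3$), Theorem~\ref{thm:main snowflake} (applied with $\e=1$) yields, for every $n$ and every $s>0$, a map $g_n^s\colon V(G_n)\to\Pp(\R^3)$ with
$$
s\, d_{G_n}(x,y)^{1/p}\le \W_p\big(g_n^s(x),g_n^s(y)\big)\le 2s\, d_{G_n}(x,y)^{1/p}\qquad\text{for all } x,y\in V(G_n).
$$
A distortion-$2$ embedding is in particular quasisymmetric with a universal modulus, so composing $g_n^s$ with a hypothetical uniform / coarse / quasisymmetric embedding $h\colon\Pp(\R^3)\to Y$ into a Banach space $Y$ produces, for each $n$, a uniform / coarse / quasisymmetric embedding of the $1/p$-snowflake $(V(G_n),d_{G_n}^{1/p})$ into $Y$ whose moduli do not depend on $n$; the scale $s$ may be chosen at will.

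Next recall from~\cite{MN08} that a Banach space $Y$ of nontrivial type has finite metric cotype, and that one extracts from this a Poincar\'e-type inequality on bounded-degree expanders: there exist $q=q(Y)<\infty$ and $C=C(Y,\delta)<\infty$ such that every $f\colon V(G_n)\to Y$ satisfies
$$
\frac{1}{n^2}\sum_{x,y\in V(G_n)}\|f(x)-f(y)\|_Y^q\ \le\ \frac{C}{|E(G_n)|}\sum_{\{x,y\}\in E(G_n)}\|f(x)-f(y)\|_Y^q.
$$
This is precisely the inequality through which~\cite{MN08} forbids coarse and uniform embeddings of expanders into $Y$, and it is insensitive to snowflaking. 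For the coarse case, let $h$ have moduli $\alpha\le\beta$ as in~\eqref{eq:def coarse} and set $s=1$: then $f=h\circ g_n^1$ has $\|f(x)-f(y)\|_Y\le\beta(2)$ on every edge of $G_n$, while $\|f(x)-f(y)\|_Y\ge\alpha\big((c\log n)^{1/p}\big)$ for a $(1-o(1))$-fraction of the pairs $x,y$. Substituting into the displayed inequality gives $(1-o(1))\,\alpha\big((c\log n)^{1/p}\big)^q\le C\beta(2)^q$, which is impossible for large $n$ since $\lim_{t\to\infty}\alpha(t)=\infty$.

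The uniform and quasisymmetric cases require the known refinements of this scheme, and the quasisymmetric one is the main obstacle. For a uniform embedding the lower modulus of $h$ need not tend to infinity, so the crude edge-versus-typical-pair substitution fails; instead one uses that $Y$ is a Banach space to apply the metric cotype inequality of~\cite{MN08} at a judiciously chosen scale $s$, exactly as in the proof that expanders do not uniformly embed into Banach spaces of nontrivial type --- the extra $1/p$-snowflaking merely rescales the distances appearing there. For a quasisymmetric embedding the snowflake $(V(G_n),d_{G_n}^{1/p})$ has aspect ratio of order $(\log n)^{1/p}\to\infty$, so its image under $h$ need not be bi-Lipschitz equivalent to it with an $n$-independent distortion, and the bi-Lipschitz obstruction cannot be invoked directly; one must instead run the metric cotype inequality along $h$ scale by scale, which is carried out in~\cite{Nao12-quasi}. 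Granting the coarse and uniform rigidity of~\cite{MN08} and the quasisymmetric rigidity of~\cite{Nao12-quasi}, the composition $h\circ g_n^s$ of the first paragraph yields a contradiction in each case, proving the theorem; only the quasisymmetric step goes substantially beyond routine composition.
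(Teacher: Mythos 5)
Your first paragraph is sound and matches the intended mechanism: compose the distortion-$(1+\e)$ embeddings of snowflakes furnished by Theorem~\ref{thm:main snowflake} with a hypothetical embedding $h:\Pp(\R^3)\to Y$ and contradict a metric inequality that every Banach space $Y$ of nontrivial type must satisfy. The gap is in the inequality you invoke. You assert that, because $Y$ has finite metric cotype, ``one extracts from this a Poincar\'e-type inequality on bounded-degree expanders,'' valid for an \emph{arbitrary} $3$-regular spectral expander sequence. No such extraction is known: whether every Banach space of nontrivial type (or even of finite cotype) satisfies the expander inequality~\eqref{eq:expander def} for all bounded-degree spectral expanders is a well-known open problem, and this paper itself stresses the point --- see the discussion surrounding~\eqref{eq:expander def} and the short list of available methods~\cite{Mat97,Oza04,Laf08,Laf09,MN14,Lia14,Nao14,MN-duke,Mim15}. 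The metric cotype inequality of~\cite{MN08} is a statement about functions on the discrete tori $\mathbb{Z}_m^n$, and the nonembeddability theorems of~\cite{MN08} are obstructions to equi-uniform and equi-coarse embeddings of the $\ell_\infty$ grids $[m]_\infty^n$, not of expanders. (For the coarse case alone one could try to salvage your scheme by replacing arbitrary spectral expanders with Lafforgue's special expander families, which are known to satisfy~\eqref{eq:expander def} with respect to Banach spaces of nontrivial type, but that is not what you wrote and it would not address the other two cases.)

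A second, independent defect is the uniform case: a sequence of finite metric spaces all of whose nonzero distances are at least $1$ (such as $(V(G_n),d_{G_n}^{1/p})$ at your scale $s=1$) imposes no constraint whatsoever on uniform embeddability, since uniform embeddings are restricted only by small-scale behavior; your appeal to ``the proof that expanders do not uniformly embed into Banach spaces of nontrivial type'' refers to a result that does not exist. The intended argument is the following. Snowflaking by the exponent $1/p$ is simultaneously a uniform, a coarse and a quasisymmetric equivalence, so Theorem~\ref{thm:main snowflake} implies that \emph{every} finite metric space --- in particular every grid $[m]_\infty^n$, rescaled to any desired scale --- embeds into $\Pp(\R^3)$ with moduli governed by a single universal function. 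Composing with $h$ would therefore embed the grids equi-uniformly (respectively equi-coarsely, equi-quasisymmetrically) into $Y$, and this is precisely what the metric cotype machinery of~\cite{MN08} (uniform and coarse cases) and of~\cite{Nao12-quasi} (quasisymmetric case) rules out when $Y$ has nontrivial type. Replacing your expander Poincar\'e inequality by this grid obstruction repairs the proof; as written, the proposal rests on an unproven (and likely very hard) assertion.
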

Note that a positive resolution of a key conjecture of~\cite{MN08}, namely the first question in Section~8 of~\cite{MN08},  would ``upgrade" Theorem~\ref{thm:no embed quasi} to the (best possible) assertion that $\mathscr{P}_{\!2}(\R^3)$ does not admit a uniform, coarse or quasisymmetric embedding into any Banach space of finite cotype.

\begin{remark}\label{rem:torus sum}{\em
Very few other examples of Alexandrov spaces of nonnegative curvature with poor embeddability properties into Banach spaces are known, all of which are not known to satisfy properties as strong as the conclusion of  Theorem~\ref{thm:no embed quasi}. Specifically, in~\cite{AN09} it is shown that $\mathscr{P}_{\!2}(\R^2)$ fails to admit a bi-Lipschitz embedding into $L_1$. A construction with stronger properties follows from the earlier work~\cite{KN06}, combined with the recent methods of~\cite{Nao14}. Specifically, it follows from~\cite{KN06} and~\cite{Nao14} that for every $n\in \N$ there exists a lattice $\Lambda_n\subset \R^n$ of rank $n$ such that if we consider  the following infinite Pythagorean product of flat tori
 \begin{equation}\label{eq:def torus sum}
 \mathscr{T}\eqdef  \bigg(\bigoplus_{n=1}^\infty \R^n/\Lambda_n\bigg)_{\!2},
 \end{equation}
 then $\mathscr{T}$ fails to admit a uniform or coarse embedding into a certain class of Banach spaces that includes all Banach lattices of finite cotype and all the noncommutative $L_p$ spaces for finite $p\ge 1$.  Since for every $n\in \N$ the sectional curvature of $\R^n/\Lambda_n$ vanishes, it is an Alexandrov space of nonnegative curvature, and therefore so is the Pythagorean product $\mathscr{T}$. It remains an interesting open question whether or not $\mathscr{T}$ admits a uniform, coarse or quasisymmetric embedding into some Banach space of nontrivial type, and, for that matter, even whether or not $\mathscr{T}$ is $1/2$-snowflake universal. We speculate that the answer to the latter question is negative.}
 \end{remark}

\subsubsection{Expanders with respect to Alexandrov spaces}\label{sec:expanders} Fixing an integer $k\ge 3$, an unbounded sequence of $k$-regular finite  graphs $\{(V_{j},E_{j})\}_{j=1}^\infty$ is said to be an expander with respect to a metric space $(X,d_X)$ if  for every $j\in \N$ and $\{x_u\}_{u\in V_j}\subset X$ we have
\begin{equation}\label{eq:expander def}
\frac{1}{|V_{j}|^2}\sum_{(u,v)\in V_{j}\times V_j} d_X(x_u,x_v)^2\asymp_X \frac{1}{|E_j|}\sum_{\{u,v\}\in E_{j}} d_X(x_u,x_v)^2.
\end{equation}
Unless $X$ is a singleton, a sequence of expanders with respect to $(X,d_X)$ must also be a sequence of expanders in the classical (combinatorial) sense. See~\cite{NS11,MN14,MN-duke,Nao14,Now15} and the references therein for background on expanders with respect to metric spaces and their applications.

In contrast to the case of classical expanders, the question of understanding when a metric space $X$ admits an expander sequence seems to be very difficult (even in the special case when $X$ is a Banach space), with limited availability of methods~\cite{Mat97,Oza04,Laf08,Laf09,MN14,Lia14,Nao14,MN-duke,Mim15} for establishing metric inequalities such as~\eqref{eq:expander def}. Theorem~\ref{thm:main snowflake} implies that $\mathscr{P}_{\!p}(\R^3)$ fails to admit a sequence of expanders for every $p\in (1,\infty)$. The particular case $p=2$ establishes for the first time the (arguably surprising) fact that there exists an Alexandrov space of nonnegative curvature with respect to which expanders do not exist.

\begin{theorem}\label{thm:no expander} For $p>1$ no sequence of bounded degree graphs is an expander with respect to $\Pp(\R^3)$.
\end{theorem}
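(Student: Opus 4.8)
The plan is to derive \theoremref{thm:no expander} directly from \theoremref{thm:main snowflake}, combined with the elementary fact that the shortest‑path metric of a bounded‑degree graph has average distance of order $\log(\text{number of vertices})$. Given \theoremref{thm:main snowflake} the argument is short, and essentially all of the content sits in that theorem; the role of the snowflake exponent $1/p$ (which is sharp by \theoremref{thm:no large snowflake}) is exactly to make the comparison below go through.

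I would argue by contradiction. Suppose $\{(V_j,E_j)\}_{j=1}^\infty$ is an unbounded sequence of graphs of maximum degree at most a fixed $k\in\N$ that is an expander with respect to $\Pp(\R^3)$. By~\eqref{eq:expander def} there is a constant $C\in(0,\infty)$ (depending only on $p$, and uniform in $j$) such that for every $j\in\N$ and every $\{x_u\}_{u\in V_j}\subset \Pp(\R^3)$,
\[
\frac{1}{|V_j|^2}\sum_{(u,v)\in V_j\times V_j}\W_p(x_u,x_v)^2\ \le\ C\cdot\frac{1}{|E_j|}\sum_{\{u,v\}\in E_j}\W_p(x_u,x_v)^2.
\]
Let $d_{V_j}$ be the graph metric of $(V_j,E_j)$ and fix $\e\in(0,1)$. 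Applying \theoremref{thm:main snowflake} to the finite metric space $(V_j,d_{V_j})$ produces $s_j\in(0,\infty)$ and a map $f_j:V_j\to\Pp(\R^3)$ with
\[
s_j\,d_{V_j}(u,v)^{1/p}\ \le\ \W_p\big(f_j(u),f_j(v)\big)\ \le\ (1+\e)\,s_j\,d_{V_j}(u,v)^{1/p}\qquad\text{for all }u,v\in V_j.
\]

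Now I would set $x_u=f_j(u)$ in the expander inequality. Every edge $\{u,v\}\in E_j$ satisfies $d_{V_j}(u,v)=1$, so $\W_p(f_j(u),f_j(v))^2\le(1+\e)^2 s_j^2$, and hence the right‑hand side is at most $C(1+\e)^2 s_j^2$. For the left‑hand side, $\W_p(f_j(u),f_j(v))^2\ge s_j^2\,d_{V_j}(u,v)^{2/p}$; and since the ball of radius $r$ about any vertex of a graph of maximum degree $\le k$ contains at most $1+k+\dots+k^r\le k^{r+1}$ vertices, for every $u\in V_j$ at least $|V_j|/2$ vertices $v$ satisfy $d_{V_j}(u,v)\ge\rho_j:=\log_k(|V_j|/2)-1$, so that $\frac{1}{|V_j|^2}\sum_{(u,v)}\W_p(f_j(u),f_j(v))^2\ge\frac12 s_j^2\rho_j^{2/p}$. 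Comparing the two sides gives $\rho_j^{2/p}\le 2C(1+\e)^2$, which is impossible for $j$ large since $\rho_j\to\infty$ as $|V_j|\to\infty$. This contradiction proves the theorem, and the same argument works verbatim for $\Pp(\R^m)$ with any integer $m\ge 3$. The only point that needs care is the interplay between the exponent $1/p$ and the squaring of distances in~\eqref{eq:expander def}: squaring converts the $(1+\e)$‑embedding into a two‑sided comparison with $d_{V_j}(u,v)^{2/p}$, and it is because the exponent $2/p$ is \emph{positive} that edge distances stay bounded while typical distances in an expander (indeed in any bounded‑degree graph) blow up logarithmically — so there is no genuinely hard step here beyond invoking \theoremref{thm:main snowflake}.
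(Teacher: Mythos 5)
Your proof is correct and follows essentially the same route as the paper: the paper deduces the theorem from \theoremref{thm:main snowflake} via Gromov's argument (citing its reproduction in the literature), which is exactly the ball-growth counting step you spell out explicitly, with the edge lengths staying bounded while the average snowflaked distance grows like $(\log|V_j|)^{1/p}$. The only point worth noting is that one should observe (as the paper does) that the expander hypothesis against a non-singleton target forces each $G_j$ to be connected, so that $d_{V_j}$ is a genuine finite metric before \theoremref{thm:main snowflake} is applied.
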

To deduce Theorem~\ref{thm:no expander} from Theorem~\ref{thm:main snowflake}, use an argument of Gromov~\cite{Gromov-random} (which is reproduced in~\cite[Section~1.1]{MN14}), to deduce that if  $\{G_n=(V_{n},E_{n})\}_{n=1}^\infty$  were a $k$-regular expander with respect to $\Pp(\R^3)$ then, denoting the shortest-path metric that $G_n$ induces on $V_n$ by $d_n$ (the assumption that $G_n$ is an expander with respect to a non-singleton metric space implies that it is a classical expander, hence connected), the metric spaces $\{(V_n,d_n)\}_{n=1}^\infty$ fail to admit a coarse embedding into $\Pp(\R^3)$ with any moduli $\alpha,\beta:[0,\infty)\to [0,\infty)$ as in~\eqref{eq:def coarse} that are independent of $n$. This contradicts the fact that by Theorem~\ref{thm:main snowflake} we know that for every $n\in \N$ the finite  metric space $(V_n,d_n)$ embeds coarsely into $\Pp(\R^3)$ with moduli $\alpha(t)=t^{1/p}$ and, say, $\beta(t)=2t^{1/p}$.

 The above question for Hadamard spaces remains an important open problem which goes back at least to~\cite{Gromov-random,NS11}. See~\cite{MN-duke} for more on this theme, where it is shown that there exists a Hadamard space with respect to which random regular graphs are asymptotically almost surely not expanders.  We also ask whether or not the Alexandrov space of nonnegative curvature $\mathscr{T}$ of Remark~\ref{rem:torus sum} admits a sequence of bounded degree expanders; we speculate that it does.

\subsection{The universality problem for $\mathscr{P}_{\!1}(\R^k)$}\label{sec:bourgain}  A metric space $(Y,d_Y)$ is said to be (finitely) universal if there exists $K\in (0,\infty)$ such that $c_Y(X)\le K$ for {\em every} {\em finite} metric space $(X,d_X)$.

In~\cite{Bou86} Bourgain asked whether $(\mathscr{P}_{\! 1}(\R^2),\W_1)$ is not universal. He actually formulated this question as asking whether a certain Banach space (namely, the dual of the Lipschitz functions on the square $[0,1]^2$), which we denote for the sake of the present discussion by $Z$, has finite Rademacher cotype, but this is equivalent to the above formulation in terms of the  universality of $(\mathscr{P}_{\! 1}(\R^2),\W_1)$. It is not necessary to be familiar with the notion of cotype in order to understand the ensuing discussion, so readers can consider only the above formulation of Bourgain's question. However, for experts we shall now briefly justify this equivalence.  For Banach spaces the property of not being universal is equivalent to having finite Rademacher cotype, as follows from Ribe's theorem~\cite{Rib76} and the Maurey--Pisier theorem~\cite{MP76}. As explained in~\cite{NS07}, every finite subset of $Z$ embeds into $\mathscr{P}_{\! 1}(\R^2)$ with distortion arbitrarily close to $1$, and, conversely, every finite subset of $\mathscr{P}_{\! 1}(\R^2)$ embeds into $Z$ with  distortion arbitrarily close to $1$. Hence $Z$ is universal if and only if $\mathscr{P}_{\! 1}(\R^2)$ is universal. So, $Z$ has finite Rademacher cotype if and only if $\mathscr{P}_{\! 1}(\R^2)$ is not universal.

Bourgain proved in~\cite{Bou86} that $(\mathscr{P}_{\! 1}(\ell_1),\W_1)$ is universal (despite the fact that $\ell_1$ is not universal), but it remains an intriguing open question to determine whether or not $(\mathscr{P}_{\! 1}(\R^k),\W_1)$ is universal for any finite $k\in \N$, the case $k=2$ being most challenging. Here we show that Wasserstein spaces do exhibit some universality phenomenon even when the underlying metric space is a finite dimensional Euclidean space, but we fall short of addressing the universality problem for $\mathscr{P}_{\!1}(\R^k)$. Specifically, Theorem~\ref{thm:main snowflake} asserts that $(\Pp(\R^3),\W_p)$ is universal with respect to $1/p$-snowflakes of metric spaces, and if $p\in (1,2]$ then this cannot be improved to $\alpha$-snowflakes for any $\alpha>1/p$, by Theorem~\eqref{thm:no large snowflake}.   The $1/p$-snowflake of $(X,d_X)$ becomes ``closer" to $(X,d_X)$ itself as $p\to 1$, and at the same time $(\Pp(\R^3),\W_p)$ becomes ``closer" to    $(\mathscr{P}_{\! 1}(\R^3),\W_1)$, but Theorem~\ref{thm:main snowflake} fails to imply the universality of $(\mathscr{P}_{\! 1}(\R^3),\W_1)$ because the embeddings that we construct in Theorem~\ref{thm:main snowflake} degenerate as $p\to 1$.

\begin{remark}
{\em The universality problem for $\mathscr{P}_{\!1}(\R^k)$ belongs to longstanding traditions in functional analysis. As Bourgain explains in~\cite{Bou86}, one motivation for his question is an idea of W.~B.~Johnson to ``linearize" bi-Lipschitz classification problems by examining the geometry of the corresponding Banach spaces of Lipschitz functions defined on the metric spaces in question. For this ``functorial linearization" to succeed, one needs to sufficiently understand the linear structure of the spaces of Lipschitz functions on metric spaces, but unfortunately these are wild spaces that are poorly understood. The  universality problem for $\mathscr{P}_{\!1}(\R^k)$ highlights this situation by asking a basic geometric question (universality) about the dual of the space of Lipschitz functions on $\R^k$.  Despite these difficulties, in recent years the above approach to bi-Lipschitz classification problems has been successfully developed, notably by Godefroy and Kalton~\cite{GK03} who, among other results, deduced from this approach that the Bounded Approximation Property (BAP) is preserved under bi-Lipschitz homeomorphisms of Banach spaces. In addition to being motivated by potential applications, the universality problem for $\mathscr{P}_{\!1}(\R^k)$ relates to old questions on the structure of classical function spaces: here the spaces in question are the Lipschitz functions on $\R^k$, which are closely related to the spaces $C^1(\R^k)$ whose linear structure (in particular its dependence on $k$) remains a major mystery that goes back to Banach's seminal work. Understanding the universality of classical Banach spaces and their duals has attracted many efforts over the past decades, notable examples of which include work~\cite{Pel77,Bou84} on the (non)universality of the dual of the Hardy space $H^\infty(S^1)$, work~\cite{Var76,Pis78,KP80,BM85} on the universality of the span in $C(G)$ of a subset of characters of a compact Abelian group $G$, and work~\cite{Tom74,Pis92,BNR12} on the universality of projective tensor products. Despite these efforts, understanding the universality of $\mathscr{P}_{\!1}(\R^k)$ (equivalently, whether or not the dual of the space of Lipschitz functions on $\R^k$ has finite cotype) remains a remarkably stubborn open problem.
}
\end{remark}

Our proof of Theorem~\ref{thm:main snowflake} relies on the fact that the underlying Euclidean space is (at least) $3$-dimensional, so it remains open whether or not, say, the $1/2$-snowflake of every finite metric space embeds with $O(1)$  distortion into $(\mathscr{P}_{\! 2}(\R^2),\W_2)$. In~\cite{AN09} it is proved that every finite subset of the metric space $(\mathscr{P}_{\!1}(\R^2),\sqrt{\W_1})$, i.e., the $1/2$-snowflake of $(\mathscr{P}_{\! 1}(\R^2),\W_1)$, embeds with $O(1)$ distortion into $(\mathscr{P}_{\! 2}(\R^2),\W_2)$. Thus, if $(\mathscr{P}_{\! 1}(\R^2),\W_1)$ were universal (i.e., if the universality problem for $\mathscr{P}_{\!1}(\R^2)$ had a negative answer) then it would follow  that the $1/2$-snowflake of every finite metric space embeds with $O(1)$ distortion into $(\mathscr{P}_{\! 2}(\R^2),\W_2)$.

\begin{remark}\label{rem:analogy with lp} {\em Another interesting open question is whether or not $\mathscr{P}_{\!1}(\R^3)$ (or $\mathscr{P}_{\!1}(\R^2)$ for that matter) is $1/2$-snowflake universal. There is a perceived analogy between the spaces $\Pp(X)$ and $L_p(\mu)$ spaces, with the spaces $\Pp(X)$ sometimes being referred to as the geometric measure theory analogues of $L_p(\mu)$ spaces. It would be very  interesting to investigate whether or not this analogy could be put on firm footing. As an example of a concrete question along these lines, since $L_2$ is isometric to a subspace of $L_p$, we ask for a characterization of those metric spaces $X$ for which $\mathscr{P}_{\!2}(X)$ admits a bi-Lipschitz embedding into $\Pp(X)$, or, less ambitiously, when does there exist $D(X)\in [1,\infty)$ such that every finite subset of $\mathscr{P}_{\!2}(X)$ embeds into $\Pp(X)$ with distortion $D(X)$. If this were true when $X=\R^3$ or $X=\R^2$ (it is easily seen to be true when $X=\R$) and $p=1$ then it would follow from Theorem~\ref{thm:main snowflake} that $\mathscr{P}_{\!1}(\R^3)$ (respectively $\mathscr{P}_{\!1}(\R^2)$) is $1/2$-snowflake universal. By~\cite{MN08}, this, in turn, would imply that $\mathscr{P}_{\!1}(\R^3)$ (respectively $\mathscr{P}_{\!1}(\R^2)$) fails to admit a coarse, uniform or quasisymmetric embedding into $L_1$, thus strengthening results of~\cite{NS07} via an approach that is entirely different from that of~\cite{NS07}. There are many additional  open questions that follow from the analogy between Wasserstein $p$ spaces and $L_p(\mu)$ spaces, including various questions about the evaluation of the metric type and cotype of $\Pp(X)$; see Question~\ref{Q:p>2} below for more on this interesting research direction.
}
\end{remark}

\subsubsection{Towards the metric cotype dichotomy problem} The following theorem was proved in~\cite{MN08}; see~\cite{Men09,MN11-arxiv,MN13-JEMS} for more information on metric dichotomies of this type.

\begin{theorem}[Metric cotype dichotomy~\cite{MN08}]\label{thm:cotype dichotomy}
Let $(X,d_X)$ be a metric space that isn't universal. There exists $\alpha(X)\in (0,\infty)$ and finite metric spaces $\{(M_n,d_{M_n})\}_{n=1}^\infty$ with $\lim_{n\to \infty} |M_n|=\infty$ and
$$
\forall\, n\in \N,\qquad c_X(M_n)\ge (\log |M_n|)^{\alpha(X)}.
$$
\end{theorem}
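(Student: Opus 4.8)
The plan is to follow the route of~\cite{MN08}, combining the ``nonlinear Maurey--Pisier theorem'' with the distortion lower bounds for discrete tori that the theory of metric cotype provides. Recall first that $(X,d_X)$ is not universal if and only if $\sup_{n\in\N}c_X(\ell_\infty^n)=\infty$: the Fréchet embedding puts every finite $M$ isometrically into $\ell_\infty^{|M|}$, so $c_X(M)\le c_X\big(\ell_\infty^{|M|}\big)$, and hence boundedness of $\{c_X(\ell_\infty^n)\}_n$ is equivalent to universality. Thus non-universality plays the role, within the purely metric category, of a Banach space having trivial Rademacher cotype, and the theorem to be proved is the metric analogue of the Maurey--Pisier assertion that a Banach space of trivial cotype is universal.

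The metric substitute for cotype is as follows. For an even integer $m$ and $n\in\N$, say that $X$ has metric cotype $q$ with constant $\Gamma$ at scale $m$ if every $f\colon\mathbb{Z}_m^n\to X$ satisfies
\begin{equation*}
\sum_{j=1}^n \E_{x}\Big[d_X\big(f(x+\tfrac{m}{2}e_j),f(x)\big)^{q}\Big]\le \Gamma^q m^q\,\E_{\e,x}\Big[d_X\big(f(x+\e),f(x)\big)^{q}\Big],
\end{equation*}
where $x$ is uniform on $\mathbb{Z}_m^n$ and $\e$ is uniform on $\{-1,0,1\}^n$. The key input, and the substantial part of~\cite{MN08}, is a dichotomy: either $X$ is universal, or there are $q=q(X)\in[2,\infty)$, $\Gamma=\Gamma(X)\in(0,\infty)$ and $C_0=C_0(q,\Gamma)\in(0,\infty)$ so that $X$ has metric cotype $q$ with constant $\Gamma$ at every scale $m\ge C_0 n^{1/q}$. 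I would prove this by contraposition: if no such $q,\Gamma$ work then, for each $q$ and $\Gamma$, there are an admissible scale $m$, a dimension $n$, and a map $f\colon\mathbb{Z}_m^n\to X$ grossly violating the displayed inequality; from a sequence of such violating maps one extracts, by an averaging/Ramsey-type argument on combinatorial sub-grids of the torus followed by a subsequential (ultraproduct) limit, a $(1+\e)$-embedding of $\ell_\infty^k$ into $X$ for every $k\in\N$ and every $\e>0$, forcing $X$ to be universal. Making this ``failure of cotype produces $\ell_\infty$'' mechanism rigorous while staying entirely in the metric world---with no linear structure, Gaussian measures, or harmonic analysis on $X$ available---is the crux of the matter and the main obstacle; one must in particular also track the parity of $m$ and the precise admissible window of scales.

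Granting the dichotomy, suppose $X$ is not universal, hence has metric cotype $q=q(X)$ with constant $\Gamma=\Gamma(X)$ at all scales $m\ge C_0 n^{1/q}$. If $f\colon(\mathbb{Z}_m^n,\|\cdot\|_\infty)\to X$ satisfies $s\|x-y\|_\infty\le d_X(f(x),f(y))\le Ds\|x-y\|_\infty$, substitute $f$ into the metric cotype inequality at an admissible scale $m$: since $\|\tfrac m2 e_j\|_\infty=\tfrac m2$ in $\mathbb{Z}_m^n$ the left-hand side is at least $n(sm/2)^q$, while since $\|\e\|_\infty\le1$ the right-hand side is at most $\Gamma^q m^q(Ds)^q$; rearranging yields $D\ge n^{1/q}/(2\Gamma)$, so $c_X\big(\mathbb{Z}_m^n,\|\cdot\|_\infty\big)\ge n^{1/q}/(2\Gamma)$. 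Now let $M_n\eqdef(\mathbb{Z}_{m(n)}^n,\|\cdot\|_\infty)$, where $m(n)$ is the least even integer that is at least $C_0 n^{1/q}$. Then $|M_n|=m(n)^n\to\infty$ and $\log|M_n|=n\log m(n)\asymp_{q,\Gamma}n\log n$, whereas the previous bound gives $c_X(M_n)\ge n^{1/q}/(2\Gamma)$. Since $n^{1/q}/(2\Gamma)\ge(\log|M_n|)^{\alpha}$ for every fixed $\alpha<1/q$ once $n$ is large enough in terms of $q,\Gamma,\alpha$, it suffices to discard the finitely many small indices and reindex, which produces the asserted family with, say, $\alpha(X)=\tfrac1{2q(X)}$. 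This last paragraph is routine once the metric cotype inequality is in hand; essentially all of the difficulty is concentrated in the nonlinear Maurey--Pisier dichotomy described above.
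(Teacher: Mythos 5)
The paper does not prove this theorem at all: it is quoted verbatim from~\cite{MN08}, so there is no in-paper argument to compare against. Your outline is a faithful reconstruction of the route taken in~\cite{MN08}, and the final paragraph --- substituting a hypothetical bi-Lipschitz embedding of $(\mathbb{Z}_m^n,\|\cdot\|_\infty)$ into the metric cotype inequality, bounding the left side below by $n(sm/2)^q$ and the right side above by $\Gamma^qm^q(Ds)^q$ to get $D\gtrsim n^{1/q}/\Gamma$, and then comparing with $\log|M_n|\asymp n\log n$ to extract $\alpha(X)=1/(2q)$ --- is correct and complete. One cosmetic point: for general metric spaces the admissible scale proved in~\cite{MN08} is a worse polynomial than $n^{1/q}$ (the sharp $m\asymp n^{1/q}$ window is a Banach-space phenomenon there), but since any bound $m(n)\le n^{O_q(1)}$ still gives $\log|M_n|\asymp_q n\log n$, this does not affect your conclusion.

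The genuine issue is the one you flag yourself: the entire content of the theorem is the nonlinear Maurey--Pisier dichotomy (non-universal $\Rightarrow$ metric cotype $q<\infty$ with constant $\Gamma<\infty$ at polynomially bounded scales), and your proposal only gestures at its proof (``averaging/Ramsey-type argument on sub-grids followed by an ultraproduct limit''). That sketch names the right ingredients --- the contrapositive, the extraction of nearly isometric copies of $\ell_\infty^k$ from maps that grossly violate the cotype inequality, the need to control the window of admissible $m$ --- but executing it occupies a substantial part of~\cite{MN08} and cannot be treated as routine. As a standalone proof the proposal therefore has a gap exactly where you say it does; as a reading of how the cited theorem is established, it is accurate, and the reduction you do carry out in full is the same one~\cite{MN08} performs.
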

A central question that was left open in~\cite{MN08}, called the {\em metric cotype dichotomy problem}, is whether the exponent $\alpha(X)\in (0,\infty)$ of Theorem~\ref{thm:cotype dichotomy} can be taken to be a universal constant, i.e.,

\begin{question}[Metric cotype dichotomy problem~\cite{MN08}]\label{Q:cotype dichotomy}
Does there exist $\alpha\in (0,1]$ such that {\em every} non-universal metric space $X$ admits a sequence of finite metric spaces $\{(M_n,d_{M_n})\}_{n=1}^\infty$ with $\lim_{n\to \infty} |M_n|=\infty$ that satisfies $c_X(M_n)\ge (\log |M_n|)^\alpha$?
\end{question}
It is even unknown whether or not in Question~\ref{Q:cotype dichotomy} one could take $\alpha=1$ (by Bourgain's embedding theorem~\cite{Bou85}, the best one could hope for here is $\alpha=1$). A positive answer to the following question would resolve the metric cotype dichotomy problem negatively; this question corresponds to asking if Theorem~\ref{thm:no large snowflake}  is sharp when $p\in (1,2]$ and $\alpha=1$ (the same question when $\alpha \in (1/p,1)$ is also open).

\begin{question}\label{Q:dichotomy sharp wasserstein}
Is it true that for $p\in (1,2]$ and $n\in \N$ every $n$-point metric space $(X,d_X)$ satisfies
$$
c_{\Pp(\R^3)}(X)\lesssim_p (\log n)^{1-\frac{1}{p}}?
$$
\end{question}
 A positive answer to Question~\eqref{Q:dichotomy sharp wasserstein} would imply that  $\alpha(\Pp(\R^3))\le 1-1/p$, using the notation of Theorem~\ref{thm:cotype dichotomy}. Taking $p\to 1^+$, it would therefore follow that there is no $\alpha>0$ as in Question~\ref{Q:cotype dichotomy}.

We believe that Question~\ref{Q:dichotomy sharp wasserstein} is an especially intriguing   challenge in embedding theory (for a concrete and natural target space) because a positive answer, in addition to resolving the metric cotype dichotomy problem, would require an interesting new construction, and  a negative answer would require devising a new bi-Lipschitz invariant that would serve as an obstruction for embeddings into Wasserstein spaces.

Focusing for concreteness on the case $p=2$, Question~\ref{Q:dichotomy sharp wasserstein} asks whether $c_{\mathscr{P}_{\!2}(\R^3)}(X)\lesssim \sqrt{\log n}$ for every $n$-point metric space $(X,d_X)$. Note that Theorem~\ref{thm:main snowflake} implies that $(X,d_X)$ embeds into $\mathscr{P}_{\!2}(X)$ with distortion at most the square root of the {\em aspect ratio} of $(X,d_X)$, i.e.,
\begin{equation}\label{eq:aspect}
c_{(\mathscr{P}_{\!2}(\R^3),\W_2)}(X,d_X)\le \sqrt{\frac{\diam(X,d_X)}{\min_{\substack{x,y\in X\\ x\neq y}}d_X(x,y)}},
\end{equation}
but we are asking here for the largest possible growth rate of the distortion of $X$ into $\mathscr{P}_{\!2}(X)$ in terms of the cardinality of $X$. While for certain embedding results there are standard methods (see e.g.~\cite{Bar96,HM06,MN10-max}) for replacing  the dependence  on the aspect ratio of a finite metric space by a dependence on its cardinality, these methods do not seem to apply to our embedding in~\eqref{eq:aspect}. See Section~\ref{sec:dichotomy question remarks} below for further discussion.

\section{Proof of Theorem~\ref{thm:main snowflake}}\label{sec:main proof}

In what follows fix $n\in \N$ and an $n$-point metric space $(X,d_X)$. Write $X=\{x_1,\ldots,x_n\}$ and fix $\phi:\n\times \n\to \{1,\ldots,n^2\}$ to be an arbitrary bijection between $\n\times \n$ and $\{1,\ldots,n^2\}$. Below it will be convenient to use the following notation.
\begin{equation}\label{eq:mM}
m\eqdef \min_{\substack{x,y\in X\\ x\neq y}} d_X(x,y)^{\frac{1}{p}}\qquad\mathrm{and}\qquad M\eqdef\max_{x,y\in X} d_X(x,y)^{\frac{1}{p}}.
\end{equation}

Fix $K\in \N$. Denoting the standard basis of $\R^3$ by $e_1=(1,0,0)$, $e_2=(0,1,0)$, $e_3=(0,0,1)$, for every $i,j\in \n$ with $i<j$ define  five families of points in $\R^3$ by setting for  $s\in \{0,\ldots,K\}$,
\begin{align}
Q^1_s(i,j)&\eqdef \frac{Mi}{m}e_1+\frac{M\phi(i,j)s}{mK}e_2,\label{eq:Q1}\\
Q^2_s(i,j)&\eqdef \frac{Mi}{m}e_1+\frac{M\phi(i,j)}{m}e_2+\frac{Ms}{mK}e_3,\label{eq:Q2}\\
\begin{split}
Q^3_s(i,j)&\eqdef \frac{M(s(j-i)+Ki)+(K-s)d_X(x_i,x_j)^{\frac{1}{p}}}{mK}e_1   +\frac{M\phi(i,j)}{m}e_2+\frac{M}{m}e_3,\label{eq:Q3}
\end{split}
\\
Q^4_s(i,j)&\eqdef \frac{Mj}{m}e_1+\frac{M\phi(i,j)}{m}e_2+\frac{M(K-s)}{mK}e_3,\label{eq:Q4}\\
Q^5_s(i,j)&\eqdef \frac{Mj}{m}e_1+\frac{M(K-s)\phi(i,j)}{mK}e_2\label{eq:Q5}.
\end{align}
Then $Q^1_K(i,j)=Q^2_0(i,j)$, $Q^3_K(i,j)=Q^4_0(i,j)$ and $Q^4_K(i,j)=Q^5_0(i,j)$, so the total number of points thus obtained equals $5(K+1)-3=5K+2$.

Define $\mathscr{B}\subset \R^3$ by setting
\begin{equation}\label{eq:def B}
\mathscr{B}\eqdef \bigcup_{\substack{i,j\in \n\\ i<j}}\mathscr{B}_{ij},
\end{equation}
where for every $i,j\in \n$ with $i<j$ we write
\begin{equation}\label{eq:def Bij}
\mathscr{B}_{ij}\eqdef \bigcup_{s=0}^K\left\{Q^1_s(i,j),Q^2_s(i,j),Q^3_s(i,j),Q^4_s(i,j),Q^5_s(i,j)\right\}.
\end{equation}
Hence $|\mathscr{B}_{ij}|=5K+2$. We also define $\mathscr{C}\subset \R^3$ by
\begin{equation}\label{eq:defC}
\mathscr{C}\eqdef \mathscr{B}\setminus \left\{\frac{Mi}{m}e_1:\  i\in \n\right\}.
\end{equation}
Note that by~\eqref{eq:Q1} we have $(Mi/m)e_1=Q^1_0(i,j)$ if $i,j\in \n$ satisfy $i<j$, and by~\eqref{eq:Q5} we have $(Mi/m)e_1=Q^5_K(\ell,i)$ if $\ell,i\in \n$ satisfy $\ell<i$. Thus $\mathscr{C}$ corresponds to removing from $\mathscr{B}$ those points that lie on the $x$-axis. In what follows, we denote $N=|\mathscr{C}|+1$.
Finally, for every $i\in \n$ we define $\mathscr{C}_i\subset \R^3$ by
\begin{equation}\label{eq:def Ci}
\mathscr{C}_i\eqdef \mathscr{C}\cup  \left\{\frac{Mi}{m}e_1\right\}.
\end{equation}
Hence $|\mathscr{C}_i|=N$.
Our embedding $f:X\to \Pp(\R^3)$ will be given by
\begin{equation}\label{eq:def f}
\forall\, j\in \n,\qquad f(x_j)\eqdef \frac{1}{N}\sum_{u\in \mathscr{C}_j} \delta_u,
\end{equation}
where, as usual, $\delta_u$ is the point mass at $u$.  Thus $f(x_j)$ is the uniform probability measure over $\mathscr{C}_j$. A schematic depiction of the above construction appears in Figure~\ref{fig:4points} below.

\medskip
\begin{figure}[here]
\centering
\fbox{
\begin{minipage}{6.25in}
\centering
\includegraphics[width=6.25in]{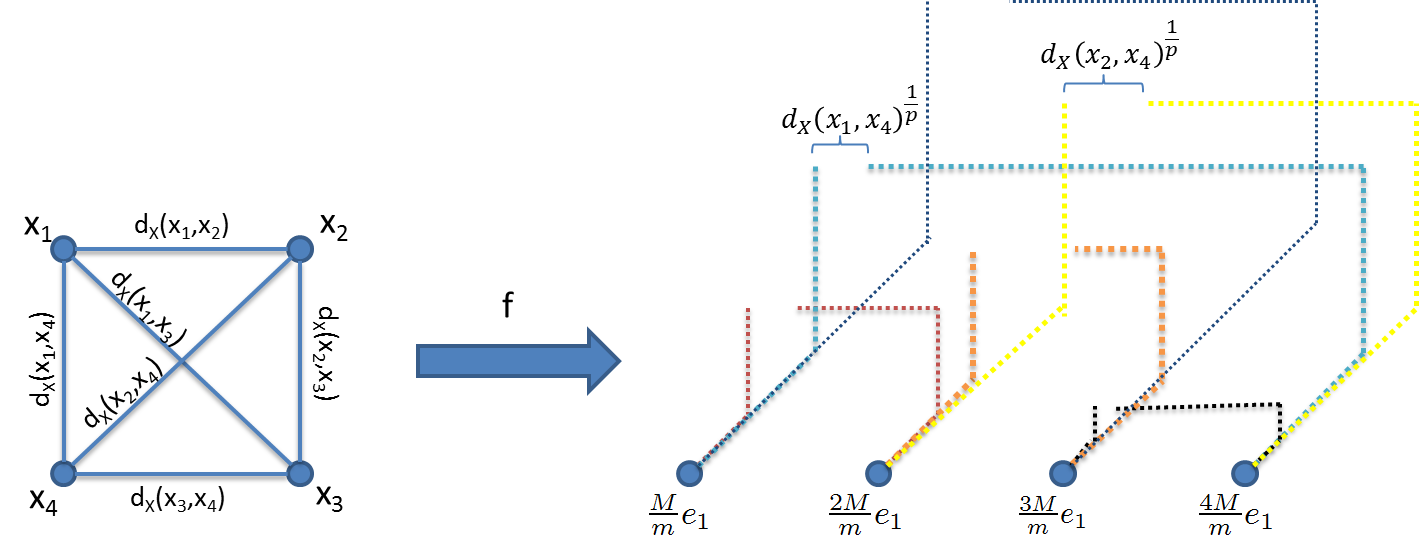}
\smallskip
\caption{A schematic depiction of the embedding $f:X\to \Pp(\R^3)$ for a four-point metric space $(X,d_X)=(\{x_1,x_2,x_3,x_4\},d_X)$. Here the $x$-axis is the horizontal direction, the $z$-axis is the vertical direction and the $y$-axis is perpendicular to the page plane. Recall that $m$ and $M$ are defined in~\eqref{eq:mM}.
}
\label{fig:4points}
\end{minipage}
}
\end{figure}

Lemma~\ref{lem:dist f} below estimates the distortion of $f$, proving Theorem~\ref{thm:main snowflake}.
\begin{lemma}\label{lem:dist f}
Fix $\e\in (0,1)$ and $p\in (1,\infty)$. Let $f:X\to \Pp(\R^3)$ be the mapping appearing in~\eqref{eq:def f}, considered as a mapping from the snowflaked metric space $(X,d_X^{1/p})$ to the metric space $(\Pp(\R^3),\W_p)$. Then, recalling the definitions of $m$ and $M$ in~\eqref{eq:mM}, we have
\begin{equation}\label{eq:K assumption}
K\ge \left(\frac{5M^pn^{2p}}{pm^p\e}\right)^{\frac{1}{p-1}}\implies \dist(f)\le 1+\e.
\end{equation}
\end{lemma}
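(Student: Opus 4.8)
The plan is to compute $\W_p$ distances between the measures $f(x_i)$ and $f(x_j)$ directly by exhibiting a near-optimal coupling and a matching lower bound. The key structural observation is that $f(x_i)$ and $f(x_j)$ differ only in a single point mass: $f(x_i)$ places mass $1/N$ at $(Mi/m)e_1$ while $f(x_j)$ places it at $(Mj/m)e_1$, and they agree everywhere else (on the common set $\mathscr{C}$). Thus the obvious coupling leaves the $N-1$ shared atoms fixed and only has to transport mass $1/N$ from $(Mi/m)e_1$ to $(Mj/m)e_1$. The whole point of the elaborate path construction~\eqref{eq:Q1}--\eqref{eq:Q5} is that the broken-line path $Q^1_\bullet(i,j)\to Q^2_\bullet(i,j)\to Q^3_\bullet(i,j)\to Q^4_\bullet(i,j)\to Q^5_\bullet(i,j)$ of $5K+2$ equally (approximately) spaced points joins $(Mi/m)e_1$ to $(Mj/m)e_1$, with the crucial "$Q^3$" segment having horizontal extent exactly $d_X(x_i,x_j)^{1/p}\cdot(M/(mK))\cdot$ (something) so that the total Euclidean length of the path is $d_X(x_i,x_j)^{1/p}$ up to lower-order error, while all the "detour" segments $Q^1,Q^2,Q^4,Q^5$ have length $\asymp M\phi(i,j)/(mK) + M/(mK)$, i.e. $O(n^2 M/(mK))$, which is negligible once $K$ is large.

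First I would record the elementary fact that for a measure of the form $\tfrac1N\sum_{u\in S}\delta_u$, moving one atom along a chain of $N$ points costs, in the $\W_p^p$ sense, roughly $\tfrac1N$ times the sum of $p$-th powers of consecutive gaps; more precisely, since along our path each consecutive gap is at most $\asymp M n^2/(mK)$, shifting mass $1/N$ across the whole chain contributes at most $\tfrac1N\cdot N\cdot (\text{gap})^p\lesssim (Mn^2/(mK))^p$ after summing, which is the source of the additive error. Then for the \emph{upper bound} on $\W_p(f(x_i),f(x_j))$: couple the $N-1$ common atoms to themselves (cost $0$) and couple the remaining mass by sliding it one step at a time along the path inside $\mathscr{C}_i\cup\mathscr{C}_j$; this gives $\W_p(f(x_i),f(x_j))^p \le (\text{total path length in the relevant coordinate})^p/N + (\text{error})$, and a short computation shows the leading term is $d_X(x_i,x_j)/N$ times — wait, more carefully: the natural normalization makes $\W_p(f(x_i),f(x_j)) \le d_X(x_i,x_j)^{1/p}\cdot N^{-1/p}\cdot(1+o(1))$, so after the global rescaling $s = N^{-1/p}$ implicit in the distortion definition we get $\W_p(f(x_i),f(x_j)) \le s\, d_X(x_i,x_j)^{1/p}(1+\e)$ once $K$ satisfies~\eqref{eq:K assumption}. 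The quantitative bookkeeping here — showing the accumulated error from the $O(n^2)$ detour points and the discretization of the path is below $\e$ exactly when $K\ge (5M^p n^{2p}/(pm^p\e))^{1/(p-1)}$ — is the routine but delicate part; it uses the convexity inequality $(a+b)^p \le a^p + pb(a+b)^{p-1}$ or $(1+t)^p\le 1+pt+O(t^2)$ to linearize, which is where the factor $p$ and the exponent $1/(p-1)$ enter.

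For the \emph{lower bound}, I would project onto the $e_1$-axis: the pushforward of $f(x_k)$ under the first-coordinate map is a fixed measure (supported on $\mathscr{C}$'s projection) plus $\tfrac1N\delta_{Mk/m}$. Since $\W_p$ does not increase under $1$-Lipschitz maps, $\W_p(f(x_i),f(x_j)) \ge \W_p$ of these one-dimensional projections, and for measures on $\R$ that agree except for one atom of mass $1/N$ at two different locations, the $\W_p$ distance is at least — here one must be a little careful because the ambient atoms can "help" — bounded below using the fact that the nearest atoms of $\mathscr{C}$ along the $e_1$-axis between positions $Mi/m$ and $Mj/m$ are precisely the projections of the $Q^3$-points, whose spacing is $d_X(x_i,x_j)^{1/p}$-controlled; the cheapest way to fill the "hole" at $Mi/m$ and create a "hole" at $Mj/m$ is to cascade mass along these atoms, and summing the $p$-th powers of the gaps gives a lower bound of $d_X(x_i,x_j)/N$ minus the same kind of error term. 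So $\W_p(f(x_i),f(x_j)) \ge s\,d_X(x_i,x_j)^{1/p}(1-\e)$.

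\textbf{Main obstacle.} The genuinely nontrivial step is the lower bound: unlike the upper bound, where we simply exhibit a coupling, here we must argue that \emph{no} coupling does substantially better, and the presence of all the auxiliary atoms in $\mathscr{C}$ (which are shared between $f(x_i)$ and $f(x_j)$) a priori gives the transport plan room to cheat by rerouting mass through them. The resolution is the one-dimensional projection trick combined with the observation that the only atoms of $\mathscr{C}$ lying (in the $e_1$-coordinate) strictly between $Mi/m$ and $Mj/m$ are the $Q^3_s(i,j)$ for the \emph{particular} pair $(i,j)$ — this is exactly why the construction separates the different pairs $(i,j)$ into disjoint "lanes" indexed by the distinct values $\phi(i,j)$ in the $e_2$-direction, so that after projecting to $e_1$ the relevant atoms are controlled. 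One then needs a clean lower bound for $\W_p^p$ of two measures on the line differing in the placement of a single atom, in terms of the gap structure of the support; this is standard (optimal transport on $\R$ is given by the monotone rearrangement / CDF formula) but must be applied carefully to extract the $d_X(x_i,x_j)$ leading term with the right error. Everything else — verifying the path endpoints match up as claimed after~\eqref{eq:def Bij}, counting $5K+2$ points, bounding detour lengths by $O(n^2M/(mK))$, and plugging into the convexity estimate to extract the explicit threshold on $K$ — is mechanical.
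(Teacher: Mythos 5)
Your upper bound is essentially the paper's argument: the cascade coupling that shifts each atom of $\mathscr{B}_{ij}$ one step along the path, so that the cost is $\frac1N\sum(\text{step length})^p$, with the single jump $Q^2_K(i,j)\to Q^3_0(i,j)$, of length exactly $d_X(x_i,x_j)^{1/p}/m$, supplying the leading term $d_X(x_i,x_j)/(m^pN)$ and the remaining $5K$ steps of length $O(Mn^2/(mK))$ supplying the error $5K(Mn^2/(mK))^p/N$. Be careful, though: it is not the ``total path length'' that matters --- that is of order $Mn^2/m$, vastly larger than $d_X(x_i,x_j)^{1/p}$ --- but the sum of $p$-th powers of the individual steps, of which all but one are negligible. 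Your write-up wavers on this point, but the correct statement is recoverable from what you wrote.

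The lower bound has a genuine gap. You correctly identify the danger (a coupling could reroute mass through the shared atoms of $\mathscr{C}$), but the proposed fix --- projecting onto the $e_1$-axis --- destroys exactly the separation that makes the construction work. Your claim that the only atoms of $\mathscr{C}$ whose first coordinate lies strictly between $Mi/m$ and $Mj/m$ are the $Q^3_s(i,j)$ is false: for any pair $(c,d)$ with, say, $c\le i<j\le d$ and $(c,d)\neq(i,j)$, the points $Q^3_s(c,d)$ project onto a progression sweeping from $Mc/m+d_X(x_c,x_d)^{1/p}/m$ to $Md/m$ with consecutive spacing $\bigl(M(d-c)-d_X(x_c,x_d)^{1/p}\bigr)/(mK)=O(Mn/(mK))$, hence covering essentially all of $[Mi/m,Mj/m]$ densely once $K$ is large; in addition, the $Q^1,Q^2,Q^4,Q^5$ atoms attached to every intermediate index $k$ with $i<k<j$ project onto $Mk/m$. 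Consequently the one-dimensional monotone-rearrangement cost after projection is $\frac1N\sum(\text{gap})^p\lesssim\frac1N\cdot K\cdot(Mn/(mK))^p\to0$ as $K\to\infty$, which is useless. The $e_2$-separation into ``lanes'' that you invoke is precisely what the projection discards, and no single coordinate suffices: in the paper's case analysis the coordinate witnessing the separation is sometimes $e_1$, sometimes $e_2$, sometimes $e_3$. The paper instead partitions $\mathscr{B}$ into sets $\mathscr{S}_1,\dots,\mathscr{S}_n$ of points ``belonging to'' each $x_a$, proves the fully three-dimensional separation estimate $\|u-v\|_2^p\ge d_X(x_a,x_b)/m^p$ for $u\in\mathscr{S}_a$ and $v\in\mathscr{S}_b$, reduces an arbitrary coupling to a chain from $(Mi/m)e_1$ to $(Mj/m)e_1$ via Birkhoff--von Neumann and cycle decomposition, and then sums the per-hop lower bounds using the triangle inequality of $d_X$, namely $\sum_\ell d_X(x_{a(\ell-1)},x_{a(\ell)})\ge d_X(x_i,x_j)$. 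Your proposal never invokes the triangle inequality of $d_X$, yet it is indispensable: a coupling may cascade through atoms associated with arbitrary other points of $X$, and only the triangle inequality rules out a cheaper route.
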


\begin{proof}
We shall show that under the assumption on $K$ that appears in~\eqref{eq:K assumption} we have
\begin{equation}\label{eq:1+plus eps}
\forall\, i,j\in \n,\qquad \left(\frac{d_X(x_i,x_j)}{m^pN}\right)^{\frac{1}{p}} \le \W_p(f(x_i),f(x_j))\le (1+\e)\left(\frac{d_X(x_i,x_j)}{m^pN}\right)^{\frac{1}{p}},
\end{equation}
where we recall that we defined  $N$ to be equal to $|\mathscr{C}|+1$ for $\mathscr{C}$ given in~\eqref{eq:defC}. Clearly~\eqref{eq:1+plus eps} implies that $\dist(f)\le 1+\e$, as required.

To prove the right hand inequality in~\eqref{eq:1+plus eps}, suppose that $i,j\in \n$ satisfy $i<j$ and consider the coupling $\pi\in \Pi(f(x_i),f(x_j))$ given by
\begin{equation}\label{eq:first coupling formula}
\pi\eqdef \frac{1}{N}\bigg(\sum_{t=1}^5\sum_{s=0}^{K-1} \delta_{\left(Q^t_s(i,j),Q^t_{s+1}(i,j)\right)}+\delta_{\left(Q^2_K(i,j),Q^3_{0}(i,j)\right)}+\sum_{u\in\mathscr{C}\setminus \mathscr{B}_{ij}}\delta_{(u,u)}\bigg),
\end{equation}
where for~\eqref{eq:first coupling formula} recall~\eqref{eq:def Bij} and~\eqref{eq:defC}.  The meaning of~\eqref{eq:first coupling formula} is simple: the supports of $f(x_i)$ and $f(x_j)$ equal $\mathscr{C}_i$ and $\mathscr{C}_j$, respectively, where we recall~\eqref{eq:def Ci}. Note that $\mathscr{C}_i\setminus \mathscr{C}_j=\{Q^1_0(i,j)\}$  and  $\mathscr{C}_j\setminus \mathscr{C}_i=\{Q^5_K(i,j)\}$, where we recall~\eqref{eq:Q1} and~\eqref{eq:Q5}. So, the coupling $\pi$ in~\eqref{eq:first coupling formula} corresponds to shifting the points in $\mathscr{B}_{ij}$ from the support of $f(x_i)$ to the support of $f(x_j)$ while keeping the points in $\mathscr{C}\setminus \mathscr{B}_{ij}$ unchanged.

Now, recalling the definitions~\eqref{eq:Q1}, \eqref{eq:Q2}, \eqref{eq:Q3}, \eqref{eq:Q4} and~\eqref{eq:Q5},
\begin{multline}\label{eq:break into sum}
\W_p(f(x_i),f(x_j))^p\le \iint_{\R^3\times \R^3} \|x-y\|_2^p\d\pi(x,y) \\= \frac{1}{N}\sum_{t=1}^5\sum_{s=0}^{K-1} \left\|Q^t_s(i,j)-Q^t_{s+1}(i,j)\right\|_2^p+\frac{\|Q^2_K(i,j)-Q^3_{0}(i,j)\|_2^p}{N}.
\end{multline}
Note that if $s\in \{0,\ldots,K-1\}$ then by~\eqref{eq:Q1}, \eqref{eq:Q2}, \eqref{eq:Q4}, \eqref{eq:Q5} we have
\begin{align}\label{eq:first type summand}
\begin{split}
t\in \{1,5\}&\implies \left\|Q^t_s(i,j)-Q^t_{s+1}(i,j)\right\|_2=\frac{M\phi(i,j)}{mK}\le \frac{Mn^2}{mK},\\
t\in \{2,4\}&\implies \left\|Q^t_s(i,j)-Q^t_{s+1}(i,j)\right\|_2=\frac{M}{mK}.
\end{split}
\end{align}
Also, by~\eqref{eq:Q2} and~\eqref{eq:Q3} we have
\begin{equation}\label{eq:second type summand}
\left\|Q^2_K(i,j)-Q^3_{0}(i,j)\right\|_2=\frac{d_X(x_i,x_j)^{\frac{1}{p}}}{m}.
\end{equation}
Finally, by~\eqref{eq:Q3} for every $s\in \{0,\ldots,K-1\}$ we have
\begin{equation}\label{eq:third type summand}
\left\|Q^3_s(i,j)-Q^3_{s+1}(i,j)\right\|_2=\frac{M(j-i)}{mK}-\frac{d_X(x_i,x_j)^{\frac{1}{p}}}{mK}\le \frac{Mn}{mK},
\end{equation}
where we used the fact that $M(j-i)-d_X(x_i,x_j)^{1/p}\ge 0$, which holds true by the definition of $M$ in~\eqref{eq:mM} because $j-i\ge 1$. A substitution of~\eqref{eq:first type summand}, \eqref{eq:second type summand} and~\eqref{eq:third type summand} into~\eqref{eq:break into sum} yields the estimate
\begin{multline*}
\W_p(f(x_i),f(x_j))^p\le \frac{d_X(x_i,x_j)}{m^pN}+\frac{5K}{N}\left(\frac{Mn^2}{mK}\right)^p\\=\left(1+\frac{5M^pn^{2p}}{K^{p-1}d_X(x_i,x_j)}\right)\frac{d_X(x_i,x_j)}{m^pN}
\le (1+p\e)\frac{d_X(x_i,x_j)}{m^pN},
\end{multline*}
where we used the fact that by the definition of $m$ in~\eqref{eq:mM} we have $m^p\le d_X(x_i,x_j)$, and the lower bound on $K$ that is assumed in~\eqref{eq:K assumption}.  This implies the right hand inequality in~\eqref{eq:1+plus eps} because $1+p\e\le (1+\e)^p$.

Passing now to the proof of the left hand inequality in~\eqref{eq:1+plus eps}, we need to prove that for every $i,j\in \n$ with $i<j$ we have
\begin{equation}\label{eq:lower pi}
\forall\, \pi\in \Pi(f(x_i),f(x_j)),\qquad \iint_{\R^3\times \R^3} \|x-y\|_2^p\d\pi(x,y)\ge \frac{d_X(x_i,x_j)}{m^pN}.
\end{equation}
Note that we still did not use the triangle inequality for $d_X$, but this will be used in the proof of~\eqref{eq:lower pi}. Also, the reason why we are dealing with $\Pp(\R^3)$ rather than $\Pp(\R^2)$ will become clear in the ensuing argument.

Recall that the measures $f(x_i)$ and $f(x_j)$ are uniformly distributed over sets of the same size, and their supports $\mathscr{C}_i$ and $\mathscr{C}_j$ (respectively) satisfy $\mathscr{C}_i\triangle \mathscr{C}_j=\{(Mi/m)e_1,(Mj/m)e_1\}$. Since the set of all doubly stochastic matrices is the convex hull of the permutation matrices, and every permutation is a product of disjoint cycles, it follows that it suffices to establish the validity of~\eqref{eq:lower pi} when $\pi=\frac{1}{N}\sum_{\ell=1}^{L}\delta_{\left(u_{\ell-1},u_\ell\right)}$ for some $L\in \n$ and $u_1,\ldots u_{L-1}\in \mathscr{C}$, where we set $u_0=(Mi/m)e_1$ and $u_{L}=(Mj/m)e_1$. With this notation, our goal is to show that
\begin{equation}\label{eq:goal with sum}
\frac{1}{N} \sum_{\ell=1}^L \|u_\ell-u_{\ell-1}\|_2^p\ge \frac{d_X(x_i,x_j)}{m^pN}.
\end{equation}

For every $a\in \n$ define $\mathscr{S}_a\subset \R^3$ by $\mathscr{S}_a\eqdef \mathscr{S}_a^1\cup \mathscr{S}_a^2$, where
\begin{equation}\label{eq:S1}
\mathscr{S}_a^1\eqdef \bigcup_{b=a+1}^n\bigcup_{s=0}^K\left\{Q_s^1(a,b),Q_s^2(a,b)\right\},
\end{equation}
and
\begin{equation}\label{eq:S2}
\mathscr{S}_a^2\eqdef  \bigcup_{c=1}^{a-1}\bigcup_{s=0}^K\left\{Q_s^3(c,a),Q_s^4(c,a),Q_s^5(c,a)\right\}.
\end{equation}
Thus, recalling~\eqref{eq:def B}, the sets $\mathscr{S}_1,\ldots,\mathscr{S}_n$ form a partition of $\mathscr{B}$ and $a\in \mathscr{S}_a$ for every $a\in \n$. For every $\ell\in \{0,\ldots,L\}$ let $a(\ell)$ be the unique element of $\n$ for which $u_{\ell}\in \mathscr{S}_{a(\ell)}$. Then $a(0)=i$ and $a(L)=j$. The left hand side of~\eqref{eq:goal with sum} can be bounded from below as follows
\begin{equation}\label{eq:minimum distance}
\frac{1}{N} \sum_{\ell=1}^L \|u_\ell-u_{\ell-1}\|_2^p\ge \frac{1}{N} \sum_{\ell=1}^L \min_{\substack{u\in \mathscr{S}_{a(\ell-1)}\\ v\in \mathscr{S}_{a(\ell)}}}\|u-v\|_2^p.
\end{equation}

We shall show that
\begin{equation}\label{eq:parts are distant}
\forall\, a,b\in \n,\ \forall (u,v)\in \mathscr{S}_a\times \mathscr{S}_b,\qquad \|u-v\|_2^p\ge \frac{d_X(x_a,x_b)}{m^p}.
\end{equation}
The validity of~\eqref{eq:parts are distant} implies the required estimate~\eqref{eq:goal with sum} because, by~\eqref{eq:minimum distance}, it follows from~\eqref{eq:parts are distant} and the triangle inequality for $d_X$ that
$$
\frac{1}{N} \sum_{\ell=1}^L \|u_\ell-u_{\ell-1}\|_2^p\ge \frac{1}{N}\sum_{\ell=1}^L \frac{d_X\!\left(x_{a(\ell-1)},x_{a(\ell)}\right)}{m^p}\ge \frac{d_X\!\left(x_{i},x_{j}\right)}{m^pN}.
$$

It remains to justify~\eqref{eq:parts are distant}. Suppose that $a,b\in \n$ satisfy $a<b$ and $(u,v)\in \mathscr{S}_a\times \mathscr{S}_b$. Write $u=Q_s^t(c,d)$ and $v=Q_\sigma^\tau(\gamma,\delta)$  for some $s,\sigma\in \{0,\ldots,K\}$, $t,\tau\in \{1,\ldots,5\}$ and $c,d,\gamma,\delta\in \n$.

We shall check below, via a direct case analysis, that the absolute value of one of the three coordinates of $u-v$ is either at least $M/m$ or at least $d_X(x_a,x_b)^{1/p}/m$. Since by the definition of $M$ in~\eqref{eq:mM} we have $M\ge d_X(x_a,x_b)^{1/p}$, this assertion will imply~\eqref{eq:parts are distant}.

Suppose first that $t,\tau\in \{1,2,4,5\}$. By comparing~\eqref{eq:S1}, \eqref{eq:S2} with~\eqref{eq:Q1}, \eqref{eq:Q2}, \eqref{eq:Q3}, \eqref{eq:Q4} we see that $\langle u,e_1\rangle =Ma/m$ and $\langle v,e_1\rangle =Mb/m$. Since $b-a\ge 1$, this implies that $\langle u-v,e_1\rangle\ge M/m$, as required.

If $t=\tau=3$ then by~\eqref{eq:S2} we necessarily have $d=a$ and $\delta=b$. Hence $(c,d)\neq (\gamma,\delta)$ and therefore $|\phi(c,d)-\phi(\gamma,\delta)|\ge 1$, since $\phi$ is a bijection between $\n\times \n$ and $\{1,\ldots,n^2\}$. By~\eqref{eq:Q3} we therefore have $|\langle u-v,e_2\rangle|\ge M/m$, as required.

It remains to treat the case $t\neq \tau$ and $3\in \{t,\tau\}$. If $\{t,\tau\}\subset \{1,3,5\}$ then by contrasting~\eqref{eq:Q3} with~\eqref{eq:Q1} and~\eqref{eq:Q5} we see that the third coordinate of one of the vectors $u,v$ vanishes while the third coordinate of the other vector equals $M/m$. Therefore $|\langle u-v,e_3\rangle|\ge M/m$, as required. The only remaining case is $\{t,\tau\}\subset \{2,3,4\}$. In this case $|\langle u-v,e_2\rangle|=M|\phi(c,d)-\phi(\gamma,\delta)|/m$, by~\eqref{eq:Q3}, \eqref{eq:Q2}, \eqref{eq:Q4}. So, if $(c,d)\neq(\gamma,\delta)$ then $|\phi(c,d)-\phi(\gamma,\delta)|\ge 1$, and we are done. We may therefore assume that $c=\gamma$ and $d=\delta$. Observe that by~\eqref{eq:S2} if $\{t,\tau\}=\{3,4\}$ then $\{d,\delta\}=\{a,b\}$, which contradicts $d=\delta$. So, we also necessarily have $\{t,\tau\}=\{2,3\}$, in which case, since $a<b$, by~\eqref{eq:S1} and~\eqref{eq:S2} we see that $c=\gamma=a$ and $d=\delta=b$.  By interchanging the labels $s$ and $\sigma$ if necessary, we may assume that $u=Q^2_\sigma(a,b)$ and $v=Q^3_s(a,b)$. By~\eqref{eq:Q2} and~\eqref{eq:Q3} we therefore have
\begin{multline*}
\langle v-u,e_1\rangle= \frac{M(s(b-a)+Ka)}{mK}+\frac{(K-s)d_X(x_a,x_b)^{\frac{1}{p}}}{mK}-\frac{Ma}{m}\\
=\frac{d_X(x_a,x_b)^{\frac{1}{p}}}{m}+\frac{sM(b-a)-sd_X(x_a,x_b)^{\frac{1}{p}}}{mK}\ge \frac{d_X(x_a,x_b)^{\frac{1}{p}}}{m},
\end{multline*}
where we used the fact that by~\eqref{eq:mM} we have $M\ge d_X(x_a,x_b)^{1/p}$, and that $b-a\ge 1$. This concludes the verification of the remaining case of~\eqref{eq:parts are distant}, and hence the proof of Lemma~\ref{lem:dist f} is complete.
\end{proof}

\section{Sharpness of Theorem~\ref{thm:main snowflake}}\label{sec:sharpness}

The results of this section rely crucially on K. Ball's notion~\cite{Bal92} of Markov type. We shall start by briefly recalling the relevant background on this important invariant of metric spaces, including variants and notation from~\cite{Nao14} that will be used below. Let $\{Z_t\}_{t=0}^\infty$ be a Markov chain on the state space $\n$ with transition probabilities $a_{ij}=\Pr\left[Z_{t+1}=j|Z_t=i\right]$ for every $i,j\in \n$. $\{Z_t\}_{t=0}^\infty$ is said to be stationary if $\pi_i=\Pr\left[Z_t=i\right]$ does not depend on $t\in \n$ and it is said to be  reversible if $\pi_i a_{ij}=\pi_j a_{ji}$ for every $i,j\in \n$.

Let $\{Z_t'\}_{t=0}^\infty$ be the Markov chain that starts at $Z_0$ and then evolves independently of $\{Z_t\}_{t=0}^\infty$ with the same transition probabilities. Thus $Z_0'=Z_0$ and conditioned on $Z_0$ the random variables $Z_t$ and $Z_t'$ are independent and identically distributed. We note for future use that if $\{Z_t\}_{t=0}^\infty$ as above is stationary and reversible then for every symmetric function $\psi:\n\times\n\to \R$ and every $t\in \N$ we have
\begin{equation}\label{eq:psi identity}
\E\big[\psi(Z_t,Z_t')\big]=\E\big[\psi(Z_{2t},Z_0)\big].
\end{equation}
This is a consequence of the observation that, by stationarity and revesibility, conditioned on the random variable $Z_t$ the random variables $Z_0$ and $Z_{2t}$ are independent and identically distributed. Denoting $A=(a_{ij})\in M_n(\R)$, the validity of~\eqref{eq:psi identity} can be alternatively checked directly as follows.
\begin{multline}\label{eq: check psi}
\E\left[\psi(Z_t,Z_t')\right]=\E\big[\E\left[\psi(Z_t,Z_t')|Z_0\right]\big]=\sum_{i=1}^n \sum_{j=1}^n\sum_{k=1}^n\pi_i A^t_{ij}A^t_{ik} \psi(j,k)\\\stackrel{(\star)}{=}
\sum_{j=1}^n\sum_{k=1}^n\pi_j \bigg(\sum_{i=1}^n A^t_{ji}A^t_{ik}\bigg) \psi(j,k)=\sum_{j=1}^n\sum_{k=1}^n\pi_j A^{2t}_{jk} \psi(j,k),
\end{multline}
where $(\star)$ uses the reversibility of the Markov chain $\{Z_t\}_{t=0}^\infty$  through the validity of $\pi_iA^t_{ij}=\pi_jA^t_{ji}$ for every $i,j\in \n$. The final term in~\eqref{eq: check psi} equals the right hand side of~\eqref{eq:psi identity}, as required.

Given $p\in [1,\infty)$, a metric space $(X,d_X)$ and $m\in \N$, the Markov type $p$ constant of $(X,d_X)$ at time $m$, denoted $M_p(X,d_X;m)$ (or simply $M_p(X;m)$ if the metric is clear from the context) is defined to be the infimum over those $M\in (0,\infty)$ such that for every $n\in \N$, every stationary reversible Markov chain $\{Z_t\}_{t=0}^\infty$ with state space $\n$, and every $f:\n\to X$ we have
$$
\E\big[d_X(f(Z_m),f(Z_0))^p\big]\le M^pm\E\big[d_X(f(Z_1),f(Z_0))^p\big].
$$
Observe that by the triangle inequality we always have $$M_p(X;m)\le m^{1-\frac{1}{p}}.$$ As we shall explain below, any estimate of the form $M_p(X;m)\lesssim_X m^{\theta}$ for $\theta<1-1/p$ is a nontrivial obstruction to the embeddability of certain metric spaces into $X$, but it is especially important (e.g. for Lipschitz extension theory~\cite{Bal92}) to single out the case when $M_p(X;m)\lesssim_X 1$. Specifically, $(X,d_X)$ is said to have Markov type $p$ if
$$
M_p(X,d_X)\eqdef \sup_{m\in \N} M_p(X,d_X;m)<\infty.
$$
$M_p(X,d_X)$ is called the Markov type $p$ constant of $(X,d_X)$, and it is often denoted simply $M_p(X)$ if the metric is clear from the context.

The Markov type of many important classes of metric spaces is satisfactorily understood, though some fundamental questions remain open; see Section~4 of the survey~\cite{Nao12} and the references therein, as well as more recent progress in e.g.~\cite{DLP13}. Here we study this notion in the context of Wasserstein spaces. The link of Markov type to the nonembeddability of snowflakes is simple, originating in an idea of~\cite{LMN02}. This is the content of the following lemma.

\begin{lemma}\label{lem:markov type no snowflake}
Fix a metric space $(Y,d_Y)$, $m\in \N$, $K,p\in [1,\infty)$ and $\theta\in [0,1]$. Suppose that
\begin{equation}\label{eq:Mp theta assumption}
M_p(Y;m)\le Km^{\frac{\theta(p-1)}{p}}.
\end{equation}
Denote $n=2^{4m}$. Then there exists an $n$-point metric space $(X,d_X)$ such that
$$
\alpha\in \left[\frac{1+\theta(p-1)}{p}, 1\right] \implies c_Y(X,d_X^\alpha)\gtrsim \frac{1}{K}(\log n)^{\alpha-\frac{1+\theta(p-1)}{p}}.
$$
\end{lemma}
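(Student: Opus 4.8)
The plan is to take for $(X,d_X)$ the discrete hypercube $\{0,1\}^{4m}$ equipped with the Hamming metric $\rho$, so that $|X|=2^{4m}=n$ and $\log n\asymp m$, and to obtain the lower bound on $c_Y(X,\rho^\alpha)$ by running the standard random walk on $\{0,1\}^{4m}$ through the defining inequality of $M_p(Y;m)$; this is the strategy introduced in~\cite{LMN02}. Concretely, let $\{Z_t\}_{t=0}^\infty$ be the Markov chain on $\{0,1\}^{4m}$ with $Z_0$ distributed uniformly that at each step adds a uniformly random element of the standard basis modulo $2$. Its transition matrix is symmetric, so the chain is stationary and reversible with respect to the uniform measure, and each step changes exactly one coordinate; after a harmless relabeling of the state space it is of the form allowed in the definition of Markov type.

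Next, fix any $D>c_Y(X,\rho^\alpha)$ and a map $g:X\to Y$ of distortion at most $D$; after rescaling we may assume $\rho(x,y)^\alpha\le d_Y(g(x),g(y))\le D\,\rho(x,y)^\alpha$ for all $x,y\in X$. Applying the definition of $M_p(Y;m)$ to $\{Z_t\}$ and $g$ gives
$$\E\big[d_Y(g(Z_m),g(Z_0))^p\big]\le M_p(Y;m)^p\,m\,\E\big[d_Y(g(Z_1),g(Z_0))^p\big].$$
On the right-hand side, $\rho(Z_1,Z_0)\equiv 1$, so $\E[d_Y(g(Z_1),g(Z_0))^p]\le D^p$. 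On the left-hand side, $d_Y(g(Z_m),g(Z_0))^p\ge\rho(Z_m,Z_0)^{\alpha p}$, and since $\theta\ge 0$ and $p\ge 1$ the hypothesis $\alpha\ge(1+\theta(p-1))/p$ forces $\alpha p\ge 1+\theta(p-1)\ge 1$, so Jensen's inequality gives $\E[\rho(Z_m,Z_0)^{\alpha p}]\ge(\E[\rho(Z_m,Z_0)])^{\alpha p}$. Finally, writing $\rho(Z_m,Z_0)$ as the number of the $4m$ coordinates flipped an odd number of times among $m$ independent uniform choices, linearity of expectation yields $\E[\rho(Z_m,Z_0)]=2m\big(1-(1-\tfrac1{2m})^m\big)\ge 2m\big(1-e^{-1/2}\big)\gtrsim m$, using $1-x\le e^{-x}$.

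Combining these estimates with the assumption $M_p(Y;m)\le Km^{\theta(p-1)/p}$ gives $m^{\alpha p}\lesssim K^p m^{\theta(p-1)+1}D^p$, that is, $D\gtrsim K^{-1}m^{\alpha-(1+\theta(p-1))/p}$; since $D>c_Y(X,\rho^\alpha)$ was arbitrary and $m\asymp\log n$, this is exactly the claimed bound. The argument has no substantive obstacle: the only computation is the elementary odd-parity expectation above, and the only points needing care are that the exponent $\alpha-(1+\theta(p-1))/p$ is nonnegative precisely on the stated range of $\alpha$ (so the conclusion is non-vacuous) and that on this range $\alpha p\ge 1$, which is what makes the application of Jensen's inequality go in the useful direction.
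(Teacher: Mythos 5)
Your proof is correct and follows essentially the same route as the paper: the $4m$-dimensional Hamming cube, the standard random walk run for $m$ steps through the definition of $M_p(Y;m)$, and the resulting comparison of $m^{\alpha p}$ with $K^p m^{1+\theta(p-1)}D^p$. The only (harmless) difference is that where the paper cites~\cite{NS02} for the bound $\E\big[\|Z_m-Z_0\|_1^{\alpha p}\big]\gtrsim m^{\alpha p}$, you derive it directly from Jensen's inequality (valid since $\alpha p\ge 1$ on the stated range of $\alpha$) together with the elementary odd-parity computation of $\E\big[\|Z_m-Z_0\|_1\big]$, which makes that step self-contained.
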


\begin{proof}
Take $(X,d_X)=(\{0,1\}^{4m},\|\cdot\|_1)$, i.e., $X$ is the $4m$-dimensional discrete hypercube, equipped with the Hamming metric. Thus $|X|=n$. Let $\{Z_t\}_{t=0}^\infty$ be the standard random walk on $X$, with $Z_0$ distributed uniformly over $X$. Suppose that $f:X\to Y$ satisfies
\begin{equation}\label{eq:sD for lower}
\forall\, x,y\in X,\qquad s\|x-y\|_1^\alpha\le d_Y(f(x),f(y))\le Ds\|x-y\|_1^\alpha
\end{equation}
for some $s,D\in (0,\infty)$. Our goal is to bound $D$ from below. By the definition of $M_p(Y;m)$,
\begin{equation}\label{eq:use markov type}
\E\big[d_Y(f(Z_m),f(Z_0))^p\big]\stackrel{\eqref{eq:Mp theta assumption}}{\le} K^p m^{1+\theta(p-1)}  \E\big[d_Y(f(Z_1),f(Z_0))^p\big].
\end{equation}
By the right hand inequality in~\eqref{eq:sD for lower} we have
\begin{equation}\label{eq:use upper lip}
\E\big[d_Y(f(Z_1),f(Z_0))^p\big]\\
\le D^ps^p\E\big[\|Z_1-Z_0\|_1^{\alpha p}\big]= D^p s^p.
\end{equation}
At the same time, it is simple to see (and explained explicitly in e.g.~\cite{NS02} or~\cite[Section~9.4]{Nao12}) that $\E\big[\|Z_m-Z_0\|_1^{\alpha p} \big]\ge (\eta m)^{\alpha p}$ for some universal constant $\eta\in (0,1)$. Hence,
\begin{equation}\label{eq:use lower lip}
\E\big[d_Y(f(Z_m),f(Z_0))^p\big]\stackrel{\eqref{eq:sD for lower}}{\ge} s^p \E\big[\|Z_m-Z_0\|_1^{\alpha p}\big]\gtrsim s^p (\eta m)^{\alpha p}.
\end{equation}
The only way for~\eqref{eq:use upper lip} and~\eqref{eq:use lower lip} to be compatible with~\eqref{eq:use markov type} is if
\begin{equation*}
D\gtrsim \frac{1}{K} m^{\alpha-\frac{1+\theta(p-1)}{p}}\asymp \frac{1}{K}(\log n)^{\alpha-\frac{1+\theta(p-1)}{p}}.\tag*{\qedhere}
\end{equation*}
\end{proof}

\begin{remark}
{\em In Lemma~\ref{lem:markov type no snowflake} we took the metric space $X$ to be a discrete hypercube, but similar conclusions apply to snowflakes of expander graphs and graphs with large girth~\cite{LMN02}, as well as their subsets~\cite{BLMN05} and certain discrete groups~\cite{ANP09,NP08,NP11} (see also~\cite[Section~9.4]{Nao12}). We shall not attempt to state here the wider implications of the assumption~\eqref{eq:Mp theta assumption}  to the nonembeddability of snowflakes, since the various additional conclusions follow mutatis mutandis from the same argument as above, and Lemma~\ref{lem:markov type no snowflake}  as currently stated suffices for the proof of Theorem~\ref{thm:no large snowflake}.}
\end{remark}

\begin{remark} {\em Since the proof of Lemma~\ref{lem:markov type no snowflake} applied the Markov type $p$ assumption~\eqref{eq:Mp theta assumption}  to the discrete hypercube, it would have sufficed to work here with a classical weaker bi-Lipschitz invariant due to Enflo~\cite{Enf76}, called Enflo type. Such an obstruction played a role in ruling out certain snowflake embeddings in~\cite{FS11} (in a different context), though the fact that the argument of~\cite{FS11}  could be cast in the context of Enflo type was proved only later~\cite[Proposition~5.3]{Oht09}. Here we work with Markov type rather than Enflo type because the proof below for Wasserstein spaces yields this stronger conclusion without any additional effort.}
\end{remark}

The following lemma is a variant of~\cite[Lemma~4.1]{Oht09}.

\begin{lemma}\label{lem:powers of 2}
Fix $p\in [1,\infty)$ and $\theta\in [1/p,1]$. Suppose that $(X,d_X)$ is a metric space such that for every two $X$-valued independent and identically distributed finitely supported random  variables $Z,Z'$ and every $x\in X$ we have
\begin{equation}\label{eq:theta assumption X}
\E\big[d_X(Z,Z')^p\big]\le 2^{\theta p}\E\big[d_X(Z,x)^p\big].
\end{equation}
Then for every $k\in \N$ we have
\begin{equation}\label{eq:Mtype times power of 2}
M_p(X;2^k)\le 2^{k\left(\theta-\frac{1}{p}\right)}.
\end{equation}
\end{lemma}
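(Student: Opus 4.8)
The plan is a short induction built on the symmetrization identity \eqref{eq:psi identity} together with the hypothesis \eqref{eq:theta assumption X}, along the lines of the proof of~\cite[Lemma~4.1]{Oht09}. Fix $n\in \N$, a stationary reversible Markov chain $\{Z_t\}_{t=0}^\infty$ with state space $\n$, and a mapping $f:\n\to X$; let $\{Z_t'\}_{t=0}^\infty$ be the independent copy of the chain started at $Z_0$, exactly as in the definition of the Markov type constant. Everything reduces to showing that
$$
\forall\, \text{integer } k\ge 0,\qquad \E\big[d_X(f(Z_{2^k}),f(Z_0))^p\big]\le 2^{\theta p k}\,\E\big[d_X(f(Z_1),f(Z_0))^p\big],
$$
because $2^{\theta p k}=2^k\big(2^{k(\theta-1/p)}\big)^p$, so that this is precisely the assertion $M_p(X;2^k)\le 2^{k(\theta-1/p)}$ required in~\eqref{eq:Mtype times power of 2}.

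The engine of the induction is the single claim that for every $t\in \N$,
$$
\E\big[d_X(f(Z_{2t}),f(Z_0))^p\big]\le 2^{\theta p}\,\E\big[d_X(f(Z_t),f(Z_0))^p\big].
$$
To prove it, first apply the identity \eqref{eq:psi identity} to the symmetric function $\psi:\n\times\n\to\R$ given by $\psi(a,b)=d_X(f(a),f(b))^p$, obtaining $\E[d_X(f(Z_{2t}),f(Z_0))^p]=\E[d_X(f(Z_t),f(Z_t'))^p]$. Then condition on $Z_0$: conditionally on $Z_0$, the random variables $f(Z_t)$ and $f(Z_t')$ are independent, identically distributed, finitely supported $X$-valued random variables, so the assumption \eqref{eq:theta assumption X}, invoked conditionally with the free point taken to be $x=f(Z_0)$ (this is legitimate since \eqref{eq:theta assumption X} is required for an arbitrary $x\in X$, which here is allowed to depend on the value of $Z_0$), gives
$$
\E\big[d_X(f(Z_t),f(Z_t'))^p\,\big|\,Z_0\big]\le 2^{\theta p}\,\E\big[d_X(f(Z_t),f(Z_0))^p\,\big|\,Z_0\big].
$$
Taking expectations over $Z_0$ proves the claim.

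Finally, run the induction on $k$: the case $k=0$ is trivial, and if the displayed bound holds for $k-1$, then applying the claim with $t=2^{k-1}$ and combining with the inductive hypothesis yields
$$
\E\big[d_X(f(Z_{2^k}),f(Z_0))^p\big]\le 2^{\theta p}\,\E\big[d_X(f(Z_{2^{k-1}}),f(Z_0))^p\big]\le 2^{\theta p k}\,\E\big[d_X(f(Z_1),f(Z_0))^p\big].
$$
There is no genuine obstacle; the only point that needs a moment's care is the conditioning step, namely checking that conditionally on $Z_0$ the two half-time marginals $f(Z_t)$ and $f(Z_t')$ are really i.i.d., so that the barycenter-type inequality \eqref{eq:theta assumption X} applies verbatim with $f(Z_0)$ in the role of the free point $x$. (Note that the hypothesis $\theta\ge 1/p$ is not actually used in the argument; it merely makes the conclusion consistent, since $M_p(X;m)\ge$ a positive constant whenever $X$ has more than one point.)
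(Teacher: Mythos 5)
Your proposal is correct and follows essentially the same route as the paper: the symmetrization identity \eqref{eq:psi identity} applied to $\psi(a,b)=d_X(f(a),f(b))^p$, followed by the hypothesis \eqref{eq:theta assumption X} invoked conditionally on $Z_0$ with $x=f(Z_0)$, and then a doubling induction on $k$. The only (immaterial) difference is that the paper phrases the induction as the recursion $M_p(X;2t)\le 2^{\theta-\frac{1}{p}}M_p(X;t)$ on the Markov type constants, whereas you iterate the moment bound directly for a fixed chain; both yield \eqref{eq:Mtype times power of 2}.
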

\begin{proof} Fix $n\in \N$, a stationary reversible Markov chain $\{Z_t\}_{t=0}^\infty$ with state space $\n$, and $f:\n\to X$. Recalling~\eqref{eq:psi identity} with $\psi(i,j)=d_X(f(i),f(j))^p$, for every $t\in \N$ we have
\begin{multline}\label{eq:use independent ineq}
\E\big[d_X(Z_{2t},Z_0)^p\big]\stackrel{\eqref{eq:psi identity}}{=}\E\big[d_X(Z_{t},Z_t')^p\big]\stackrel{\eqref{eq:theta assumption X}}{\le} 2^{\theta p} \E\big[d_X(Z_{t},Z_0)^p\big]
\\\le 2^{\theta p-1}M_p(X;t)^p\cdot 2t\E\big[d_X(Z_{1},Z_0)^p\big],
\end{multline}
where the last step of~\eqref{eq:use independent ineq} uses the definition of $M_p(X;t)$. By the definition of $M_p(X;2t)$, we have thus proved that
$$
M_p(X;2t)\le 2^{\theta-\frac{1}{p}}M_p(X;t),
$$
so~\eqref{eq:Mtype times power of 2} follows by induction on $k$.
\end{proof}

Corollary~\ref{coro:alpha minus theta} below follows from Lemma~\ref{lem:markov type no snowflake} and Lemma~\ref{lem:powers of 2}. Specifically, under the assumptions and notation of Lemma~\ref{lem:powers of 2}, use Lemma~\ref{lem:markov type no snowflake} with $m$ replaced by $2^k$ and $\theta$ replaced by $(\theta p-1)/(p-1)$.

\begin{corollary}\label{coro:alpha minus theta}
Fix $p\in [1,\infty)$ and $\theta\in [1/p,1]$. Suppose that $(X,d_X)$ is a metric space that satisfies the assumptions of Lemma~\ref{lem:powers of 2}. Then for arbitrarily large $n\in \N$ there exists an $n$-point metric space $(Y,d_Y)$ such that for every $\alpha\in [\theta,1]$ we have
$$
c_X\left(Y,d_Y^\alpha\right)\gtrsim (\log n)^{\alpha-\theta}.
$$
\end{corollary}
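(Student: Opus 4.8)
The plan is to deduce Corollary~\ref{coro:alpha minus theta} by a direct concatenation of Lemma~\ref{lem:powers of 2} and Lemma~\ref{lem:markov type no snowflake}, exactly as the prose preceding the statement suggests. First I would invoke Lemma~\ref{lem:powers of 2}: since $(X,d_X)$ satisfies hypothesis~\eqref{eq:theta assumption X}, we obtain $M_p(X;2^k)\le 2^{k(\theta-1/p)}$ for every $k\in\N$. The point is to rewrite this in the shape of the Markov-type hypothesis~\eqref{eq:Mp theta assumption} of Lemma~\ref{lem:markov type no snowflake}, with $m=2^k$ and with the role of ``$\theta$'' in that lemma played by a different exponent. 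Setting $\vartheta\eqdef\frac{\theta p-1}{p-1}$ (note $\vartheta\in[0,1]$ precisely because $\theta\in[1/p,1]$, using $p\ge 1$; if $p=1$ the hypothesis forces $\theta=1$ and the statement is vacuous, so one may assume $p>1$), one checks the identity $\frac{\vartheta(p-1)}{p}=\theta-\frac1p$, so that $M_p(X;2^k)\le 2^{k(\theta-1/p)}=(2^k)^{\vartheta(p-1)/p}$, which is exactly~\eqref{eq:Mp theta assumption} with $K=1$, $m=2^k$ and $\theta\rightsquigarrow\vartheta$.

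Next I would feed this into Lemma~\ref{lem:markov type no snowflake} with $m=2^k$, $K=1$, and exponent $\vartheta$. Its conclusion produces an $n$-point metric space $(Y,d_Y)$ with $n=2^{4\cdot 2^k}$ such that for every $\alpha\in\big[\frac{1+\vartheta(p-1)}{p},1\big]$ one has $c_X(Y,d_Y^\alpha)\gtrsim(\log n)^{\alpha-\frac{1+\vartheta(p-1)}{p}}$. The only remaining task is bookkeeping on the exponents: one computes $\frac{1+\vartheta(p-1)}{p}=\frac1p+\big(\theta-\frac1p\big)=\theta$, so the admissible range of $\alpha$ is exactly $[\theta,1]$ and the exponent of $\log n$ in the lower bound is exactly $\alpha-\theta$. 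This yields $c_X(Y,d_Y^\alpha)\gtrsim(\log n)^{\alpha-\theta}$ for all $\alpha\in[\theta,1]$, which is the assertion of Corollary~\ref{coro:alpha minus theta}. Since $k\in\N$ ranges over all positive integers, $n=2^{4\cdot 2^k}$ takes arbitrarily large values, giving the ``arbitrarily large $n$'' quantifier in the statement.

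There is essentially no genuine obstacle here; the work is entirely in matching the two exponent conventions and verifying the algebraic identities $\frac{\vartheta(p-1)}{p}=\theta-\frac1p$ and $\frac{1+\vartheta(p-1)}{p}=\theta$. The one subtlety worth flagging is the degenerate case $p=1$, where $\vartheta$ is an indeterminate $0/0$; but then $\theta\in[1/p,1]=\{1\}$ forces $\theta=1$, the claimed range $[\theta,1]$ collapses to $\{1\}$, and the bound $(\log n)^{0}\asymp 1$ is trivially true, so this case requires no argument. For $p>1$ everything goes through as above, and the proof is just the two displayed substitutions followed by the exponent arithmetic.
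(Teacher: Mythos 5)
Your proposal is correct and is exactly the argument the paper gives (the paper's entire proof is the one-sentence recipe of applying Lemma~\ref{lem:markov type no snowflake} with $m$ replaced by $2^k$ and $\theta$ replaced by $(\theta p-1)/(p-1)$, whose legitimacy rests on precisely the two exponent identities you verify). Your extra remarks on the range $\vartheta\in[0,1]$ and the degenerate case $p=1$ are accurate and harmless.
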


The link between the above discussion and embeddings of snowflakes of metrics into Wasserstein spaces is explained in the following lemma, which is a variant of~\cite[Proposition~2.10]{Stu06}.
\begin{lemma}\label{lem:p sturm}
Fix $p\in [1,\infty)$ and $\theta\in [1/p,1]$. Suppose that $(X,d_X)$ is a metric space that satisfies the assumptions of Lemma~\ref{lem:powers of 2}, i.e., inequality~\eqref{eq:theta assumption X} holds true for $X$-valued random variables. Then the same inequality holds true in the metric space $(\Pp(X),\W_p)$ as well, i.e., for every two $\Pp(X)$-valued and identically distributed finitely supported random  variables $\mathfrak{M}, \mathfrak{M'}$ and every $\mu\in \Pp(X)$,
\begin{equation*}
\E\big[\W_p(\mathfrak{M},\mathfrak{M}')^p\big]\le 2^{\theta p}\E\big[\W_p(\mathfrak{M},\mu)^p\big].
\end{equation*}
\end{lemma}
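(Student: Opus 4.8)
The plan is to lift inequality~\eqref{eq:theta assumption X} from $X$ to $\Pp(X)$ by a gluing argument in the spirit of Sturm's proof of~\cite[Proposition~2.10]{Stu06}. Since $\mathfrak{M}$ is finitely supported we may list its values as $\mu_1,\dots,\mu_N\in \Pp(X)$ with $q_i\eqdef\Pr[\mathfrak{M}=\mu_i]$, and $\mathfrak{M}'$ is an independent copy. Fix $\mu\in \Pp(X)$ and $\delta>0$, and for each $i\in\{1,\dots,N\}$ choose $\pi_i\in\Pi(\mu_i,\mu)$ with $\iint_{X\times X} d_X(x,y)^p\,\d\pi_i(x,y)\le \W_p(\mu_i,\mu)^p+\delta$; one may instead take honest optimal couplings, which exist because $X$ is Polish, but near-optimal ones suffice and avoid invoking that fact. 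The crucial point is that all the $\pi_i$ share the \emph{same} second marginal $\mu$, so they can be glued over it: disintegrating $\pi_i(\d x,\d y)=\mu(\d y)\,\kappa_i(y,\d x)$ with respect to $\mu$ and setting
\[
\Gamma(\d x_1\cdots\d x_N\,\d y)\eqdef \mu(\d y)\prod_{i=1}^N\kappa_i(y,\d x_i)
\]
defines a Borel probability measure on $X^{N+1}$ whose marginal on the pair of coordinates $(x_i,y)$ equals $\pi_i$ for every $i$. Let $(W_1,\dots,W_N,W_0)$ be random variables with joint law $\Gamma$; then $W_i$ has law $\mu_i$, $W_0$ has law $\mu$, and $\E\big[d_X(W_i,W_0)^p\big]\le \W_p(\mu_i,\mu)^p+\delta$.

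Next I would use these random variables to couple the $\mu_i$'s pairwise at no extra cost: for any $i,j$ the pair $(W_i,W_j)$ is a coupling of $(\mu_i,\mu_j)$ (with finite $p$-th moment, since $\mu_i,\mu_j\in\Pp(X)$), hence $\W_p(\mu_i,\mu_j)^p\le \E\big[d_X(W_i,W_j)^p\big]$. Averaging against $q_iq_j$ gives
\[
\E\big[\W_p(\mathfrak{M},\mathfrak{M}')^p\big]=\sum_{i=1}^N\sum_{j=1}^N q_iq_j\,\W_p(\mu_i,\mu_j)^p\le \E\bigg[\sum_{i=1}^N\sum_{j=1}^N q_iq_j\,d_X(W_i,W_j)^p\bigg].
\]
The payoff is that the integrand can now be controlled pointwise: for a fixed sample point $\omega$, consider the finitely supported measure $\rho_\omega\eqdef\sum_{i=1}^N q_i\delta_{W_i(\omega)}\in\Pp(X)$ and let $Z,Z'$ be independent $X$-valued random variables each with law $\rho_\omega$. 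Then $\sum_{i,j} q_iq_j\,d_X(W_i(\omega),W_j(\omega))^p=\E\big[d_X(Z,Z')^p\big]$ and $\sum_i q_i\,d_X(W_i(\omega),W_0(\omega))^p=\E\big[d_X(Z,W_0(\omega))^p\big]$, so hypothesis~\eqref{eq:theta assumption X}, applied with $x=W_0(\omega)\in X$, yields
\[
\sum_{i=1}^N\sum_{j=1}^N q_iq_j\,d_X(W_i(\omega),W_j(\omega))^p\le 2^{\theta p}\sum_{i=1}^N q_i\,d_X(W_i(\omega),W_0(\omega))^p.
\]

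To finish I would take expectations over $\omega$ in the last display and use $\E[d_X(W_i,W_0)^p]\le \W_p(\mu_i,\mu)^p+\delta$, bounding the right-hand side by $2^{\theta p}\big(\sum_i q_i\W_p(\mu_i,\mu)^p+\delta\big)=2^{\theta p}\big(\E[\W_p(\mathfrak{M},\mu)^p]+\delta\big)$; letting $\delta\to0$ gives the claim. The routine parts — existence and measurability of the disintegration kernels $\kappa_i$, and finiteness of all the $p$-th moments that appear (automatic since each $\mu_i$ and $\mu$ lies in $\Pp(X)$) — I would dispatch quickly or cite from~\cite{Vil03}. The one genuinely substantive step is the gluing construction of $\Gamma$: it realizes, simultaneously on a single probability space, random variables with the prescribed laws $\mu_1,\dots,\mu_N$ together with an auxiliary variable of law $\mu$ to which each is near-optimally coupled, and it is precisely this simultaneous realization that legitimizes the pointwise invocation of~\eqref{eq:theta assumption X}. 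Everything else is bookkeeping.
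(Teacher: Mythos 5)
Your proof is correct, and at its core it is the same argument as the paper's: realize $\mu_1,\dots,\mu_N$ and $\mu$ simultaneously on one probability space so that each pair $(W_i,W_0)$ is an optimal (or near-optimal) coupling of $(\mu_i,\mu)$, observe that each $(W_i,W_j)$ is then automatically a coupling of $(\mu_i,\mu_j)$, and apply~\eqref{eq:theta assumption X} pointwise to the measure $\sum_i q_i\delta_{W_i(\omega)}$ and the point $W_0(\omega)$ before integrating. The difference lies only in how the simultaneous realization is produced. The paper first invokes the density of finitely supported measures in $\Pp(X)$ to reduce to the case where $\mu$ and each $\mu_i$ are uniform over multisets of a common size $N$; there the optimal couplings may be taken to be induced by permutations $\sigma_i$, and the gluing is the trivial one over the common index $k$ (see~\eqref{eq:sigmai optimal} and~\eqref{eq:sigmai sigmaj}), with the hypothesis applied separately for each $k$. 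You instead glue arbitrary near-optimal couplings $\pi_i\in\Pi(\mu_i,\mu)$ over their shared second marginal via disintegration. Your route avoids the approximation step and the implicit appeal to the structure of optimal couplings between equal-size empirical measures, at the price of invoking regular conditional probabilities, which is legitimate since $X$ is Polish (the standing assumption under which $\Pp(X)$ is defined), and the gluing lemma is available off the shelf in~\cite{Vil03}. One cosmetic remark: the lemma's statement omits the word ``independent,'' but, exactly as you assumed, $\mathfrak{M}'$ must be an independent copy of $\mathfrak{M}$ for the identity $\E\big[\W_p(\mathfrak{M},\mathfrak{M}')^p\big]=\sum_{i,j}q_iq_j\W_p(\mu_i,\mu_j)^p$ to hold; the paper's own proof uses this as well.
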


\begin{proof}
Suppose that the distribution of $\mathfrak{M}$ equals $\sum_{i=1}^n q_i\delta_{\mu_i}$ for some $\mu_1,\ldots,\mu_n\in \Pp(X)$ and $q_1,\ldots,q_n\in [0,1]$ with $\sum_{i=1}^nq_i=1$. Our goal is to show that
\begin{equation}\label{eq:curvature in coordinates}
\sum_{i=1}^n\sum_{j=1}^n q_iq_j \W_p(\mu_i,\mu_j)^p\le 2^{\theta p}\sum_{i=1}^nq_i \W_p(\mu_i,\mu)^p.
\end{equation}
The finitely supported probability measures are dense in $(\Pp(X),\W_p)$ (see~\cite{RR98,Vil03}), so it suffices to prove~\eqref{eq:curvature in coordinates} when there exists $N\in \N$ and points $x_{ik},x_k\in X$ for every $(i,k)\in \n\times \{1,\ldots, N\}$
such that we have $\mu=\frac{1}{N}\sum_{k=1}^N \delta_{x_k}$ and $\mu_i=\frac{1}{N}\sum_{k=1}^N \delta_{x_{ik}}$ for every $i\in \n$. Let $\{\sigma_i\}_{i=1}^N\subset S_N$ be permutations of $\{1,\ldots,N\}$ that induce optimal couplings of the pairs $(\mu,\mu_i)$, i.e.,
\begin{equation}\label{eq:sigmai optimal}
\forall\, i\in \n,\qquad  \W_p(\mu_i,\mu)^p=\frac{1}{N}\sum_{k=1}^N d_X(x_{i\sigma_i(k)},x_{k})^p.
\end{equation}
Since the measure $\frac{1}{N}\sum_{k=1}^N \delta_{(x_{i\sigma_i(k)},x_{j\sigma_j(k)})}$ is a coupling of $(\mu_i,\mu_j)$,
\begin{equation}\label{eq:sigmai sigmaj}
\forall\, i,j\in \n,\qquad \W_p(\mu_i,\mu_j)^p\le \frac{1}{N}\sum_{k=1}^N d_X(x_{i\sigma_i(k)},x_{j\sigma_j(k)})^p.
\end{equation}
Consequently,
\begin{multline*}
\sum_{i=1}^n\sum_{j=1}^n q_iq_j \W_p(\mu_i,\mu_j)^p\stackrel{\eqref{eq:sigmai sigmaj}}{\le} \frac{1}{N}\sum_{k=1}^N\sum_{i=1}^n\sum_{j=1}^n q_iq_jd_X(x_{i\sigma_i(k)},x_{j\sigma_j(k)})^p\\\stackrel{\eqref{eq:theta assumption X}}{\le} \frac{2^{\theta p}}{N}\sum_{k=1}^N\sum_{i=1}^n\sum_{j=1}^n q_iq_j d_X(x_{i\sigma_i(k)},x_{k})^p\stackrel{\eqref{eq:sigmai optimal}}{=} 2^{\theta p}\sum_{i=1}^nq_i \W_p(\mu_i,\mu)^p.\tag*{\qedhere}
\end{multline*}
\end{proof}

\begin{proof}[Proof of Theorem~\ref{thm:no large snowflake}] Let $(\Omega,\mu)$ be a probability space. For $p\in [1,\infty]$ define $T:L_p(\mu)\to L_p(\mu\times \mu)$ by $Tf(x,y)=f(x)-f(y)$. Then clearly
$\|T\|_{L_p(\mu)\to L_p(\mu\times \mu)}\le 2$ for $p\in \{1,\infty\}$ and
$$
\forall\, f\in L_2(\mu),\qquad \|Tf\|_{L_2(\mu\times \mu)}^2=2\|f\|_{L_2(\mu)}^2-2\Big(\int_\Omega f\d\mu\Big)^2\le 2\|f\|_{L_2(\mu)}^2.
$$
Or $\|T\|_{L_2(\mu)\to L_2(\mu\times \mu)}\le \sqrt{2}$. So, by the Riesz--Thorin  theorem (e.g.~\cite{Gar07}),
\begin{equation}\label{eq:p<2}
p\in [1,2]\implies \|T\|_{L_p(\mu)\to L_p(\mu\times \mu)} \le 2^{\frac{1}{p}},
\end{equation}
and
\begin{equation}\label{eq:p>2}
p\in [2,\infty]\implies \|T\|_{L_p(\mu)\to L_p(\mu\times \mu)} \le 2^{1-\frac{1}{p}}.
\end{equation}
 Switching to probabilistic terminology, the estimates~\eqref{eq:p<2} and~\eqref{eq:p>2} say that if $Z,Z'$ are i.i.d. random variables then $\E\big[|Z-Z'|^p\big]\le 2\E\big[|Z|^p\big]$ when $p\in [1,2]$ and  $\E\big[|Z-Z'|^p\big]\le 2^{p-1}\E\big[|Z|^p\big]$ when $p\in [2,\infty)$. By applying this to the random variables $Z-a,Z'-a$ for every $a\in \R$, we deduce that the real line (with its usual metric) satisfies~\eqref{eq:theta assumption X} with
\begin{equation}\label{eq:thetap}
\theta=\theta_p\eqdef \max\left\{\frac{1}{p},1-\frac{1}{p}\right\}.
\end{equation}
Invoking this statement coordinate-wise shows that $\ell_p^3=(\R^3,\|\cdot\|_p)$ satisfies~\eqref{eq:theta assumption X} with $\theta=\theta_p$. Lemma~\ref{lem:p sturm} therefore implies that $(\Pp(\ell_p^3),\W_p)$ also satisfies~\eqref{eq:theta assumption X} with $\theta=\theta_p$. Hence, by Corollary~\ref{coro:alpha minus theta} for arbitrarily large $n\in \N$ there exists an $n$-point metric space $(Y,d_Y)$ such that for every $\alpha \in (\theta_p,1]$,
\begin{equation*}\label{eq:put together distortion}
c_{(\Pp(\ell_p^3),\W_p)}(Y,d_Y^\alpha)\gtrsim (\log n)^{\alpha-\theta_p}=\left\{\begin{array}{ll}(\log n)^{\alpha-\frac{1}{p}}&\mathrm{if}\ p\in (1,2],\\
(\log n)^{\alpha+\frac{1}{p}-1}&\mathrm{if}\ p\in (2,\infty).\end{array}\right.
\end{equation*}
Since the $\ell_p$ norm on $\R^3$ is $\sqrt{3}$-equivalent to the $\ell_2$ norm on $\R^3$,
$$
c_{(\Pp(\ell_p^3),\W_p)}(Y,d_Y^\alpha)\asymp c_{(\Pp(\ell_2^3),\W_p)}(Y,d_Y^\alpha),
$$
thus completing the proof of Theorem~\ref{thm:no large snowflake}.
\end{proof}

\begin{remark} {\em In the proof of Theorem~\ref{thm:no large snowflake} we chose to check the validity of~\eqref{eq:theta assumption X}  with $\theta=\theta_p$ given in~\eqref{eq:thetap} using an interpolation argument since it is very short. But, there are different proofs of this fact: when $p\in [1,2)$ one could start from the trivial case $p=2$, and then pass to general $p\in [1,2)$ by invoking the classical fact~\cite{Sch38} that the metric space $(\R,|x-y|^{p/2})$ admits an isometric embedding into Hilbert space. Alternatively, in~\cite[Lemma~3]{Nao01} this is proved via a direct computation.}
\end{remark}

\begin{question}\label{Q:p>2}
As discussed in the Introduction, it seems plausible that Theorem~\ref{thm:main snowflake} and Theorem~\ref{thm:no large snowflake} are not sharp when $p\in (2,\infty)$. Specifically, we conjecture that there exist $D_p\in [1,\infty)$ such that for every finite metric space $(X,d_X)$ we have
\begin{equation}\label{eq:Dp p>2}
c_{\Pp(\R^3)}\Big(X,\sqrt{d_X}\Big)\le D_p.
\end{equation}
Perhaps~\eqref{eq:Dp p>2} even holds true with $D_p=1$.  As discussed in Remark~\ref{rem:analogy with lp}, since $L_2$ admits an isometric embedding into $L_p$ (see e.g.~\cite{Woj91}), the perceived analogy between Wasserstein $p$ spaces and $L_p$ spaces makes it natural to ask whether or not $(\mathscr{P}_{\!2}(\R^3),\W_2)$ admits a bi-Lipschitz embedding into $(\Pp(\R^3),\W_p)$. If the answer to this question were positive then~\eqref{eq:Dp p>2} would hold true by virtue of the case $p=2$ of Theorem~\ref{thm:main snowflake}. We also conjecture that the lower bound of Theorem~\ref{thm:no large snowflake} could be improved when $p>2$ to state that for arbitrarily large $n\in \N$ there exists an $n$-point metric space $(Y,d_Y)$ such that for every $\alpha\in (1/2,1]$,
\begin{equation}\label{eq:lower p>2 conj}
c_{(\Pp(\R^3),\W_p)}(Y,d_Y^\alpha)\gtrsim_p (\log n)^{\alpha-\frac12}.
\end{equation}
It was shown in~\cite{NPSS06} that $L_p$ has Markov type $2$ for every $p\in (2,\infty)$. We therefore ask whether or not $(\Pp(\R^3),\W_p)$ has Markov type $2$ for every $p\in (2,\infty)$. A positive answer to this question would imply that the lower bound~\eqref{eq:lower p>2 conj} is indeed achievable. For this purpose it would also suffice to show that for every $p\in (2,\infty)$ and $k\in \N$ we have
\begin{equation}\label{eq:markov type 2 power p wasserstein}
M_p((\Pp(\R^3),\W_p);2^k)\lesssim_p 2^{k\left(\frac12-\frac{1}{p}\right)}.
\end{equation}
Proving~\eqref{eq:markov type 2 power p wasserstein} may be easier than proving that $M_2(\Pp(\R^3),\W_p)<\infty$, since the former involves arguing about the $p$th powers of Wasserstein~$p$ distances while the latter involves arguing about Wasserstein $p$ distances squared. Note that $M_p(L_p;m)\lesssim \sqrt{p} m^{1/2-1/p}$ by~\cite{NPSS06} (see also~\cite[Theorem~4.3]{Nao14}), so the $L_p$-version of~\eqref{eq:markov type 2 power p wasserstein} is indeed valid.
\end{question}

We end this section by showing how Lemma~\ref{lem:powers of 2} implies bounds on the Markov type $p$ constant $M_p(X;t)$ for any time $t\in \N$, and not only when $t=2^k$ for some $k\in \N$ as in~\eqref{eq:Mtype times power of 2}. For the purpose of proving Theorem~\ref{thm:main snowflake}, Lemma~\ref{lem:powers of 2} suffices as stated, so the ensuing discussion is included for completeness, and could be skipped by those who are interested only in the proof of Theorem~\ref{thm:no large snowflake}.


The case $p=2$ and $\theta=1/2$ of Lemma~\ref{lem:powers of 2} corresponds to proving that metric spaces that are nonnegatively curved in the sense of Alexandrov have Markov type $2$: this was established by Ohta in~\cite{Oht09}, whose work inspired the arguments that were presented above. Specifically, Ohta showed in~\cite{Oht09} how to pass from~\eqref{eq:Mtype times power of 2} with $p=2$ and $\theta=1/2$ (i.e., $M_2(X,2^k)\le 1$ for every $k\in \N$) to $M_2(X)\le \sqrt{6}=2.449...$, and he also included in~\cite{Oht09} an argument of Naor and Peres that improves this to $M_2(X)\le 1+\sqrt{2}=2.414..$. Below we further refine the latter argument, yielding the best known estimate on the Markov type $2$ constant of Alexandrov spaces of nonnegative curvature; see~\eqref{eq:our M2 estimate} below. This constant is of interest since it was shown in~\cite{OP09} that if $(X,d_X)$ is a geodesic metric space with $M_2(X)=1$ then $X$ is nonnegatively curved in the sense of Alexandrov. It is plausible that, conversely, $M_2(X)=1$ if $X$ is nonnegatively curved in the sense of Alexandrov, but, as noted in~\cite{OP09}, this seems to be unknown even for the circle  $X=S^1$.

For every $\theta\in (0,1]$ define $\phi_\theta:[0,1]\to \R$ by
\begin{equation}\label{eq:def phi theta}
\forall\, s\in [0,1],\qquad \phi_\theta(s)\eqdef s^\theta-(1-s)^\theta.
\end{equation}
Then $\phi_\theta([0,1])=[-1,1]$ and since $\phi_\theta'(s)=\theta s^{\theta-1}+\theta(1-s)^{\theta-1}>0$, the inverse $\phi_\theta^{-1}$ is well-defined and increasing on $[-1,1]$. The following elementary numerical lemma will be used later.

\begin{lemma}\label{lem:def c theta} For all $\theta\in (0,1]$ there is a unique $c(\theta)\in (1,\infty)$ satisfying
\begin{equation}\label{eq:fixed point equation}
c(\theta)=\frac{c(\theta)\phi_\theta^{-1}\left(\frac{2^\theta-1}{c(\theta)}\right)^\theta+1}{\left(\phi_\theta^{-1}\left(\frac{2^\theta-1}{c(\theta)}\right)+1\right)^\theta}.
\end{equation}
\end{lemma}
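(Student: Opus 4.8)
The plan is to treat \eqref{eq:fixed point equation} as a fixed-point problem $c = F_\theta(c)$ on the interval $(1,\infty)$ and to establish existence and uniqueness by a monotonicity/continuity argument. First I would make the substitution $u = u(c) \eqdef \phi_\theta^{-1}\!\left(\frac{2^\theta-1}{c}\right)$, so that the right-hand side of \eqref{eq:fixed point equation} becomes
\[
F_\theta(c) = \frac{c\,u(c)^\theta + 1}{\bigl(u(c)+1\bigr)^\theta}.
\]
Since $\phi_\theta$ is an increasing bijection from $[0,1]$ onto $[-1,1]$ with $\phi_\theta^{-1}$ increasing, and since $c\mapsto (2^\theta-1)/c$ is strictly decreasing and maps $(1,\infty)$ onto $(0,2^\theta-1)\subset(0,1)$, I would first record that $u:(1,\infty)\to(0,\phi_\theta^{-1}(2^\theta-1))$ is a strictly decreasing, continuous function, with $u(c)\to \phi_\theta^{-1}(2^\theta-1)>0$ as $c\to1^+$ and $u(c)\to 0$ as $c\to\infty$. (One checks $\phi_\theta^{-1}(2^\theta-1)<1$ because $\phi_\theta(1)=1 > 2^\theta-1$ for $\theta\in(0,1]$.)

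The main step is then to analyze $g_\theta(c) \eqdef F_\theta(c) - c$ on $(1,\infty)$ and show it has a unique zero. At the left endpoint, as $c\to 1^+$ we have $u\to u_0 \eqdef \phi_\theta^{-1}(2^\theta-1)\in(0,1)$, so $F_\theta(c)\to \frac{u_0^\theta+1}{(u_0+1)^\theta}$; I would show this limit is strictly greater than $1$, i.e. $u_0^\theta + 1 > (u_0+1)^\theta$, which holds because $\theta\le 1$ implies $(u_0+1)^\theta \le u_0^\theta + 1^\theta$ with strict inequality for $u_0>0,\theta<1$ (and the $\theta=1$ case can be handled separately or by noting $2^1-1=1$ forces $c=1$ is excluded, so one treats $\theta=1$ directly). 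Hence $g_\theta(c)>0$ near $c=1$. At the right endpoint, as $c\to\infty$ we have $u(c)\to0$, and the dominant behavior of $c\,u(c)^\theta$ must be controlled: from $\phi_\theta(u)\sim u^\theta$ as $u\to0^+$ (since $(1-u)^\theta\to1$, actually $\phi_\theta(u) = u^\theta - (1-u)^\theta \to -1$, so in fact $u(c)\to$ the value where $\phi_\theta(u)=0^+$)—here I need to be careful: $\frac{2^\theta-1}{c}\to 0^+$, and $\phi_\theta(u)=0$ at $u = u_* \in (0,1)$ with $u_*^\theta = (1-u_*)^\theta$, i.e. $u_*=1/2$. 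So actually $u(c)\to 1/2$, not $0$, as $c\to\infty$. Then $F_\theta(c) = \frac{c(1/2)^\theta + 1 + o(1)}{(3/2)^\theta}\sim c\cdot\frac{2^{-\theta}}{(3/2)^\theta} = c\cdot 3^{-\theta}$, and since $3^{-\theta}<1$ we get $F_\theta(c) - c \to -\infty$. Thus $g_\theta$ changes sign on $(1,\infty)$, and continuity of $u$ (hence of $g_\theta$) gives a zero by the intermediate value theorem.

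For uniqueness I would show $c\mapsto F_\theta(c)/c$ is strictly decreasing on $(1,\infty)$, so that $F_\theta(c)=c$, equivalently $F_\theta(c)/c=1$, has at most one solution. Writing $F_\theta(c)/c = \frac{u(c)^\theta + 1/c}{(u(c)+1)^\theta}$, and using that $u(c)$ is strictly decreasing in $c$, one sees the numerator $u(c)^\theta + 1/c$ is strictly decreasing while the denominator $(u(c)+1)^\theta$ is strictly decreasing too, so this is not immediately a ratio of monotone functions in the convenient direction; the cleanest route is to differentiate $\log(F_\theta(c)/c)$ and show it is negative, using $u'(c) = \frac{-(2^\theta-1)/c^2}{\phi_\theta'(u(c))}<0$ and the explicit $\phi_\theta'(u) = \theta(u^{\theta-1}+(1-u)^{\theta-1})$. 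This reduces to a one-variable inequality in $u\in(1/2, u_0)$ and $c=\frac{2^\theta-1}{\phi_\theta(u)}$. \textbf{The main obstacle} I anticipate is precisely this uniqueness/monotonicity computation: the expression $F_\theta(c)/c$ depends on $c$ both explicitly and through $u(c)$, and showing the total derivative has a fixed sign for all $\theta\in(0,1]$ simultaneously requires carefully combining the bound on $u'(c)$ with the algebraic structure of $F_\theta$; an alternative that sidesteps some of this is to instead prove directly that $g_\theta(c)=F_\theta(c)-c$ is strictly decreasing wherever it vanishes (a local condition at any root suffices for uniqueness when combined with the sign behavior at the endpoints), which should follow from a single inequality of the form $F_\theta'(c)<1$ at points where $F_\theta(c)=c$.
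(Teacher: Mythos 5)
Your skeleton is the same as the paper's (intermediate value theorem at the two endpoints for existence, a monotonicity statement for uniqueness), and your endpoint analysis is correct once you fix the slip you catch mid-argument: $u(c)\to 1/2$ as $c\to\infty$ (so the range of $u$ is $(1/2,\phi_\theta^{-1}(2^\theta-1))$, not $(0,\phi_\theta^{-1}(2^\theta-1))$), giving $F_\theta(c)\sim 3^{-\theta}c$ and hence $F_\theta(c)-c\to-\infty$; and $F_\theta(1^+)>1$ by strict subadditivity of $t\mapsto t^\theta$ when $\theta<1$. (Your unease about $\theta=1$ is well founded: there $\phi_1^{-1}(1/c)=(1+c)/(2c)$ and $F_1(c)=c(3+c)/(1+3c)$, whose only fixed point is $c=1$, so the endpoint case genuinely degenerates; the paper's own proof also glosses over this, since its claim $\psi_\theta(s)<1$ fails at $\theta=1$. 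This does not affect the application, which uses $\theta=1/2$.)

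The genuine gap is the uniqueness step, which you correctly flag as the main obstacle but do not carry out, and the route you propose (differentiating $\log(F_\theta(c)/c)$ with $c$ appearing both explicitly and through $u(c)$) is needlessly hard. The missing idea is an algebraic preprocessing step: clear the denominator first. The equation $c=(cu^\theta+1)/(u+1)^\theta$ is equivalent to $c\left[(u+1)^\theta-u^\theta\right]=1$, i.e.\ to $h_\theta(c)=1$ where $\psi_\theta(s)\eqdef(s+1)^\theta-s^\theta$ and $h_\theta(c)\eqdef c\,\psi_\theta\!\left(\phi_\theta^{-1}\!\left(\tfrac{2^\theta-1}{c}\right)\right)$. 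This removes the troublesome $1/c$ term from your numerator, and the two endpoint facts become $h_\theta(1)<1$ (since $\psi_\theta<1$ for $\theta<1$) and $h_\theta(c)\to\infty$ (since $\psi_\theta(1/2)>0$). For uniqueness one then shows $h_\theta$ is strictly increasing: writing $y=\phi_\theta^{-1}((2^\theta-1)/c)$, the chain rule gives $h_\theta'(c)=\bigl(\phi_\theta'(y)\psi_\theta(y)-\phi_\theta(y)\psi_\theta'(y)\bigr)/\phi_\theta'(y)$, and the numerator equals $\theta\bigl(2y^{1-\theta}+(1-y)^{1-\theta}-(1+y)^{1-\theta}\bigr)/\bigl(y^{1-\theta}(1-y^2)^{1-\theta}\bigr)$, which is nonnegative by subadditivity of $t\mapsto t^{1-\theta}$. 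Note also that global monotonicity of $F_\theta(c)/c$, which you propose to prove, is not actually clear and is stronger than needed; since $F_\theta(c)/c-1=(1-h_\theta(c))/\bigl(c(u+1)^\theta\bigr)$, monotonicity of $h_\theta$ already gives the single sign change you want. Without some such reformulation, your uniqueness argument remains a plan rather than a proof.
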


\begin{proof} The identity~\eqref{eq:fixed point equation} is equivalent to $h_\theta(c(\theta))=1$, where for every $s>0$ and $c\in [1,\infty)$ we set
$$
\psi_\theta(s)\eqdef (s+1)^\theta-s^\theta \qquad \mathrm{and}\qquad h_\theta(c)\eqdef c\psi_\theta\left(\phi_\theta^{-1}\left(\frac{2^\theta-1}{c}\right)\right).
$$
Observe that because $\theta\in (0,1]$ we have $\psi_\theta(s)<1$ for every $s>0$. Hence $h_\theta(c)<c$ for every $c\in (0,\infty)$, and in particular $h_\theta(1)<1$. Moreover, $\phi_\theta^{-1}(0)=1/2$, so that
$$
\lim_{c\to\infty} \psi_\theta\left(\phi_\theta^{-1}\left(\frac{2^\theta-1}{c}\right)\right)=\psi_\theta\left(\frac12\right)=\frac{3^\theta}{2^\theta}-\frac{1}{2^\theta}>0.
$$
Hence $\lim_{c\to\infty} h_\theta(c)=\infty$. It follows by continuity that there exists $c\in (0,\infty)$ such that $h_\theta(c)=1$. To prove the uniqueness of such $c>1$, it suffice to show that $h_\theta$ is increasing on $(0,\infty)$. Now,
\begin{align*}
h_\theta'(c)=\psi_\theta\left(\phi_\theta^{-1}\left(\frac{2^\theta-1}{c}\right)\right)-\frac{2^\theta-1}{c}\cdot
\frac{\psi_\theta'\left(\phi_\theta^{-1}\left(\frac{2^\theta-1}{c}\right)\right)}{\phi_\theta'\left(\phi_\theta^{-1}\left(\frac{2^\theta-1}{c}\right)\right)}
=\frac{\phi_\theta'(y)\psi_\theta(y)-\phi_\theta(y)\psi_\theta'(y)}{\phi_\theta'(y)},
\end{align*}
where we write $y=\phi_\theta^{-1}((2^\theta-1)/c)$. Since $\phi_\theta$ is increasing, it therefore suffices to show that $\phi_\theta'(y)\psi_\theta(y)-\phi_\theta(y)\psi_\theta'(y)>0$ for all $y\in (0,1)$.
One directly computes that
$$
\phi_\theta'(y)\psi_\theta(y)-\phi_\theta(y)\psi_\theta'(y)=\theta\cdot \frac{2y^{1-\theta}+(1-y)^{1-\theta}-(1+y)^{1-\theta}}{y^{1-\theta}(1-y^2)^{1-\theta}}.
$$
It remains to note that by the subadditivity of $t\mapsto t^{1-\theta}$ we have
\begin{equation*}
(1+y)^{1-\theta}\le (1-y)^{1-\theta}+(2y)^{1-\theta}\le (1-y)^{1-\theta}+2y^{1-\theta}.\tag*{\qedhere}
\end{equation*}
\end{proof}

\begin{lemma}\label{lem:pass to all t}
Fix $p\in [1,\infty)$ and $\theta\in [1/p,1]$. Suppose that $(X,d_X)$ is a metric space that satisfies the assumptions of Lemma~\ref{lem:powers of 2}, i.e., inequality~\eqref{eq:theta assumption X} holds true for $X$-valued random variables. Then
$$
\forall\, t\in \N,\qquad M_p(X;t)\le c(\theta) t^{\theta-\frac{1}{p}},
$$
where $c(\theta)$ is from Lemma~\ref{lem:def c theta}. Thus, if $\theta=1/p$ then $X$ has Markov type $p$ with $M_p(X)\le c(1/p)$.
\end{lemma}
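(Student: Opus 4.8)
The plan is to introduce the ``worst total displacement'' function $g:\N\to[1,\infty)$ defined by $g(t)\eqdef\big(t\,M_p(X;t)^p\big)^{1/p}$, so that $g(t)^p$ equals the supremum of $\E[d_X(f(Z_t),f(Z_0))^p]/\E[d_X(f(Z_1),f(Z_0))^p]$ over all stationary reversible Markov chains $\{Z_t\}_{t=0}^\infty$ on some $\n$ and all $f:\n\to X$. The asserted estimate $M_p(X;t)\le c(\theta)t^{\theta-1/p}$ is then exactly $g(t)\le c(\theta)t^\theta$, and the final sentence of the lemma (that $\theta=1/p$ forces $M_p(X)\le c(1/p)$) follows by taking the supremum over $t$. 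So the whole task reduces to proving $g(t)\le c(\theta)t^\theta$ for every $t\in\N$.

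First I would record four properties of $g$. \emph{(a)} $g(1)=1$, trivially. \emph{(b) Doubling:} $g(2m)\le 2^\theta g(m)$; this is nothing but the chain $\E[d_X(Z_{2m},Z_0)^p]=\E[d_X(Z_m,Z_m')^p]\le 2^{\theta p}\E[d_X(Z_m,Z_0)^p]$ that already appears in the proof of Lemma~\ref{lem:powers of 2} (it uses identity~\eqref{eq:psi identity} and hypothesis~\eqref{eq:theta assumption X}), divided through by $\E[d_X(Z_1,Z_0)^p]$ and supremized; iterating (a) and (b) gives the power-of-two estimate $g(2^k)\le 2^{k\theta}$, which is Lemma~\ref{lem:powers of 2} again. \emph{(c) Subadditivity:} $g(a+b)\le g(a)+g(b)$, from the triangle inequality for $d_X$ applied to $Z_0,Z_a,Z_{a+b}$, Minkowski's inequality in $L_p$, and stationarity (which gives $\E[d_X(Z_{a+b},Z_a)^p]=\E[d_X(Z_b,Z_0)^p]$). \emph{(d) Reverse triangle inequality:} $g(a)\le g(a+b)+g(b)$, obtained in the same way from $d_X(Z_0,Z_a)\le d_X(Z_0,Z_{a+b})+d_X(Z_{a+b},Z_a)$. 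Morally, (d) expresses the freedom to run the chain \emph{past} time $a$ to the next power of two and then come back; it is absent from the crude argument that uses only (b) and (c) (which, for $p=2$, $\theta=1/2$, yields only the Naor--Peres constant $1+\sqrt2$), and it is the main new point enabling the sharper constant $c(\theta)$.

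With these in hand the plan is to prove $g(n)\le c(\theta)n^\theta$ by strong induction on $n$; the base case $n=1$ is (a) together with $c(\theta)>1$. For $n\ge2$ choose $k$ with $2^{k-1}<n\le 2^k$. If $n=2^k$ then $g(n)\le 2^\theta g(2^{k-1})\le 2^\theta c(\theta)(2^{k-1})^\theta=c(\theta)n^\theta$ by (b) and the inductive hypothesis. If $2^{k-1}<n<2^k$, put $\lambda\eqdef n/2^k\in(\tfrac12,1)$ and $y\eqdef\phi_\theta^{-1}\!\big((2^\theta-1)/c(\theta)\big)$. From the defining equation~\eqref{eq:fixed point equation} of $c(\theta)$ one has $c(\theta)\psi_\theta(y)=1$, where $\psi_\theta(s)=(s+1)^\theta-s^\theta$ as in the proof of Lemma~\ref{lem:def c theta}, and from the definition of $y$ one has $c(\theta)\phi_\theta(y)=2^\theta-1$; adding these and using $\psi_\theta(y)+\phi_\theta(y)=(1+y)^\theta-(1-y)^\theta$ gives $c(\theta)\big((1+y)^\theta-(1-y)^\theta\big)=2^\theta$, i.e.\ $c(\theta)\,\phi_\theta\big(\tfrac{1+y}{2}\big)=1$. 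Now split into two cases. If $2\lambda-1\le y$, I would apply (c) in the form $g(n)\le g(2^{k-1})+g(n-2^{k-1})\le 2^{(k-1)\theta}+c(\theta)(n-2^{k-1})^\theta$ and observe that, after dividing by $2^{(k-1)\theta}$, the target $g(n)\le c(\theta)n^\theta$ becomes $c(\theta)\psi_\theta(2\lambda-1)\ge1$, which holds because $\psi_\theta$ is nonincreasing on $[0,1]$ and $2\lambda-1\le y$, so $\psi_\theta(2\lambda-1)\ge\psi_\theta(y)=1/c(\theta)$. If instead $2\lambda-1\ge y$, I would apply (d) in the form $g(n)\le g(2^k)+g(2^k-n)\le 2^{k\theta}+c(\theta)(2^k-n)^\theta$, and now the target becomes $c(\theta)\phi_\theta(\lambda)\ge1$, which holds because $\phi_\theta$ is increasing (as noted just before Lemma~\ref{lem:def c theta}) and $\lambda\ge\tfrac{1+y}{2}$, so $\phi_\theta(\lambda)\ge\phi_\theta(\tfrac{1+y}{2})=1/c(\theta)$. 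In both cases $n-2^{k-1}$ and $2^k-n$ are integers strictly between $0$ and $n$, so the inductive hypothesis applies, while $g(2^{k-1})$ and $g(2^k)$ are supplied by the power-of-two estimate.

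The main obstacle — and the only genuinely new idea — is item (d), together with the realization that the interval $(2^{k-1},2^k)$ should be cut at the crossover point $\lambda=\tfrac{1+y}{2}$, using subadditivity below it and the reverse inequality above it. After that, the role of Lemma~\ref{lem:def c theta} is precisely to guarantee that the two resulting requirements, $c(\theta)\psi_\theta(\cdot)\ge1$ and $c(\theta)\phi_\theta(\cdot)\ge1$, become compatible exactly at the value $c(\theta)$; everything else is a routine bookkeeping exercise with the triangle inequality, Minkowski's inequality, stationarity, and the monotonicity of $\phi_\theta$ and $\psi_\theta$.
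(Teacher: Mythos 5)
Your proof is correct and follows essentially the same route as the paper: the two decompositions (subadditivity to the power of two below, and the ``overshoot and come back'' reverse triangle inequality to the power of two above) are exactly the paper's inequalities~\eqref{eq:first triangle} and~\eqref{eq:second triangle}, combined with Lemma~\ref{lem:powers of 2} and strong induction. Your explicit case split at the crossover $\lambda=\tfrac{1+y}{2}$ is just an unfolding of the paper's identity~\eqref{for induction verison c theta}, where the supremum of the minimum of the two bounds is evaluated at the same crossover point.
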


 Because, by the Lang--Schroeder--Sturm inequality~\eqref{eq:LSS}, Alexandrov spaces of nonnegative curvature satisfy the assumption of Lemma~\ref{lem:powers of 2} with $p=2$ and $\theta=1/2$, we have the following corollary. Note that $c(1/2)$ can be computed explicitly   by solving the equation~\eqref{eq:fixed point equation}.

\begin{corollary}\label{eq:Mtype 2 const}
Suppose that $(X,d_X)$ is nonnegatively curved in the sense of Alexandrov. Then  the Markov type $2$ constant of $X$ satisfies
\begin{equation}\label{eq:our M2 estimate}
M_2(X)\le c\left(\frac12\right)= \sqrt{1+\sqrt{2}+\sqrt{4\sqrt{2}-1}}=2.08...
\end{equation}
\end{corollary}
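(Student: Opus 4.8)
The plan is to read the corollary off \lemmaref{lem:pass to all t}, the only genuine computation being the closed-form evaluation of $c(1/2)$. First I would note that if $(X,d_X)$ is nonnegatively curved in the sense of Alexandrov then the Lang--Schroeder--Sturm inequality~\eqref{eq:LSS} holds in $X$; since $2^{\theta p}=2^{2\cdot(1/2)}=2$, this is precisely the statement that $X$ satisfies the hypothesis~\eqref{eq:theta assumption X} of \lemmaref{lem:pass to all t} with $p=2$ and $\theta=1/2$. As $1/2\in[1/p,1]$ when $p=2$, \lemmaref{lem:pass to all t} applies; its conclusion reads $M_2(X;t)\le c(1/2)\,t^{(1/2)-(1/2)}=c(1/2)$ for every $t\in\N$ (the exponent $\theta-1/p$ vanishing), and hence $M_2(X)=\sup_{t\in\N}M_2(X;t)\le c(1/2)$. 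This is exactly the last assertion of \lemmaref{lem:pass to all t} in the case $\theta=1/p$.

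It remains to make $c(1/2)$ explicit. Put $c=c(1/2)$ and $y=\phi_{1/2}^{-1}\!\left(\frac{\sqrt2-1}{c}\right)\in(0,1)$. By the definition of $y$ and of $\phi_{1/2}$ in~\eqref{eq:def phi theta} one has $c\left(\sqrt y-\sqrt{1-y}\right)=\sqrt2-1$, while the fixed-point equation~\eqref{eq:fixed point equation}, rewritten as in the proof of \lemmaref{lem:def c theta}, reads $c\left(\sqrt{1+y}-\sqrt y\right)=1$; adding these gives $c\left(\sqrt{1+y}-\sqrt{1-y}\right)=\sqrt2$. Writing $a=\sqrt{1+y}$, $b=\sqrt{1-y}$, $d=\sqrt y$, so that $a^2-d^2=1$ and $a^2+b^2=2$, the relation $c(a-d)=1$ together with $(a-d)(a+d)=a^2-d^2=1$ forces $a+d=c$, hence $a=\frac12\left(c+\frac1c\right)$. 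Substituting $b=a-\frac{\sqrt2}{c}$ into $b^2=2-a^2$ and clearing denominators reduces everything to
$$
w^2-(2+2\sqrt2)\,w+(5-2\sqrt2)=0,\qquad w=c^2,
$$
whose larger root --- the relevant one since $c>1$ by \lemmaref{lem:def c theta} --- is $w=1+\sqrt2+\sqrt{4\sqrt2-2}$. Therefore $c(1/2)=\sqrt{\,1+\sqrt2+\sqrt{4\sqrt2-2}\,}=2.08\ldots$, the value recorded in~\eqref{eq:our M2 estimate}.

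Given \lemmaref{lem:pass to all t}, the corollary itself presents no real obstacle: the only care needed is the elimination of $y$ above and the choice of the root $w>1$. The substance lies one level down, in \lemmaref{lem:pass to all t}, which I would prove by strong induction on $t$. For even $t=2s$ one uses the halving bound $M_p(X;2s)\le 2^{\theta-1/p}M_p(X;s)$ already obtained inside the proof of \lemmaref{lem:powers of 2}. For $2^k\le t<2^{k+1}$, with $r=t-2^k$, one estimates the walk from $Z_0$ to $Z_t$ by the triangle inequality along one of the two concatenations $Z_0\rightsquigarrow Z_{2^k}\rightsquigarrow Z_t$ (when $r\le y\,2^k$) or $Z_0\rightsquigarrow Z_{2^{k+1}}\rightsquigarrow Z_t$ (when $r>y\,2^k$, which is legitimate because the chain runs for all times $\ge 0$), bounding the power-of-two legs by \lemmaref{lem:powers of 2} and the remaining leg, of length $<2^k\le t$, by the inductive hypothesis. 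The delicate point --- and the place where~\eqref{eq:fixed point equation} is actually forced --- is that these two estimates must agree at the crossover $r=y\,2^k$; granted that calibration, the induction closes using only the monotonicity, concavity and subadditivity of $s\mapsto s^\theta$. That crossover calibration is the step I expect to be the crux of the whole argument.
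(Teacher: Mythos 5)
Your proposal is correct and follows the same route as the paper: the Lang--Schroeder--Sturm inequality~\eqref{eq:LSS} verifies hypothesis~\eqref{eq:theta assumption X} with $p=2$, $\theta=1/2$, and \lemmaref{lem:pass to all t} (whose proof you sketch accurately, including the role of the crossover/fixed-point calibration~\eqref{for induction verison c theta}) then gives $M_2(X)\le c(1/2)$. One remark on the closed form: your elimination is right and yields $c(1/2)^2=1+\sqrt{2}+\sqrt{4\sqrt{2}-2}$, which is consistent with the numerical value $2.080\ldots$; the radicand $4\sqrt{2}-1$ printed in~\eqref{eq:our M2 estimate} would give $2.138\ldots$ and appears to be a typo in the paper, so your formula is the correct one.
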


\begin{proof}[Proof of Lemma~\ref{lem:pass to all t}] We claim that the number $c(\theta)$ of Lemma~\ref{lem:def c theta} satisfies
\begin{equation}\label{for induction verison c theta}
\sup_{s\in [0,1]}\frac{\min\left\{1+c(\theta) s^\theta,2^\theta+c(\theta)(1-s)^\theta\right\}}{(1+s)^\theta}=c(\theta).
\end{equation}
Indeed, observe that the function $s\mapsto (1+c(\theta) s^\theta)/(1+s)^\theta$ is increasing on $[0,1]$ because one directly computes that its derivative equals $\theta(c(\theta)-s^{1-\theta})/(s^{1-\theta}(1+s)^{1+\theta})$, and by Lemma~\ref{lem:def c theta} we have $c(\theta)>1$ (recall also that $0<\theta\le 1$). Since the function $s\mapsto (2^\theta+c(\theta)(1-s)^\theta)/(1+s)^\theta$ is decreasing on $[0,1]$, it follows that the supremum that appears in the left hand side of~\eqref{for induction verison c theta} is attained when $1+c(\theta) s^\theta=2^\theta+c(\theta)(1-s)^\theta$, or equivalently when $\phi_\theta(s)=(2^\theta-1)/c(\theta)$, where we recall~\eqref{eq:def phi theta}. Thus $s=\phi_\theta^{-1}((2^\theta-1)/c(\theta))$ and therefore~\eqref{for induction verison c theta} is equivalent to~\eqref{eq:fixed point equation}.

Fix $n\in \N$, a stationary reversible Markov chain $\{Z_t\}_{t=0}^\infty$ on $\n$, and $f:\n\to X$. For simplicity of notation write $U_t=f(Z_t)$.  We shall prove by induction on $t\in \N$ that
\begin{equation}\label{eq: t induction}
\E\big[d_X(U_t,U_0)^p\big]\le c(\theta)^pt^{\theta p} \E\big[d_X(U_1,U_0)^p\big].
\end{equation}

Lemma~\ref{lem:powers of 2} shows that~\eqref{eq: t induction} holds true if $t=2^k$ for some $k\in \N\cup\{0\}$ (since $c(\theta)>1$). So, suppose that $t=(1+s)2^k$ for some $s\in (0,1)$ and $k\in \N\cup \{0\}$. The triangle inequality in $L_p$, combined with the  stationarity of the Markov chain, implies that
\begin{align}\label{eq:first triangle}
\nonumber\left(\E\big[d_X(U_t,U_0)^p\big]\right)^{\frac{1}{p}}&\le \left(\E\big[d_X(U_t,U_{2^k})^p\big]\right)^{\frac{1}{p}}+\left(\E\big[d_X(U_{2^k},U_0)^p\big]\right)^{\frac{1}{p}}\\
&=\left(\E\big[d_X(U_{s2^k},U_0)^p\big]\right)^{\frac{1}{p}}+\left(\E\big[d_X(U_{2^k},U_0)^p\big]\right)^{\frac{1}{p}},
\end{align}
and
\begin{align}\label{eq:second triangle}
\nonumber\left(\E\big[d_X(U_t,U_0)^p\big]\right)^{\frac{1}{p}}&\le \left(\E\big[d_X(U_t,U_{2^{k+1}})^p\big]\right)^{\frac{1}{p}}+\left(\E\big[d_X(U_{2^{k+1}},U_0)^p\big]\right)^{\frac{1}{p}}\\
&=\left(\E\big[d_X(U_{(1-s)2^k},U_0)^p\big]\right)^{\frac{1}{p}}+\left(\E\big[d_X(U_{2^{k+1}},U_0)^p\big]\right)^{\frac{1}{p}}.
\end{align}
By combining~\eqref{eq:first triangle} and~\eqref{eq:second triangle} with Lemma~\ref{lem:powers of 2} and the inductive hypothesis~\eqref{eq: t induction}, we see that
\begin{align*}
\frac{\left(\E\big[d_X(U_t,U_0)^p\big]\right)^{\frac{1}{p}}}{\left(\E\big[d_X(U_1,U_0)^p\big]\right)^{\frac{1}{p}}}&\le 2^{k\theta}\min\left\{c(\theta)s^\theta+1,c(\theta)(1-s)^\theta+2^\theta\right\}
&\stackrel{\eqref{for induction verison c theta}}{\le} 2^{k\theta}c(\theta)(1+s)^\theta=c(\theta)t^\theta.\tag*{\qedhere}
\end{align*}
\end{proof}

\section{Proof of Proposition~\ref{prop:duality}}\label{sec:duality}

Here we justify the validity of Proposition~\ref{prop:duality} that was stated in the Introduction, thus explaining why we are focusing on quadratic inequalities in the context of the quest for intrinsic characterizations of those metric spaces that admit a bi-Lipschitz embedding into some Alexandrov space that is either nonnegatively or nonpositively curved. The argument below is inspired by the proof of Proposition~15.5.2 in~\cite{Mat02}.

\begin{proof}[Proof of Proposition~\ref{prop:duality}]
If $c_Y(X)\le D$ for some $(Y,d_Y)\in \mathscr{F}$ then it follows immediately that if $A,B\in M_n(\R)$ have nonnegative entries and $(Y,d_Y)$ satisfies the $(A,B)$-quadratic metric inequality then $(X,d_X)$ satisfies the $(A,D^2B)$-quadratic metric inequality. The nontrivial direction here is the converse, i.e., suppose that $(X,d_X)$ satisfies the $(A,D^2B)$-quadratic metric inequality for every two $n$ by $n$ matrices $A,B\in M_n(\R)$ with nonnegative entries such that every $(Z,d_Z)\in \mathscr{F}$ satisfies the $(A,B)$-quadratic metric inequality. The goal is to deduce from this that there exists $(Y,d_Y)\in \mathscr{F}$ for which $c_Y(X)\le D$.

 Let $\mathscr{K}\subset M_n(\R)$ be the set of all $n$ by $n$ matrices $C=(c_{ij})$ for which there exists $(Z,d_Z)\in \mathscr{F}$  and $z_1,\ldots,z_n\in Z$ such that $c_{ij}=d_Z(z_i,z_j)^2$ for every $i,j\in \n$. Since $\mathscr{F}$ is closed under dilation, we have $[0,\infty)\mathscr{K}\subset \mathscr{K}$. Since $\mathscr{F}$ is closed under Pythagorean sums, we have $\mathscr{K}+\mathscr{K}\subset \mathscr{K}$. Thus $\mathscr{K}$ is a convex cone.

Write $X=\{x_1,\ldots,x_n\}$. Fix $\e\in (0,1)$ and suppose for the sake of obtaining a contradiction that there does not exist an embedding of $X$ into any member of $\mathscr{F}$ with distortion less than $D+\e$. Let $\mathscr{L}\subset M_n(\R)$ be the set of all $n$ by $n$ symmetric matrices $C=(c_{ij})$ for which there exists $s\in (0,\infty)$ such that $sd_X(i,j)^2\le c_{ij}\le (D+\e)^2sd_X(i,j)^2$ for every $i,j\in \n$. Our contrapositive assumption means that $\mathscr{K}\cap \mathscr{L}=\emptyset$. Since $\mathscr{K}$ and $\mathscr{L}\cup\{0\}$ are both cones, the separation theorem  now implies that there exists a symmetric matrix $H=(h_{ij})\in M_n(\R)$, not all of whose off-diagonal entries vanish, such that
\begin{equation}\label{eq:separation}
\inf_{C\in \mathscr{L}} \sum_{i=1}^n\sum_{j=1}^n h_{ij} c_{ij}\ge 0\ge \sup_{C\in \mathscr{K}} \sum_{i=1}^n\sum_{j=1}^n h_{ij} c_{ij}.
\end{equation}

Define $A,B\in M_n(\R)$ by setting for every $i,j\in \n$,
$$
a_{ij}\eqdef \left\{\begin{array}{ll} h_{ij} &\mathrm{if}\ h_{ij}\ge 0,\\ 0 & \mathrm{if}\ h_{ij}< 0,
\end{array}\right.\qquad\mathrm{and}\qquad b_{ij}\eqdef \left\{\begin{array}{ll} |h_{ij}| &\mathrm{if}\ h_{ij}< 0,\\ 0 & \mathrm{if}\ h_{ij}\ge 0,
\end{array}\right.
$$
The right hand inequality in~\eqref{eq:separation}, combined with the definition of $\mathscr{K}$, implies that every $(Y,d_Y)\in \mathscr{F}$  satisfies the $(A,B)$-quadratic metric inequality. By our assumption on $X$, this implies that
\begin{equation}\label{eq:use AD2B}
 \sum_{i=1}^n \sum_{j=1}^n a_{ij} d_X(x_i,x_j)^2\le D^2 \sum_{i=1}^n \sum_{j=1}^n b_{ij} d_X(x_i,x_j)^2<(D+\e)^2\sum_{i=1}^n \sum_{j=1}^n b_{ij} d_X(x_i,x_j)^2,
\end{equation}
where we used the fact that not all the off-diagonal entries of $H$ vanish, so all the sums appearing~\eqref{eq:use AD2B} are positive. Consequently, if we set
$$
\forall\, i,j\in \n,\qquad c_{ij}\eqdef \left\{\begin{array}{ll} (D+\e)^2d_X(x_i,x_j)^2 &\mathrm{if}\ h_{ij}<0\\ d_X(x_i,x_j)^2 & \mathrm{if}\ h_{ij}\ge 0,
\end{array}\right.
$$
then $C=(c_{ij})\in \mathscr{L}$ and by~\eqref{eq:use AD2B} we have $\sum_{i=1}^n\sum_{j=1}^n h_{ij} c_{ij}<0$. This contradicts the left hand inequality in~\eqref{eq:separation}.
\end{proof}

\section{Subsets of Hadamard spaces}\label{sec:hadamard}

As we discussed in the introduction, it is a major open problem to characterize those finite metric spaces that admit a bi-Lipschitz (or even isometric) embedding into some Hadamard space. By Proposition~\ref{prop:duality}, this amounts to understanding those quadratic metric inequalities that hold true in any Hadamard space. In this section we shall derive potential families of such inequalities.

An equivalent characterization of when a metric space $(X,d_X)$ is a Hadamard space is the requirement that there exists a mapping $\mathfrak{B}$  that assigns a point $\mathfrak{B}(\mu)\in X$ to every finitely supported probability measure $\mu$ on $X$ with the property that $\mathfrak{B}(\delta_x)=x$ for every $x\in X$ (i.e., $\mathfrak{B}$ is a barycenter map) and every finitely supported probability measure $\mu$ on $X$ satisfies the following inequality for every $x\in X$.
\begin{equation}\label{eq:B axiom}
d_X(x,\mathfrak{B}(\mu))^2+\int_X d_X(\mathfrak{B}(\mu),y)^2\d \mu(y)\le \int_X d_X(x,y)^2\d \mu(y).
\end{equation}
For the proof that $(X,d_X)$ is a Hadamard space if and only if it satisfies~\eqref{eq:B axiom}, see e.g. Lemma~4.4. and Theorem~4.9 in~\cite{Stu03}. One could extend the validity of~\eqref{eq:B axiom} to probability measures that are not necessarily finitely supported, but this will be irrelevant for our purposes.

Lemma~\ref{lem:harmonic} below yields a general recipe for producing quadratic metric inequalities that hold true in any Hadamard space.

\begin{lemma}\label{lem:harmonic} Fix $n\in \N$ and $p_1,\ldots,p_n,q_1,\ldots,q_n\in (0,1)$ such that $\sum_{i=1}^np_i=\sum_{j=1}^nq_j=1$. Suppose that $A=(a_{ij}), B=(b_{ij})\in M_n(\R)$ are $n$ by $n$ matrices with nonnegative entries that satisfy
\begin{equation}\label{eq:AB assumption}
\forall\,i,j\in \n,\qquad \sum_{k=1}^n a_{ik}+\sum_{k=1}^n b_{kj}=p_i+q_j.
\end{equation}
If $(X,d_X)$ is a Hadamard space then for every $x_1,\ldots,x_n\in X$ we have
\begin{equation}\label{eq:harmonic mean}
\sum_{i=1}^n\sum_{j=1}^n \frac{a_{ij}b_{ij}}{a_{ij}+b_{ij}}d_X(x_i,x_j)^2\le \sum_{i=1}^n\sum_{j=1}^n p_iq_j d_X(x_i,x_j)^2.
\end{equation}
\end{lemma}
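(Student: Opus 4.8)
The plan is to derive \eqref{eq:harmonic mean} from a single application of the barycenter inequality \eqref{eq:B axiom}. I would fix a Hadamard space $(X,d_X)$ together with a barycenter map $\mathfrak{B}$ as in \eqref{eq:B axiom}, fix $x_1,\dots,x_n\in X$, set $\sigma\eqdef\sum_{j=1}^n q_j\delta_{x_j}$, let $w\eqdef\mathfrak{B}(\sigma)$, and write $V\eqdef\int_X d_X(w,y)^2\,\d\sigma(y)=\sum_{j=1}^n q_j\,d_X(x_j,w)^2$.

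The first step is a purely metric observation that uses neither \eqref{eq:AB assumption} nor any curvature assumption: for every point $w\in X$ and all $i,j\in\n$,
\begin{equation}\label{eq:plan elem}
\frac{a_{ij}b_{ij}}{a_{ij}+b_{ij}}\,d_X(x_i,x_j)^2\ \le\ a_{ij}\,d_X(x_i,w)^2+b_{ij}\,d_X(x_j,w)^2,
\end{equation}
where the left-hand side is read as $0$ when $a_{ij}=b_{ij}=0$. Indeed, writing $u=d_X(x_i,w)$ and $v=d_X(x_j,w)$ one has $u+v\ge d_X(x_i,x_j)$ by the triangle inequality, while when $a_{ij}+b_{ij}>0$,
\[
a_{ij}u^2+b_{ij}v^2-\frac{a_{ij}b_{ij}}{a_{ij}+b_{ij}}(u+v)^2=\frac{(a_{ij}u-b_{ij}v)^2}{a_{ij}+b_{ij}}\ge 0 .
\]
Summing \eqref{eq:plan elem} over all $i,j\in\n$, grouping the right-hand side by the basepoint distances $d_X(x_i,w)^2$, and using the case $j=i$ of \eqref{eq:AB assumption} (namely $\sum_k a_{ik}+\sum_k b_{ki}=p_i+q_i$), I would obtain, for \emph{every} $w\in X$,
\begin{equation}\label{eq:plan basept}
\sum_{i=1}^n\sum_{j=1}^n\frac{a_{ij}b_{ij}}{a_{ij}+b_{ij}}\,d_X(x_i,x_j)^2\ \le\ \sum_{i=1}^n\Bigl(\sum_{k=1}^n a_{ik}+\sum_{k=1}^n b_{ki}\Bigr)d_X(x_i,w)^2\ =\ \sum_{i=1}^n(p_i+q_i)\,d_X(x_i,w)^2 .
\end{equation}

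The second step plugs $w=\mathfrak{B}(\sigma)$ into \eqref{eq:plan basept}. Applying \eqref{eq:B axiom} with $\mu=\sigma$ and $x=x_i$ gives $d_X(x_i,w)^2+V\le\sum_{j=1}^n q_j\,d_X(x_i,x_j)^2$ for each $i\in\n$; multiplying by $p_i$, summing over $i$, and using $\sum_i p_i=1$, one gets $\sum_{i=1}^n p_i\,d_X(x_i,w)^2\le\sum_{i=1}^n\sum_{j=1}^n p_iq_j\,d_X(x_i,x_j)^2-V$. Since $\sum_{i=1}^n q_i\,d_X(x_i,w)^2=V$ by the definition of $V$, the right-hand side of \eqref{eq:plan basept} equals $\sum_{i=1}^n p_i\,d_X(x_i,w)^2+V$, which is therefore at most $\sum_{i,j}p_iq_j\,d_X(x_i,x_j)^2$. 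This is exactly \eqref{eq:harmonic mean}.

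The argument is short once the right objects are chosen, so I do not expect a genuine obstacle; the only subtlety worth flagging is that \eqref{eq:plan basept} holds for an \emph{arbitrary} basepoint $w$ precisely because \eqref{eq:AB assumption} forces the aggregated coefficient of $d_X(x_i,w)^2$ to collapse to $p_i+q_i$ independently of $w$. This is what makes it legitimate to feed the $p_i$-weighted part into the barycenter inequality for $\sigma$ while the $q_i$-weighted part reassembles the variance term $V$ that \eqref{eq:B axiom} supplies, so that the two copies of $V$ cancel.
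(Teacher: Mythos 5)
Your proof is correct and is essentially the paper's own argument: both rest on a single application of the barycentric inequality~\eqref{eq:B axiom}, the collapse of the coefficients of $d_X(x_i,w)^2$ to $p_i+q_i$ via the diagonal case of~\eqref{eq:AB assumption}, and the pointwise bound $\frac{a_{ij}b_{ij}}{a_{ij}+b_{ij}}d_X(x_i,x_j)^2\le a_{ij}d_X(x_i,w)^2+b_{ij}d_X(x_j,w)^2$ (the paper's~\eqref{eq:sharp st} plus the triangle inequality). The only cosmetic differences are that you take the barycenter of the $q$-weighted measure rather than the $p$-weighted one and you establish the arbitrary-basepoint inequality before specializing $w$, whereas the paper proceeds in the reverse order; by the symmetry of the statement this changes nothing.
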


\begin{proof} Writing $z=\mathfrak{B}\Big(\sum_{i=1}^n p_i\delta_{x_i}\Big)$, by~\eqref{eq:B axiom} for every $j\in \n$ we have
\begin{equation}\label{eq:barycenter distance from j}
d_X(x_j,z)^2+\sum_{i=1}^n p_id_X(x_i,z)^2 \le \sum_{i=1}^n p_i d_X(x_i,x_j)^2.
\end{equation}
By multiplying~\eqref{eq:barycenter distance from j} by $q_j$ and summing over $j\in \n$ we get
\begin{equation}\label{eq:summed over qj}
\sum_{j=1}^n q_j d_X(x_j,z)^2+\sum_{i=1}^n p_id_X(x_i,z)^2 \le \sum_{i=1}^n\sum_{j=1}^n p_iq_j d_X(x_i,x_j)^2.
\end{equation}
Hence,
\begin{align}
 \sum_{j=1}^n q_j d_X(x_j,z)^2+\sum_{i=1}^n p_id_X(x_i,z)^2  \label{eq:use AB identity} &=\sum_{i=1}^n\sum_{j=1}^n \left(a_{ij}d_X(x_i,z)^2+b_{ij}d_X(x_j,z)^2\right)\\&\ge
\sum_{i=1}^n\sum_{j=1}^n \frac{a_{ij}b_{ij}}{a_{ij}+b_{ij}}d_X(x_i,x_j)^2,\label{eq:use convesity square}
\end{align}
where in~\eqref{eq:use AB identity} we used~\eqref{eq:AB assumption}, and~\eqref{eq:use convesity square} holds true because    $d_X(x_i,z)+d_X(x_j,z)\ge d_X(x_i,x_j)$ for every $i,j\in \n$, and for every $s,t,\gamma\in [0,\infty)$ we have (by e.g.~Cauchy–-Schwarz),
\begin{equation}\label{eq:sharp st}
\min_{\substack{\alpha,\beta\in [0,\infty)\\ \alpha+\beta\ge \gamma}}\left(s\alpha^2+t\beta^2\right)= \frac{st\gamma^2}{s+t}.
\end{equation}
The desired estimate~\eqref{eq:harmonic mean} is a combination of~\eqref{eq:summed over qj} and~\eqref{eq:use convesity square}.
\end{proof}

The proof of Lemma~\ref{lem:harmonic} is a systematic way to exploit the existence of barycenters in order to deduce quadratic metric inequalities, under the crucial constraint that the final inequality is allowed to involve only distances within the subset  $\{x_1,\ldots,x_n\}\subset X$. The barycentric inequality~\eqref{eq:B axiom} is used in~\eqref{eq:barycenter distance from j}, but one must then remove all reference to the  auxiliary  point $z$ since it need not be part of the given subset $\{x_1,\ldots,x_n\}$. It is natural to do so by incorporating the triangle inequality $d_X(x_i,z)+d_X(x_j,z)\ge d_X(x_i,x_j)$ for some $i,j\in \n$. This inequality is distributed among the possible pairs $i,j\in \n$ through a general choice of re-weighting matrices $A,B$, with the final step in~\eqref{eq:use convesity square} being sharp due to~\eqref{eq:sharp st}. A more general  scheme along these lines will be described in Section~\ref{sec:hierarchy} below, but (an iterative applications of) the above simple scheme is already powerful, and in fact we do not know whether or not it yields a characterization of subsets of Hadamard spaces; see Question~\ref{eq:is this all?} below.

A notable special case of Lemma~\ref{lem:harmonic} is when $(p_1,\ldots,p_n)=(q_1,\ldots,q_n)$ and there exists a permutation $\pi\in S_n$ such that $a_{i\pi(i)}=b_{\pi^{-1}(i)i}=p_i$  for every $i\in \n$, while all the other entries of the matrices $A$ and $B$ vanish. In this case one arrives at the following useful inequality.

\begin{corollary}\label{coro:permutation} Suppose that $(X,d_X)$ is a Hadamard space. Then for every $n\in \N$, every $x_1,\ldots,x_n$, every $p_1,\ldots,p_n\in [0,1]$ with $\sum_{j=1}^np_j=1$ and every permutation $\pi\in S_n$ we have
\begin{equation}\label{eq:permutation version}
\sum_{i=1}^n \frac{p_ip_{\pi(i)}}{p_i+p_{\pi(i)}}d_X(x_i,x_{\pi(i)})^2\\\le \sum_{i=1}^n\sum_{j=1}^np_ip_j d_X(x_i,x_j)^2.
\end{equation}
\end{corollary}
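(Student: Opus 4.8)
The plan is to derive Corollary~\ref{coro:permutation} directly from Lemma~\ref{lem:harmonic} by making the special choice of data outlined in the paragraph preceding the corollary. First I would set $q_i=p_i$ for every $i\in\n$, and I would choose the matrices $A=(a_{ij})$ and $B=(b_{ij})$ to be supported on the graph of the permutation $\pi$: namely $a_{i\pi(i)}\eqdef p_i$ for every $i\in\n$, $b_{\pi^{-1}(i)i}\eqdef p_i$ for every $i\in\n$ (equivalently $b_{ij}=p_i$ when $j=\pi(i)$), and all other entries of $A$ and $B$ equal to zero. With this choice both $A$ and $B$ have nonnegative entries, so the hypotheses of Lemma~\ref{lem:harmonic} that need checking are the linear constraints~\eqref{eq:AB assumption}.

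The key verification is that~\eqref{eq:AB assumption} holds, i.e., that $\sum_{k=1}^n a_{ik}+\sum_{k=1}^n b_{kj}=p_i+q_j=p_i+p_j$ for all $i,j\in\n$. For the first sum, the only nonzero entry in row $i$ of $A$ is $a_{i\pi(i)}=p_i$, so $\sum_{k=1}^n a_{ik}=p_i$. For the second sum, the only nonzero entry in column $j$ of $B$ is $b_{\pi^{-1}(j)j}=p_j$, so $\sum_{k=1}^n b_{kj}=p_j$. Adding these gives exactly $p_i+p_j$, as required, so~\eqref{eq:AB assumption} is satisfied for every choice of $\pi\in S_n$ and every probability vector $(p_1,\dots,p_n)$.

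It remains to read off the conclusion. In the left-hand side of~\eqref{eq:harmonic mean}, the term $\frac{a_{ij}b_{ij}}{a_{ij}+b_{ij}}$ is nonzero precisely for those pairs $(i,j)$ for which both $a_{ij}>0$ and $b_{ij}>0$; since $a_{ij}\ne 0$ forces $j=\pi(i)$ and $b_{ij}\ne 0$ forces $i=\pi^{-1}(j)$, both conditions together are equivalent to $j=\pi(i)$, in which case $a_{ij}=b_{ij}=p_i$ and hence $\frac{a_{ij}b_{ij}}{a_{ij}+b_{ij}}=\frac{p_i^2}{2p_i}$. This does not immediately look like the claimed coefficient $\frac{p_ip_{\pi(i)}}{p_i+p_{\pi(i)}}$, so the one genuinely substantive point is to observe that over the full sum $\sum_{i=1}^n$ one may symmetrize: pairing the index $i$ with $\pi^{-1}(i)$ (or equivalently relabeling) converts $\sum_i \frac{p_i^2}{2p_i}d_X(x_i,x_{\pi(i)})^2$ into $\sum_i \frac{p_ip_{\pi(i)}}{p_i+p_{\pi(i)}}d_X(x_i,x_{\pi(i)})^2$ after averaging the two representations of each edge coefficient; concretely, $\tfrac12\big(\tfrac{p_i}{2}+\tfrac{p_{\pi(i)}}{2}\big)$ summed against $d_X(x_i,x_{\pi(i)})^2$ equals $\sum_i\frac{p_ip_{\pi(i)}}{p_i+p_{\pi(i)}}d_X(x_i,x_{\pi(i)})^2$ only when $p_i=p_{\pi(i)}$, so the cleanest route is instead to apply Lemma~\ref{lem:harmonic} with the slightly more balanced choice $a_{i\pi(i)}=b_{i\pi(i)}=\frac{p_ip_{\pi(i)}}{p_i+p_{\pi(i)}}\cdot\frac{p_i+p_{\pi(i)}}{p_i}$, adjusted so that~\eqref{eq:AB assumption} still holds; I would check that taking $a_{i\pi(i)}$ and $b_{i\pi(i)}$ proportional to $p_i$ and $p_{\pi(i)}$ respectively with the normalization forced by~\eqref{eq:AB assumption} yields harmonic mean $\frac{p_ip_{\pi(i)}}{p_i+p_{\pi(i)}}$ exactly. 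The right-hand side of~\eqref{eq:harmonic mean} is $\sum_{i=1}^n\sum_{j=1}^n p_iq_j d_X(x_i,x_j)^2=\sum_{i=1}^n\sum_{j=1}^n p_ip_j d_X(x_i,x_j)^2$, which is precisely the right-hand side of~\eqref{eq:permutation version}. The main obstacle, then, is purely bookkeeping: arranging the support of $A$ and $B$ on the permutation graph so that the constraint~\eqref{eq:AB assumption} is met while the resulting harmonic-mean coefficient comes out to be exactly $\frac{p_ip_{\pi(i)}}{p_i+p_{\pi(i)}}$ rather than a symmetrized approximation of it.
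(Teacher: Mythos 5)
Your overall strategy is exactly the paper's: apply Lemma~\ref{lem:harmonic} with $q_i=p_i$ and with $A,B$ supported on the graph of $\pi$. But the proposal derails on an indexing error, and as written it does not actually complete the argument. The condition $b_{\pi^{-1}(i)i}=p_i$ for all $i$ does \emph{not} translate to ``$b_{ij}=p_i$ when $j=\pi(i)$''; substituting $i'=\pi^{-1}(i)$ it says $b_{i'\pi(i')}=p_{\pi(i')}$, i.e.\ $b_{ij}=p_{\pi(i)}=p_j$ on the support $j=\pi(i)$. Your parenthetical mistranslation is what produces the spurious coefficient $a_{ij}b_{ij}/(a_{ij}+b_{ij})=p_i^2/(2p_i)$, and the subsequent symmetrization detour (which you correctly note fails unless $p_i=p_{\pi(i)}$) is chasing a problem that does not exist. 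Note also that under your mistranslated $B$ the column sums would be $\sum_k b_{kj}=p_{\pi^{-1}(j)}\neq p_j$ in general, so your verification of~\eqref{eq:AB assumption} is only valid for the correct choice of $B$, not the one you then use to compute the left-hand side.

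With the correct reading everything closes in one line: take $a_{i\pi(i)}=p_i$, $b_{i\pi(i)}=p_{\pi(i)}$, and all other entries zero. Then $\sum_k a_{ik}=p_i$ and $\sum_k b_{kj}=b_{\pi^{-1}(j)j}=p_j=q_j$, so~\eqref{eq:AB assumption} holds, and on the support
$$\frac{a_{i\pi(i)}b_{i\pi(i)}}{a_{i\pi(i)}+b_{i\pi(i)}}=\frac{p_ip_{\pi(i)}}{p_i+p_{\pi(i)}},$$
which is exactly the coefficient in~\eqref{eq:permutation version}; no symmetrization or rebalancing is needed. Your final paragraph gestures at precisely this choice (``$a_{i\pi(i)}$ and $b_{i\pi(i)}$ proportional to $p_i$ and $p_{\pi(i)}$ respectively'') but leaves the verification as ``I would check,'' so the proof is incomplete as submitted; carrying out the two displayed computations above is all that is missing. (A cosmetic point: Lemma~\ref{lem:harmonic} assumes $p_i\in(0,1)$ while the corollary allows $p_i\in[0,1]$; the boundary cases follow by continuity or by discarding indices with $p_i=0$.)
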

When $n=4$ and $\pi=(1,3)(2,4)$, Corollary~\ref{coro:permutation} becomes

\begin{corollary}\label{coro:three degrees of freedom} Suppose that $(X,d_X)$ be a Hadamard space and fix $x_1,x_2,x_3,x_4\in X$. Then for every $p_1,p_2,p_3,p_4\in [0,\infty)$ we have
\begin{multline}\label{eq:weighted quadruple}
p_1p_2 d_X(x_1,x_2)^2+p_2p_3d_X(x_2,x_3)^2+p_3p_4d_X(x_3,x_4)^2+p_4p_1d_X(x_4,x_1)^2\\\ge
\frac{p_1p_3(p_2+p_4)}{p_1+p_3}d_X(x_1,x_3)^2+\frac{p_2p_4(p_1+p_3)}{p_2+p_4}d_X(x_2,x_4)^2.
\end{multline}
\end{corollary}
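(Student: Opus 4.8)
The plan is to derive \eqref{eq:weighted quadruple} directly from Corollary~\ref{coro:permutation}, specialized to $n=4$ and the permutation $\pi=(1\,3)(2\,4)\in S_4$. Before invoking Corollary~\ref{coro:permutation} one must normalize, since that corollary is stated for probability vectors: both sides of \eqref{eq:weighted quadruple} are homogeneous of degree~$2$ in $(p_1,p_2,p_3,p_4)$, and, with the convention that a term $\tfrac{0\cdot 0}{0}$ is read as $0$, both sides are continuous in $(p_1,p_2,p_3,p_4)\in[0,\infty)^4$ because $\tfrac{p_1p_3}{p_1+p_3}\le\min\{p_1,p_3\}\to 0$ as $\min\{p_1,p_3\}\to 0$. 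Hence it suffices to treat the case $p_1,p_2,p_3,p_4>0$ with $p_1+p_2+p_3+p_4=1$; the remaining cases follow by continuity, and the fully degenerate ones ($p_1=p_3=0$ or $p_2=p_4=0$) make both sides of \eqref{eq:weighted quadruple} vanish.

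For such a probability vector, Corollary~\ref{coro:permutation} applied with $\pi=(1\,3)(2\,4)$ gives \eqref{eq:permutation version}. Since $\pi$ swaps $1\leftrightarrow 3$ and $2\leftrightarrow 4$, the left-hand side of \eqref{eq:permutation version} equals $\tfrac{2p_1p_3}{p_1+p_3}d_X(x_1,x_3)^2+\tfrac{2p_2p_4}{p_2+p_4}d_X(x_2,x_4)^2$, and the right-hand side equals $2\sum_{1\le i<j\le 4}p_ip_j\,d_X(x_i,x_j)^2$ because the diagonal terms vanish. Dividing by~$2$ and moving the $d_X(x_1,x_3)^2$ and $d_X(x_2,x_4)^2$ summands from the right-hand side to the left leaves there the coefficients $\tfrac{p_1p_3}{p_1+p_3}-p_1p_3$ and $\tfrac{p_2p_4}{p_2+p_4}-p_2p_4$. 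Using $p_1+p_2+p_3+p_4=1$, i.e.\ $1-p_1-p_3=p_2+p_4$ and $1-p_2-p_4=p_1+p_3$, these simplify to $\tfrac{p_1p_3(p_2+p_4)}{p_1+p_3}$ and $\tfrac{p_2p_4(p_1+p_3)}{p_2+p_4}$, while the right-hand side that remains is exactly $p_1p_2\,d_X(x_1,x_2)^2+p_2p_3\,d_X(x_2,x_3)^2+p_3p_4\,d_X(x_3,x_4)^2+p_4p_1\,d_X(x_4,x_1)^2$. This is precisely \eqref{eq:weighted quadruple}.

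I do not expect a serious obstacle here: all the geometric content sits in Lemma~\ref{lem:harmonic} and its corollary, and the present statement is just the instance $n=4$, $\pi=(1\,3)(2\,4)$ combined with the elementary identity $\tfrac{1}{p_1+p_3}-1=\tfrac{p_2+p_4}{p_1+p_3}$. The only mildly delicate point is the passage from a probability vector to an arbitrary $(p_1,p_2,p_3,p_4)\in[0,\infty)^4$, which is handled by the homogeneity-and-continuity observation in the first paragraph (including the $0/0$ convention for the two weights on the left).
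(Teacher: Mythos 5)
Your proposal is correct and follows essentially the same route as the paper: normalize by homogeneity to $p_1+p_2+p_3+p_4=1$, apply Corollary~\ref{coro:permutation} with $\pi=(1\,3)(2\,4)$, and subtract the $d_X(x_1,x_3)^2$ and $d_X(x_2,x_4)^2$ terms from the right-hand side, using $1-p_1-p_3=p_2+p_4$ and $1-p_2-p_4=p_1+p_3$. Your extra care with the degenerate cases $p_1=p_3=0$ or $p_2=p_4=0$ (via the $0/0$ convention and continuity) is a minor point the paper leaves implicit, but it does not change the argument.
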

To pass from~\eqref{eq:permutation version} to~\eqref{eq:weighted quadruple} note that~\eqref{eq:weighted quadruple} is homogeneous of order $2$ in $(p_1,p_2,p_3,p_4)$, so we may assume that $p_1+p_2+p_3+p_4=1$. Now~\eqref{eq:weighted quadruple} is a direct application of~\eqref{eq:permutation version} with the above specific choice of permutation $\pi$, while subtracting from both sides of~\eqref{eq:permutation version} those multiples of $d_X(x_1,x_3)^2$ and $d_X(x_2,x_4)^2$  that appear in the right hand side of~\eqref{eq:permutation version}.

When $p_1+p_3=p_2+p_4=1$, Corollary~\ref{coro:three degrees of freedom} becomes Sturm's weighted quadruple inequality~\cite{Stu03}, which asserts that for every Hadamard space $(X,d_X)$, every $x_1,x_2,x_3,x_4\in X$, and every $s,t\in [0,1]$,
\begin{align}\label{eq:sturm weighted quadruple}
\nonumber &s(1-s)d_X(x_1,x_3)^2+t(1-t)d_X(x_2,x_4)^2\\  &\le std_X(x_1,x_2)^2+(1-s)td_X(x_2,x_3)^2 +(1-s)(1-t)d_X(x_3,x_4)^2+s(1-t)d_X(x_4,x_1)^2.
\end{align}
As explained in~\cite[Proposition~2.4]{Stu03}, by choosing the parameters $s,t$ appropriately in~\eqref{eq:weighted quadruple} one obtains an important quadruple comparison inequality of Reshetnyak~\cite{Res68} (see also~\cite{Jos97} or~\cite[Lemma~2.1]{LPS00}), asserting that for every Hadamard space $(X,d_X)$ and every $x_1,x_2,x_3,x_4\in X$,
\begin{equation}\label{eq:Reshetnyak}
d_X(x_1,x_3)^2+d_X(x_2,x_4)^2 \le d_X(x_1,x_2)^2+d_X(x_2,x_3)^2+2d_X(x_3,x_4)d_X(x_4,x_1).
\end{equation}

The coefficients in~\eqref{eq:weighted quadruple} have $3$ degrees of freedom while in~\eqref{eq:sturm weighted quadruple} they have $2$ degrees of freedom. This additional flexibility yields a proof of the validity of the Ptolemy inequality~\eqref{eq:potelmy intro} in Hadamard spaces. The fact that the Ptolemy inequality holds true in Hadamard spaces was proved in~\cite{FLS07}, and an alternative proof was given in~\cite{BFW09}. Both of these proofs rely on comparisons with ideal configurations in the Euclidean plane (see~\cite[\S II.1]{BH99}), combined with the classical Ptolemy theorem in Euclidean geometry. Corollary~\ref{coro:potlemy} below shows how the Ptolemy inequality is a direct consequence of~\eqref{eq:weighted quadruple}, thus yielding an intrinsic proof that does not proceed through an embedding argument.

\begin{corollary}\label{coro:potlemy}
Let $(X,d_X)$ be a Hadamard space and $x_1,x_2,x_3,x_4\in X$. Write $d_{ij}=d_X(x_i,x_j)$ for every $i,j\in \n$. Then
\begin{equation}\label{ptolmey with defect}
d_{12}d_{34}+d_{23}d_{41}-d_{13}d_{24}
\ge \frac{\big(\left(d_{12}d_{23}+d_{34}d_{41}\right)d_{13}-\left(d_{12}d_{41}+d_{23}d_{34}\right)d_{24}\big)^2}{2\left(d_{12}d_{41}+d_{23}d_{34}\right)
\left(d_{12}d_{23}+d_{34}d_{41}\right)}\ge 0.
\end{equation}
\end{corollary}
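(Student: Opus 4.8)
The plan is to deduce~\eqref{ptolmey with defect} from the weighted quadruple inequality~\eqref{eq:weighted quadruple} of Corollary~\ref{coro:three degrees of freedom} by a judicious choice of the nonnegative weights $p_1,p_2,p_3,p_4$. The guiding observation is that~\eqref{eq:weighted quadruple} hides two AM--GM estimates: grouping the four left-hand terms as $(p_1p_2 d_{12}^2+p_3p_4 d_{34}^2)+(p_2p_3 d_{23}^2+p_4p_1 d_{41}^2)$, each group is at least $2\sqrt{p_1p_2p_3p_4}$ times $d_{12}d_{34}$, respectively $d_{23}d_{41}$; and on the right-hand side, $\frac{p_1p_3(p_2+p_4)}{p_1+p_3}d_{13}^2+\frac{p_2p_4(p_1+p_3)}{p_2+p_4}d_{24}^2$ is at least $2\sqrt{p_1p_2p_3p_4}\,d_{13}d_{24}$. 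So one should choose the $p_i$ to make both of the left-hand AM--GM's equalities, thereby turning the left-hand side of~\eqref{eq:weighted quadruple} into an exact multiple of $d_{12}d_{34}+d_{23}d_{41}$, and letting the (automatically quantified) slack in the right-hand AM--GM become the Ptolemaic defect.

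Carrying this out: assume first that $x_1,\dots,x_4$ are pairwise distinct, so that all $d_{ij}>0$, and apply~\eqref{eq:weighted quadruple} with
\[
p_1=d_{23}d_{34},\qquad p_2=d_{34}d_{41},\qquad p_3=d_{12}d_{41},\qquad p_4=d_{12}d_{23}.
\]
A short direct computation --- the only calculation in the argument --- shows that, with $P\eqdef d_{12}d_{23}d_{34}d_{41}$, one has $p_1p_2 d_{12}^2=p_3p_4 d_{34}^2=P\,d_{12}d_{34}$, $p_2p_3 d_{23}^2=p_4p_1 d_{41}^2=P\,d_{23}d_{41}$, and also $p_1p_3=p_2p_4=P$. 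Hence the left-hand side of~\eqref{eq:weighted quadruple} equals $2P\bigl(d_{12}d_{34}+d_{23}d_{41}\bigr)$, and, writing $u\eqdef p_1+p_3=d_{12}d_{41}+d_{23}d_{34}$ and $v\eqdef p_2+p_4=d_{12}d_{23}+d_{34}d_{41}$, its right-hand side equals $P\bigl(\tfrac{v}{u}d_{13}^2+\tfrac{u}{v}d_{24}^2\bigr)$.

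Dividing through by $P>0$ and invoking the elementary identity $\tfrac{v}{u}d_{13}^2+\tfrac{u}{v}d_{24}^2=2d_{13}d_{24}+\tfrac{(vd_{13}-ud_{24})^2}{uv}$ immediately gives
\[
d_{12}d_{34}+d_{23}d_{41}-d_{13}d_{24}\ \ge\ \frac{(vd_{13}-ud_{24})^2}{2uv},
\]
which, after substituting the values of $u$ and $v$, is literally~\eqref{ptolmey with defect} (the trailing ``$\ge 0$'' there being obvious). I do not anticipate a genuine obstacle here: the whole argument rests on guessing the weights $p_i$, and that guess is forced by the requirement that the two left-hand AM--GM bounds be simultaneously tight; everything else is a one-line symmetric-polynomial verification. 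The only point needing a separate word is the degenerate case in which some of the $x_i$ coincide and one of $P,u,v$ vanishes --- then either a denominator in~\eqref{ptolmey with defect} is zero (so there is nothing to prove) or the inequality collapses to a direct consequence of the triangle inequality, and in any event it follows from the non-degenerate case by continuity.
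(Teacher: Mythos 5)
Your proof is correct and takes essentially the same route as the paper: you apply Corollary~\ref{coro:three degrees of freedom} with weights that differ from the paper's choice only by independent rescalings of $(p_1,p_3)$ and of $(p_2,p_4)$, under which~\eqref{eq:weighted quadruple} is invariant (both sides are bi-homogeneous in these pairs), so you arrive at literally the same instance of the inequality, and the concluding completion of the square is identical. Your explicit handling of the degenerate case where some $d_{ij}=0$ is a small addition that the paper leaves implicit.
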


\begin{proof} The proof of~\eqref{ptolmey with defect} is nothing more than an application of Corollary~\ref{coro:three degrees of freedom} with the following specific choices of $p_1,p_2,p_3,p_4\in [0,\infty)$.
$$
p_1\eqdef \frac{d_{34}}{d_{41}}\cdot \frac{d_{23}+d_{41}}{d_{12}+d_{34}},\quad p_2\eqdef \frac{d_{41}}{d_{12}}\cdot \frac{d_{12}+d_{34}}{d_{23}+d_{41}},\quad p_3\eqdef \frac{d_{12}}{d_{23}}\cdot \frac{d_{23}+d_{41}}{d_{12}+d_{34}},\quad p_4\eqdef \frac{d_{23}}{d_{34}}\cdot \frac{d_{12}+d_{34}}{d_{23}+d_{41}}.
$$
A substitution of these values into~\eqref{eq:weighted quadruple} yields
\begin{align*}
2d_{12}d_{34}+2d_{23}d_{41}&\ge \frac{d_{12}d_{23}+d_{34}d_{41}}{d_{12}d_{41}+d_{23}d_{34}}d_{13}^2
+\frac{d_{12}d_{41}+d_{23}d_{34}}{d_{12}d_{23}+d_{34}d_{41}}d_{24}^2\\
&=2d_{13}d_{24}+\frac{\left(\left(d_{12}d_{23}+d_{34}d_{41}\right)d_{13}-\left(d_{12}d_{41}+d_{23}d_{34}\right)d_{24}\right)^2}{\left(d_{12}d_{41}+d_{23}d_{34}\right)
\left(d_{12}d_{23}+d_{34}d_{41}\right)}.\tag*{\qedhere}
\end{align*}
\end{proof}

\subsection{Iterative applications of Lemma~\ref{lem:harmonic}}\label{sec:iteration}  The case $s=t=1/2$ of~\eqref{eq:weighted quadruple} becomes the roundness $2$ inequality~\eqref{eq:roundness 2}, i.e., for every Hadamard space $(X,d_X)$ and every $x_1,x_2,x_3,x_4\in X$ we have
\begin{equation}\label{eq:roundness 2 in section}
d_X(x_1,x_3)^2+d_X(x_2,x_4)^2\le d_X(x_1,x_2)^2+d_X(x_2,x_3)^2+d_X(x_3,x_4)^2+d_X(x_4,x_1)^2.
\end{equation}
In~\cite{Enf76}, Enflo iterated~\eqref{eq:roundness 2 in section} (while exploiting cancellations) so as to yield the following inequality, which holds for every Hadamard space $(X,d_X)$, every $n\in \N$ and every $f:\{-1,1\}^n\to X$.
\begin{equation}\label{eq:enf type}
\sum_{x\in \{-1,1\}^n} d_X\left(f(x),f(-x)\right)^2\le \sum_{i=1}^n\sum_{x\in \{-1,1\}^n} d_X\left(f(x),f(x_1,\ldots,x_{i-1},-x_i,x_{i+1},\ldots,x_n)\right)^2.
\end{equation}
In today's terminology~\eqref{eq:enf type} says that every Hadamard space has Enflo type $2$ with constant $1$ (see also~\cite{OP09}). The argument in~\cite{LN04} yields a different iterative application of~\eqref{eq:roundness 2 in section} (again, exploiting cancellations via a telescoping argument), showing that mappings from the iterated diamond graph (see~\cite{NR03}) into any Hadamard space satisfy a certain quadratic metric inequality. Similar reasoning (as in~\cite{LMN05}) yields a quadratic metric inequality for  Hadamard space-valued mappings on the Laakso graphs (see~\cite{Laa02,LP01}). The value of the above iterative applications of~\eqref{eq:roundness 2 in section} is that they yield inequalities on metric spaces of unbounded cardinality (hypercubes, diamond graphs, Laakso graphs) that serve as obstructions to bi-Lipschitz embeddings of these spaces into any Hadamard space: these inequalities imply that any such embedding must incur distortion that tends to $\infty$ as the size of the underlying space tends to $\infty$ (in fact, these inequalities yield sharp bounds).

We therefore see that by applying Lemma~\ref{lem:harmonic} multiple times one could obtain quadratic metric inequalities that yield severe restrictions on those metric spaces that admit a bi-Lipschits embedding into some Hadamard space. Specifically, one could apply Lemma~\ref{lem:harmonic}  to several configurations of points and several choices of weights, and consider a weighted average of the resulting inequalities. This yields the estimate
 \begin{equation}\label{eq:summed version}
\sum_{i=1}^n\sum_{j=1}^n \sum_{k=1}^m \frac{c_ka_{ij}^kb_{ij}^k}{a_{ij}^k+b_{ij}^k}d_X(x_i,x_j)^2\le \sum_{i=1}^n\sum_{j=1}^n \sum_{k=1}^m c_k p_i^kq_j^k d_X(x_i,x_j)^2,
\end{equation}
 which is valid for every Hadamard space $(X,d_X)$, every $m,n\in \N$, every $x_1,\ldots,x_n\in X$, every $\{c_k\}_{k=1}^m\subset (0,\infty)$, every
 $\big\{p_i^k,q_i^k:\ i\in \n,\ k\in \{1,\ldots,m\}\big\}\subset (0,\infty)$ with
 \begin{equation}\label{eq:pik}
 \sum_{i=1}^np_i^k=\sum_{j=1}^nq_j^k=1,
 \end{equation}
  and every choice of $n$ by $n$ matrices  $\{A_k=(a_{ij}^k)\}_{k=1}^m,\{B_k=(b_{ij}^k)\}_{k=1}^m\subset M_n(\R)$ with nonnegative entries,  such that for every $i,j\in \n$ and $k\in \{1,\ldots,m\}$,
\begin{equation}\label{eq:AkBk}
\sum_{s=1}^n a_{is}^k+\sum_{s=1}^n b_{sj}^k=p_i^k+q_j^k.
\end{equation}

By collecting terms in~\eqref{eq:summed version} so that for every $i,j\in \n$ no multiple of $d_X(x_i,x_j)^2$ appears in both sides of the inequality, as was done in e.g.~\eqref{eq:enf type}, one arrives at the following estimate.
\begin{multline}\label{eq:collect terms2}
\sum_{\substack{i,j\in \n\\ \sum_{k=1}^mc_k\big(\frac{a_{ij}^kb_{ij}^k}{a_{ij}^k+b_{ij}^k}- p_i^kq_j^k\big)>0}}
\sum_{k=1}^mc_k\bigg(\frac{a_{ij}^kb_{ij}^k}{a_{ij}^k+b_{ij}^k}- p_i^kq_j^k\bigg) d_X(x_i,x_j)^2\\\le
\sum_{\substack{i,j\in \n\\ \sum_{k=1}^mc_k\big(\frac{a_{ij}^kb_{ij}^k}{a_{ij}^k+b_{ij}^k}- p_i^kq_j^k\big)<0}}
\sum_{k=1}^m c_k\bigg(p_i^kq_j^k-\frac{a_{ij}^kb_{ij}^k}{a_{ij}^k+b_{ij}^k}\bigg) d_X(x_i,x_j)^2.
\end{multline}

To the best of our knowledge, all of the previously used quadratic metric inequalities on general Hadamard spaces are of the form~\eqref{eq:collect terms2}. We therefore ask whether the inequalities of the form~\eqref{eq:collect terms2} capture the totality of those quadratic metric inequalities that are valid in Hadamard spaces.

\begin{question}\label{eq:is this all?}
Is it true that for every $D\in [1,\infty)$ there exists some $c(D)\in [1,\infty)$ such that a metric space $(X,d_X)$ embeds with distortion at most $c(D)$ into some Hadamard space provided
\begin{multline*}
\sum_{\substack{i,j\in \n\\ \sum_{k=1}^mc_k\big(\frac{a_{ij}^kb_{ij}^k}{a_{ij}^k+b_{ij}^k}- p_i^kq_j^k\big)>0}}
\sum_{k=1}^mc_k\bigg(\frac{a_{ij}^kb_{ij}^k}{a_{ij}^k+b_{ij}^k}- p_i^kq_j^k\bigg) d_X(x_i,x_j)^2\\\le D^2\cdot
\sum_{\substack{i,j\in \n\\ \sum_{k=1}^mc_k\big(\frac{a_{ij}^kb_{ij}^k}{a_{ij}^k+b_{ij}^k}- p_i^kq_j^k\big)<0}}
\sum_{k=1}^m c_k\bigg(p_i^kq_j^k-\frac{a_{ij}^kb_{ij}^k}{a_{ij}^k+b_{ij}^k}\bigg) d_X(x_i,x_j)^2,
\end{multline*}
for all $m,n\in \N$,  all $c_k, p_i^k, q_i^k, a_{ij}^k, b_{ij}^k\in [0,\infty)$ satisfying~\eqref{eq:pik} and~\eqref{eq:AkBk}, and all $x_1,\ldots,x_n\in X$?
\end{question}

Recall that there are useful metric inequalities, which are not quadratic metric inequalities, that hold true in any Hadamard space, such as Reshetnyak's inequality~\eqref{eq:Reshetnyak} or the Ptolemy inequality~\eqref{eq:potelmy intro}. However, we already know through Proposition~\ref{prop:duality} that quadratic metric inequalities fully characterize subsets of Hadamard spaces. And, in the case of Reshetnyak's inequality or the   Ptolemy inequality, we have seen above how to deduce them explicitly from a quadratic metric inequality (the key point to note here is that the various coefficients that appear in~\eqref{eq:collect terms2} can be optimized so as to depend on the distances $\{d_X(x_i,x_j)\}_{i,j\in \n}$).


A negative answer to  Question~\ref{eq:is this all?} would be very interesting, as it would yield a new family of metric spaces that fail to admit a bi-Lipschitz embedding into any Hadamard space, and correspondingly a new family of quadratic metric inequalities which hold true in any Hadamard space yet do not follow from the above procedure for obtaining such inequalities.

As discussed in the Introduction, it is not known whether or not  for every metric space $(X,d_X)$ there exists a Hadamard space $(Y,d_Y)$ with $c_Y(X,\sqrt{d_X})<\infty$. If this were true then Question~\ref{eq:sqrt of metric in hadamard} below would have a positive answer. Conversely, a positive answer to both Question~\ref{eq:is this all?} and Question~\ref{eq:sqrt of metric in hadamard} would imply that the 1/2-snowflake of any metric space admits a bi-Lipschitz embedding into some Hadamard space.

\begin{question}\label{eq:sqrt of metric in hadamard}
Is it true that every metric space $(X,d_X)$ satisfies
\begin{multline*}
\sum_{\substack{i,j\in \n\\ \sum_{k=1}^mc_k\big(\frac{a_{ij}^kb_{ij}^k}{a_{ij}^k+b_{ij}^k}- p_i^kq_j^k\big)>0}}
\sum_{k=1}^mc_k\bigg(\frac{a_{ij}^kb_{ij}^k}{a_{ij}^k+b_{ij}^k}- p_i^kq_j^k\bigg) d_X(x_i,x_j)\\\lesssim
\sum_{\substack{i,j\in \n\\ \sum_{k=1}^mc_k\big(\frac{a_{ij}^kb_{ij}^k}{a_{ij}^k+b_{ij}^k}- p_i^kq_j^k\big)<0}}
\sum_{k=1}^m c_k\bigg(p_i^kq_j^k-\frac{a_{ij}^kb_{ij}^k}{a_{ij}^k+b_{ij}^k}\bigg) d_X(x_i,x_j),
\end{multline*}
for all $m,n\in \N$,  all $c_k, p_i^k, q_i^k, a_{ij}^k, b_{ij}^k\in [0,\infty)$ satisfying~\eqref{eq:pik} and~\eqref{eq:AkBk}, and all $x_1,\ldots,x_n\in X$?
\end{question}
Question~\ref{eq:sqrt of metric in hadamard} seems tractable, but at present we do not know whether or not its answer is positive. A negative answer to Question~\ref{eq:sqrt of metric in hadamard} would yield for the first time a metric space $(X,d_X)$ such that $(X,\sqrt{d_X})$ fails to admit a bi-Lipschitz embedding into any Hadamard space, in sharp contrast to the case of embeddings into Alexandrov spaces of nonnegative curvature. In the same vein, a proof that every Hadamard space admits a sequence of bounded degree expanders would resolve Question~\ref{eq:sqrt of metric in hadamard} negatively. It is true that inequality~\eqref{eq:weighted quadruple} is not an obstruction to the validity of Question~\ref{eq:sqrt of metric in hadamard}, i.e., for every metric space $(X,d_X)$, every $p_1,p_2,p_3,p_4\in [0,\infty)$ and every $x_1,x_2,x_3,x_4\in X$ we have
\begin{multline}\label{eq:weighted quadruple no squares}
p_1p_2 d_X(x_1,x_2)+p_2p_3d_X(x_2,x_3)+p_3p_4d_X(x_3,x_4)+p_4p_1d_X(x_4,x_1)\\\ge
\frac{p_1p_3(p_2+p_4)}{p_1+p_3}d_X(x_1,x_3)+\frac{p_2p_4(p_1+p_3)}{p_2+p_4}d_X(x_2,x_4).
\end{multline}
Also (actually, as a consequence of~\eqref{eq:weighted quadruple no squares}), Reshetnyak's inequality and the   Ptolemy inequality hold true in any square root of a metric space, i.e., for every metric space $(X,d_X)$ and $x_1,x_2,x_3,x_4\in X$,
\begin{equation*}\label{eq:Reshetnyak root}
d_X(x_1,x_3)+d_X(x_2,x_4) \le d_X(x_1,x_2)+d_X(x_2,x_3)+2\sqrt{d_X(x_3,x_4)d_X(x_4,x_1)},
\end{equation*}
and
\begin{equation*}\label{eq:Ptolemy root}
\sqrt{d_X(x_1,x_3)d_X(x_2,x_4)}\le \sqrt{d_X(x_1,x_2)d_X(x_3,x_4)}+\sqrt{d_X(x_2,x_3)d_X(x_4,x_1)}.
\end{equation*}
It is possible (and instructive) to prove these inequalities while using only the triangle inequality, but this seems to require a somewhat tedious case analysis. Alternatively, one could verify~\eqref{eq:weighted quadruple no squares} by using the  fact that the square root of any four-point metric space admits an isometric embedding into a Hilbert space; see e.g.~\cite[Proposition~2.6.2]{CF94}.

Lemma~\ref{lem:factor 3} below asserts that the conclusion of Lemma~\ref{lem:harmonic} holds true in any square-root of a metric space, with a loss of a constant factor. This is a special case of Question~\ref{eq:sqrt of metric in hadamard} that falls sort of a positive answer in general due to the fact that we want to iterate the resulting inequality, in which case the constant factor loss could accumulate.

\begin{lemma}\label{lem:factor 3}
Fix $n\in \N$ and $p_1,\ldots,p_n,q_1,\ldots,q_n\in (0,1)$ such that $\sum_{i=1}^np_i=\sum_{j=1}^nq_j=1$. Suppose that $A=(a_{ij}), B=(b_{ij})\in M_n(\R)$ are $n$ by $n$ matrices with nonnegative entries that satisfy~\eqref{eq:AB assumption}. Then for every metric space $(X,d_X)$ and every $x_1,\ldots,x_n\in X$ we have
\begin{equation*}
\sum_{i=1}^n\sum_{j=1}^n \frac{a_{ij}b_{ij}}{a_{ij}+b_{ij}}d_X(x_i,x_j)\le 3\sum_{i=1}^n\sum_{j=1}^n p_iq_j d_X(x_i,x_j).
\end{equation*}
\end{lemma}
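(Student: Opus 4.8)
The plan is to prove the estimate directly, bypassing barycenters altogether: the point is that $(X,\sqrt{d_X})$ need not be a Hadamard space, so Lemma~\ref{lem:harmonic} is not available for it, and instead I would funnel every distance through one cleverly chosen point among $x_1,\dots,x_n$, which is precisely what produces the constant $3$. Write $d_{ij}=d_X(x_i,x_j)$ and $\Phi=\sum_{i,j}p_iq_jd_{ij}$, and adopt the convention that a summand $a_{ij}b_{ij}/(a_{ij}+b_{ij})$ equals $0$ when $a_{ij}=b_{ij}=0$. First I would unpack~\eqref{eq:AB assumption}: rewriting it as $\sum_k a_{ik}-p_i=q_j-\sum_k b_{kj}$, the left-hand side depends only on $i$ and the right-hand side only on $j$, so both equal a common constant $c\in\R$; hence $\sum_k a_{ik}=p_i+c$ for every $i$ and $\sum_k b_{kj}=q_j-c$ for every $j$. (Nonnegativity of the entries forces $c\in[-\min_i p_i,\min_j q_j]$, but this will be irrelevant since $c$ cancels below.) I would also record the elementary inequalities $a_{ij}b_{ij}/(a_{ij}+b_{ij})\le a_{ij}$ and $a_{ij}b_{ij}/(a_{ij}+b_{ij})\le b_{ij}$.

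Next, fixing any index $r$, the triangle inequality $d_{ij}\le d_{ir}+d_{rj}$ together with bounding the harmonic-mean weight by the \emph{row} sum of $A$ on the $d_{ir}$ part and by the \emph{column} sum of $B$ on the $d_{rj}$ part (these being exactly the sums controlled by~\eqref{eq:AB assumption}) would yield
\begin{multline*}
\sum_{i,j}\frac{a_{ij}b_{ij}}{a_{ij}+b_{ij}}d_{ij}
\le\sum_i d_{ir}\sum_j a_{ij}+\sum_j d_{rj}\sum_i b_{ij}\\
=\sum_i(p_i+c)d_{ir}+\sum_j(q_j-c)d_{rj}=\sum_i p_id_{ir}+\sum_j q_jd_{rj},
\end{multline*}
where the last equality uses $\sum_i d_{ir}=\sum_j d_{rj}$, valid because $d_X$ is symmetric, so that the $c$-terms cancel.

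It then remains to choose $r$ so that $\sum_i p_id_{ir}+\sum_j q_jd_{rj}\le 3\Phi$. I would take $r$ to be a minimizer of $k\mapsto\sum_j q_jd_{kj}$; since $\sum_k p_k\sum_j q_jd_{kj}=\Phi$ and $\sum_k p_k=1$, a weighted average dominates its minimum, so $\sum_j q_jd_{rj}\le\Phi$. For the other term, applying the triangle inequality once more in the form $d_{ir}\le d_{ij}+d_{jr}$, multiplying by $p_iq_j$, summing over $i,j$ and using $\sum_i p_i=\sum_j q_j=1$ and symmetry, one gets $\sum_i p_id_{ir}\le\Phi+\sum_j q_jd_{rj}\le 2\Phi$; adding the two estimates completes the proof. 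There is no serious obstacle here; the only thing to get right — and the reason the constant is $3$ rather than $1$ — is that, in contrast with Lemma~\ref{lem:harmonic}, the ``center'' must itself be one of the given points $x_r$, the best such center is only guaranteed to be $\Phi$-close to the $q$-distribution, and the $p$-side then costs an extra $2\Phi$ through the second triangle inequality. As the remark before the lemma notes, this loss is harmless for a single application but would accumulate under iteration, which is why a factor-free analogue of Lemma~\ref{lem:harmonic} for general square-roots of metrics does not follow from this argument.
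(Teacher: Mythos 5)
Your proof is correct, and it arrives at the constant $3$ by the same overall skeleton as the paper's argument --- reduce the claim to bounding $\sum_i p_i d_X(x_i,z)+\sum_j q_j d_X(x_j,z)$ by $3\Phi$ (with $\Phi=\sum_{i,j}p_iq_jd_X(x_i,x_j)$) for one well-chosen center $z$, and then distribute this quantity over the pairs $(i,j)$ via~\eqref{eq:AB assumption}, the bound $\frac{a_{ij}b_{ij}}{a_{ij}+b_{ij}}\le\min\{a_{ij},b_{ij}\}$ and the triangle inequality --- but your realization of the center is genuinely different. The paper embeds $\{x_1,\ldots,x_n\}$ isometrically into $\ell_\infty$ and takes $z=\sum_k q_kF(x_k)$, the honest convex-combination barycenter of the $q$-distribution: convexity of the norm gives $\sum_i p_i\|F(x_i)-z\|_\infty\le\Phi$, and a comparison with the $p$-barycenter gives $\sum_j q_j\|F(x_j)-z\|_\infty\le 2\Phi$. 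You instead force the center to be one of the given points, choosing $x_r$ as a $q$-weighted $1$-median, which costs $\Phi$ on the $q$-side and $2\Phi$ on the $p$-side; the roles of $p$ and $q$ in the $\Phi+2\Phi$ split are swapped relative to the paper, but the arithmetic is the same. Your route is more intrinsic (no auxiliary ambient space at all), and it also makes explicit a point the paper passes over silently: condition~\eqref{eq:AB assumption} determines the row sums of $A$ and the column sums of $B$ only up to a common additive constant $c$, and one must note that the $c$-terms cancel because the two weighted sums of distances to the center range over the same points --- the paper's equality justified by~\eqref{eq:AB assumption} relies on exactly this cancellation. The paper's version has the virtue of running in visible parallel to the barycentric proof of Lemma~\ref{lem:harmonic}, with the $\ell_\infty$ convex combination standing in for the Hadamard barycenter; yours shows that for the lossy constant-$3$ statement no surrogate barycenter outside $X$ is needed.
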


\begin{proof} Let $F:\{x_1,\ldots,x_n\}\to \ell_\infty$ be any isometric embedding of the metric space $(\{x_1,\ldots,x_n\},d_X)$ into $\ell_\infty$. By convexity we have
\begin{multline*}
\sum_{i=1}^n p_i\Big\|F(x_i)-\sum_{j=1}^n q_j F(x_j)\Big\|_\infty=\sum_{i=1}^n p_i\Big\|\sum_{j=1}^n q_j(F(x_i)- F(x_j))\Big\|_\infty\\ \le \sum_{i=1}^n\sum_{j=1}^n p_iq_j\|F(x_i)-F(x_j)\|_\infty=\sum_{i=1}^n\sum_{j=1}^n p_iq_jd_X(x_i,x_j),
\end{multline*}
and similarly,
\begin{multline*}
\sum_{i=1}^n q_i\Big\|F(x_i)-\sum_{j=1}^n q_j F(x_j)\Big\|_\infty\le \sum_{i=1}^n q_i\Big\|F(x_i)-\sum_{j=1}^n p_j F(x_j)\Big\|_\infty+\Big\|\sum_{i=1}^n\sum_{j=1}^n p_iq_j(F(x_i)-F(x_j))\Big\|_\infty\\
\le 2\sum_{i=1}^n\sum_{j=1}^n p_iq_j\|F(x_i)-F(x_j)\|_\infty
=2\sum_{i=1}^n\sum_{j=1}^n p_iq_jd_X(x_i,x_j).
\end{multline*}
So, if we denote $z\eqdef \sum_{k=1}^n q_k F(x_k)$ then
\begin{align*}
3\sum_{i=1}^n\sum_{j=1}^n p_iq_jd_X(x_i,x_j)&\ge \sum_{i=1}^n p_i\|F(x_i)-z\|_\infty+\sum_{i=1}^n q_j\|F(x_j)-z\|_\infty\\
&\!\stackrel{\eqref{eq:AB assumption}}{=}\sum_{i=1}^n\sum_{j=1}^n \left(a_{ij}\|F(x_i)-z\|_\infty+b_{ij}\|F(x_j)-z\|_\infty\right)\\
&\ge \sum_{i=1}^n\sum_{j=1}^n \min\{a_{ij},b_{ij}\}\left(\|F(x_i)-z\|_\infty+\|F(x_j)-z\|_\infty\right)\\
&\ge \sum_{i=1}^n\sum_{j=1}^n\frac{a_{ij}b_{ij}}{a_{ij}+b_{ij}}\|F(x_i)-F(x_j)\|_\infty\\
&= \sum_{i=1}^n\sum_{j=1}^n\frac{a_{ij}b_{ij}}{a_{ij}+b_{ij}}d_X(x_i,x_j).\qedhere
\end{align*}
\end{proof}
\subsection{A hierarchy of quadratic metric inequalities}\label{sec:hierarchy} The quadratic metric inequalities of Section~\ref{sec:iteration}  are part of a first level of a hierarchy of quadratic metric inequalities that hold true in any Hadamard space. We shall now describe these inequalities, which quickly become quite complicated and unwieldy.  We conjecture that the entire hierarchy of inequalities thus obtained characterizes subsets of Hadamard spaces; see Question~\ref{Q:hierarchy} below. Due to the generality of these inequalities, this conjecture could be quite tractable. But, even if it has a positive answer then it would yield a complicated, and therefore perhaps less useful, characterization of subsets of Hadamard spaces, and it would still be very interesting to find a smaller family of inequalities that characterizes subsets of Hadamard spaces, in the spirit of Question~\ref{eq:is this all?}.

Let $(X,d_X)$ be a Hadamard space. The barycentric inequality~\eqref{eq:B axiom} has the following counterpart as a formal consequence, which is an inequality that allows one to control the distance between barycenters of two probability measures. Let $\mu,\nu$ be finitely supported probability measures on $X$. By applying~\eqref{eq:B axiom} twice we see that
\begin{multline*}
d_X(\B(\nu),\mathfrak{B}(\mu))^2+\int_X d_X(\mathfrak{B}(\mu),x)^2\d \mu(x)\le \int_X d_X(\B(\nu),x)^2\d \mu(x)\\
\le \int_X \left(\int_X d_X(x,y)\d \nu(y)-\int_X d_X(\B(\nu),y)^2\d \nu(y)\right)\d \mu(x).
\end{multline*}
Thus
\begin{multline}\label{eq:two centers}
d_X(\B(\nu),\mathfrak{B}(\mu))^2+\int_X d_X(\mathfrak{B}(\mu),x)^2\d \mu(x)+\int_X d_X(\B(\nu),y)^2\d \nu(y)\\\le \iint_{X\times X} d_X(x,y)^2\d \mu(x)\d \nu(y).
\end{multline}
Both~\eqref{eq:B axiom} and~\eqref{eq:two centers} will be used repeatedly in what follows.

\subsubsection{An inductive construction}\label{sec:iterated barycenters} Fix $n\in \N$ and $x_1,\ldots,x_n\in X$. Fix also a sequence of integers $\{m_s\}_{s=0}^\infty\subset \N$ with $m_0=n$. Suppose that we are given $\mu^{k+1,b}_{s,a}\in [0,\infty)$ for every $k\in \N\cup\{0\}$,  $s\in \{0,\ldots,k\}$, $a\in \{1,\ldots, m_s\}$ and $b\in \{1,\ldots,m_{k+1}\}$, such that
 \begin{equation*}
\forall\, b\in \{1,\ldots,m_{k+1}\}, \qquad \sum_{s=0}^{k}\sum_{a=1}^{m_s} \mu^{k+1,b}_{s,a}=1.
\end{equation*}

We shall now proceed to define by induction on $k\in \N\cup \{0\}$ auxiliary points $x_a^s\in X$ for every $s\in \{0,\ldots,k\}$ and $a\in \{1,\ldots,m_s\}$. Our construction will also yield for every $i,j\in \n$, $s,t,\sigma,\tau\in \{0,\ldots,k\}$, $a\in \{1,\ldots,m_s\}$, $\alpha\in \{1,\ldots,m_\sigma\}$, $b\in \{1,\ldots,m_t\}$ and $\beta\in \{1,\ldots,m_\tau\}$ nonnegative weights $U^{s,t,a,b}_{\sigma,\tau,\alpha,\beta}, V_{i,j}^{s,t,a,b}\in [0,\infty)$  that satisfy the inequality
\begin{equation}\label{eq:UV ineq}
d_X(x_a^s,x_b^t)^2+\sum_{\sigma=0}^k\sum_{\tau=0}^k\sum_{\alpha=1}^{m_\sigma}\sum_{\beta=1}^{m_\tau} U^{s,t,a,b}_{\sigma,\tau,\alpha,\beta}d_X(x_\alpha^\sigma,x_\beta^\tau)^2 \le \sum_{i=1}^n\sum_{j=1}^n V_{i,j}^{s,t,a,b} d_X(x_i,x_j)^2.
\end{equation}

The induction starts by setting $x^0_a=x_a$ for $a\in \n$. Also, for every $a,b,\alpha,\beta\in \n$  set  $U^{0,0,a,b}_{0,0,\alpha,\beta}=0$ and $V_{\alpha,\beta}^{0,0,a,b}=\1_{\{(\alpha,\beta)=(a,b)\}}$, thus satisfying~\eqref{eq:UV ineq} vacuously.

Suppose now that we have defined $x_a^s\in X$ for every $s\in \{0,\ldots,k\}$ and $a\in \{1,\ldots,m_s\}$. Consider the probability measures
$$
\forall\, b\in \{1,\ldots,m_{k+1}\}, \qquad \mu^{k+1,b}\eqdef \sum_{s=0}^{k}\sum_{a=1}^{m_s} \mu^{k+1,b}_{s,a} \delta_{x_a^s},
$$
and define
$$
\forall\, b\in \{1,\ldots,m_{k+1}\}, \qquad x_b^{k+1}\eqdef \B\left(\mu^{k+1,b}\right).
$$

 Suppose that $s\in \{0,\ldots,k\}$, $a\in \{1,\ldots,m_s\}$ and $b\in \{1,\ldots,m_{k+1}\}$. Then by~\eqref{eq:B axiom} we have
$$
d_X(x_a^s,x_b^{k+1})^2+\sum_{\tau=0}^{k}\sum_{\beta=1}^{m_\tau} \mu^{k+1,b}_{\tau,\beta} d_X(x_b^{k+1},x_\beta^\tau)^2\le \sum_{t=0}^{k}\sum_{c=1}^{m_t} \mu^{k+1,b}_{t,c} d_X(x_a^s,x_c^t)^2.
$$
In combination with the inductive hypothesis~\eqref{eq:UV ineq}, this implies that the desired estimate~\eqref{eq:UV ineq} would also hold true when $|\{s,t\}\cap {k+1}|=1$ once we introduce the following inductive definitions.
 \begin{align*}
U^{k+1,s,b,a}_{\sigma,\tau,\beta,\alpha}&=U^{s,k+1,a,b}_{\sigma,\tau,\alpha,\beta}\\&\eqdef \1_{\{(\sigma,\alpha,\tau)\in \{k+1\}\times \{b\}\times \{0,\ldots,k\}\}}\mu^{k+1,b}_{\tau,\beta}
+\1_{\{\{\sigma,\tau\}\subset \{0,\ldots,k\}\}}\sum_{t=0}^{k}\sum_{c=1}^{m_t} \mu^{k+1,b}_{t,c}U^{s,t,a,c}_{\sigma,\tau,\alpha,\beta},
\end{align*}
and
$$
V^{s,k+1,a,b}_{i,j}\eqdef \sum_{t=0}^{k}\sum_{c=1}^{m_t} \mu^{k+1,b}_{t,c}V_{i,j}^{s,t,a,c}.
$$

It remains to ensure the validity of~\eqref{eq:UV ineq} when $s=t=k+1$. So, fix $a,b\in \{1,\ldots,k+1\}$ and apply~\eqref{eq:two centers} so as to obtain the estimate
\begin{align*}
d_X(x_a^{k+1},x_b^{k+1})^2&+\sum_{\tau=0}^{k}\sum_{\beta=1}^{m_\tau} \mu^{k+1,a}_{\tau,\beta} d_X(x_a^{k+1},x_\beta^\tau)^2+\sum_{\tau=0}^{k}\sum_{\beta=1}^{m_\tau} \mu^{k+1,b}_{\tau,\beta} d_X(x_b^{k+1},x_\beta^\tau)^2\\
&\le \sum_{t=0}^k\sum_{\theta=0}^k\sum_{p=1}^{m_t}\sum_{q=1}^{m_\theta} \mu^{k+1,a}_{t,p} \mu^{k+1,b}_{\theta,q}  d_X(x_p^t,x_q^\theta)^2.
\end{align*}
In combination with the inductive hypothesis~\eqref{eq:UV ineq}, this implies that the desired estimate~\eqref{eq:UV ineq} would also hold true when $s=t=k+1$ once we introduce the following inductive definitions.
\begin{align*}
&U^{k+1,k+1,a,b}_{\sigma,\tau,\alpha,\beta}\\&\eqdef \1_{\{(\sigma,\alpha,\tau)\in \{k+1\}\times\{a,b\}\times \{0,\ldots,k\}\}}\mu_{\tau,\beta}^{k+1,\alpha}+ \1_{\{\{\sigma,\tau\}\subset \{0,\ldots,k\}\}}\sum_{t=0}^k\sum_{\theta=0}^k\sum_{p=1}^{m_t}\sum_{q=1}^{m_\theta} \mu^{k+1,a}_{t,p} \mu^{k+1,b}_{\theta,q}U^{t,\theta,p,q}_{\sigma,\tau,\alpha,\beta},
\end{align*}
and
$$
V_{i,j}^{k+1,k+1,a,b}=\sum_{t=0}^k\sum_{\theta=0}^k\sum_{p=1}^{m_t}\sum_{q=1}^{m_\theta} \mu^{k+1,a}_{t,p} \mu^{k+1,b}_{\theta,q}V_{i,j}^{t,\theta,p,q}.
$$

This concludes our inductive construction of auxiliary points, which satisfy the inequality~\eqref{eq:UV ineq}. We shall now show how to remove the auxiliary points so as to obtain bona fide quadratic metric inequalities that involve only points from the subset $\{x_1,\ldots,x_n\}\subset X$.

\subsubsection{Deriving quadratic metric inequalities}\label{sec:deriving} Suppose that for every $s,t\in \{0,\ldots, k\}$, $a\in \{0,\ldots,m_s\}$ and $b\in \{0,\ldots,m_t\}$ we are given a nonnegative weight $\Gamma^{s,t}_{a,b}\in [0,\infty)$. By multiplying~\eqref{eq:UV ineq} by $\Gamma^{s,t}_{a,b}$ and summing the resulting inequalities, we obtain the estimate
\begin{equation}\label{eq:EF summed}
\sum_{s=0}^k\sum_{t=0}^k\sum_{a=1}^{m_s}\sum_{b=1}^{m_t} E^{s,t}_{a,b} d_X(x_a^s,x_b^t)^2\le \sum_{i=1}^n\sum_{j=1}^n F_{i,j} d_X(x_i,x_j)^2,
\end{equation}
where
$$
E^{s,t}_{a,b}\eqdef \Gamma^{s,t}_{a,b}+\sum_{\sigma=0}^k\sum_{\tau=0}^k\sum_{\alpha=1}^{m_\sigma}\sum_{\beta=1}^{m_\tau} \Gamma^{\sigma,\tau}_{\alpha,\beta}U^{\sigma,\tau,\alpha,\beta}_{s,t,a,b},
$$
and
$$
F_{i,j}\eqdef \sum_{s=0}^k\sum_{t=0}^k\sum_{a=1}^{m_s}\sum_{b=1}^{m_t} \Gamma^{s,t}_{a,b} V_{ij}^{s,t,a,b}.
$$

Denote
$$\mathsf{S}_k\eqdef \big\{x_a^s:\ s\in \{0,\ldots,k\}\ \mathrm{and}\ a\in \{1,\ldots, m_s\}\big\}\subset X.$$
Any $\zeta\in \bigcup_{\ell=1}^\infty \mathsf{S}_k^\ell$ will be called below a path in $\mathsf{S}_k$. If  $\zeta=(\zeta_0,\ldots,\zeta_\ell)$ for some $\ell\in \N$ then we  write $\ell(\zeta)=\ell$.  The points $\zeta_0, \zeta_{\ell(\zeta)}$ are called the endpoints of the path $\zeta$. The path $\zeta$ is called non-repetitive if the points $\zeta_0,\ldots,\zeta_{\ell(\zeta)}$ are distinct. The finite set of all non-repetitive paths $\zeta$ in $\mathsf{S}_k$ whose endpoints satisfy $\{\zeta_0,\zeta_{\ell(\zeta)}\}\subset \{x_1,\ldots,x_n\}$ will be denoted below by $\mathsf{P}_{\!k}$. Suppose that for every path $\zeta\in \mathsf{P}_{\!k}$ we are given $c_1(\zeta),\ldots,c_{\ell(\zeta)}(\zeta)\in (0,\infty)$ such that for every $s,t\in \{0,\ldots,k\}$, $a\in \{1,\ldots,m_s\}$ and $b\in \{1,\ldots, m_t\}$ we have
$$
\sum_{\zeta\in \mathsf{P}_{\!k}} \sum_{r=1}^{\ell(\zeta)}c_r(\zeta)\1_{\{(\zeta_{r-1},\zeta_r)=(x_a^s,x_b^t)\}}=E^{s,t}_{a,b}.
$$
Then the inequality~\eqref{eq:EF summed} can be rewritten as follows.
\begin{equation}\label{eq:redistributed path}
\sum_{\zeta\in \mathsf{P}_{\!k}}\sum_{r=1}^{\ell(\zeta)} c_r(\zeta) d_X(\zeta_{r-1},\zeta_r)^2\le \sum_{i=1}^n\sum_{j=1}^n F_{i,j} d_X(x_i,x_j)^2.
\end{equation}

By the triangle inequality  and Cauchy–-Schwarz, every $\zeta\in \mathsf{P}_{\!k}$ satisfies
\begin{equation}
d_X(\zeta_0,\zeta_{\ell(\zeta)})^2\le\bigg(\sum_{r=1}^{\ell(\zeta)}\frac{1}{\sqrt{c_r(\zeta)}}\cdot \sqrt{c_r(\zeta)}d_X(\zeta_{r-1},\zeta_r)\bigg)^2
\le \bigg(\sum_{r=1}^{\ell(\zeta)}\frac{1}{c_r(\zeta)}\bigg)\sum_{r=1}^{\ell(\zeta)}c_r(\zeta)d_X(\zeta_{r-1},\zeta_r)^2.\label{eq:CS cs}
\end{equation}
By combining~\eqref{eq:redistributed path} and~\eqref{eq:CS cs} we therefore see that
\begin{equation}\label{eq:harmonic mean}
\sum_{\zeta\in \mathsf{P}_{\!k}}\frac{d_X(\zeta_0,\zeta_{\ell(\zeta)})^2}{\sum_{r=1}^{\ell(\zeta)}\frac{1}{c_r(\zeta)}}\le \sum_{i=1}^n\sum_{j=1}^n F_{i,j} d_X(x_i,x_j)^2.
\end{equation}
Recall that by the definition of $\mathsf{P}_{\!k}$,  the endpoints $\zeta_0,\zeta_{\ell(\zeta)}$ of any path $\zeta\in \mathsf{P}_{\!k}$ are in $\{x_1,\ldots,x_n\}$. It therefore follows from~\eqref{eq:harmonic mean} that if we define for every $i,j\in \n$
\begin{equation}\label{eq:GF}
G_{i,j}\eqdef \sum_{\substack{\zeta\in \mathsf{P}_{\!k}\\ (\zeta_0,\zeta_{\ell(\zeta)})=(x_i,x_j)}}\frac{1}{\sum_{r=1}^{\ell(\zeta)}\frac{1}{c_r(\zeta)}},
\end{equation}
then the following quadratic metric inequality, which generalizes~\eqref{eq:collect terms2}, holds true in every Hadamard space $(X,d_X)$.
\begin{equation*}
\sum_{\substack{i,j\in \n \\ G_{i,j}>F_{i,j}}} (G_{i,j}-F_{i,j}) d_X(x_i,x_j)^2\le \sum_{\substack{i,j\in \n\\ F_{i,j}>G_{i,j}}} (F_{i,j}-G_{i,j}) d_X(x_i,x_j)^2.
\end{equation*}

 \begin{question}\label{Q:hierarchy}
Is it true that for every $D\in [1,\infty)$ there exists some $C(D)\in [1,\infty)$ such that a metric space $(X,d_X)$ embeds with distortion at most $C(D)$ into some Hadamard space provided \begin{equation*}
\sum_{\substack{i,j\in \n \\ G_{i,j}>F_{i,j}}} (G_{i,j}-F_{i,j}) d_X(x_i,x_j)^2\le D^2\sum_{\substack{i,j\in \n\\ F_{i,j}>G_{i,j}}} (F_{i,j}-G_{i,j}) d_X(x_i,x_j)^2,
\end{equation*}
for every $n\in \N$, every $x_1,\ldots,x_n\in X$ and every $\{F_{i,j},G_{i,j}\}_{i,j\in \n}$ as in~\eqref{eq:GF}? Here we are considering all those $\{F_{i,j},G_{i,j}\}_{i,j\in \n}$ that  are obtained from the construction that is described in Section~\ref{sec:iterated barycenters}  and Section~\ref{sec:deriving}, i.e., ranging over all the possible choices of weights $\mu^{k+1,b}_{s,a}, \Gamma^{s,t}_{a,b}, c_r(\zeta)$ that were introduced in the course of this construction.
 \end{question}
We conjecture that the answer to Question~\ref{Q:hierarchy} is positive. It may even be the case that one could take $C(D)=D$ in Question~\ref{Q:hierarchy}. A negative answer here would be of great interest, since it would require finding a family of quadratic metric inequalities that does not follow (even up to a constant factor) from the above hierarchy of inequalities.

\section{Remarks on Question~\ref{Q:dichotomy sharp wasserstein}}\label{sec:dichotomy question remarks}

Focusing for concreteness on the case $p=2$ of Question~\ref{Q:dichotomy sharp wasserstein}, recall that we are asking whether every $n$-point metric space $(X,d_X)$ satisfies
\begin{equation}\label{eq:goal sqrt dich}
c_{\mathscr{P}_{\!2}(\R^3)}(X)\lesssim \sqrt{\log n}.
\end{equation}

The conclusion of Theorem~\ref{thm:main snowflake}, i.e., the fact that the $1/2$-snowflake of every finite metric space embeds with $O(1)$ distortion into $\mathscr{P}_{\!2}(\R^3)$, does not on its own imply~\eqref{eq:goal sqrt dich}.  Indeed, let $\sqrt{\ell_\infty}$ denote the $1/2$-snowflake of $\ell_\infty$. Then the $1/2$-snowflake of every finite metric space embeds isometrically into $\sqrt{\ell_\infty}$. However, it is standard to check that if for $n\in \N$ we let $P_n$ denote the set $\n\subset \R$, equipped with the metric inherited from $\R$, then $c_{\sqrt{\ell_\infty}}(P_n)\gtrsim \sqrt{n}$. Thus, despite the fact that $\sqrt{\ell_\infty}$ is $1/2$-snowflake universal, the distortion of $n$-point metric spaces in $\sqrt{\ell_\infty}$ can grow much faster than the rate of $\sqrt{\log n}$ that we desire in~\eqref{eq:goal sqrt dich}. Nevertheless, $\sqrt{\ell_\infty}$ is not an especially convincing example in our context, since it does not contain rectifiable curves (which is essentially the reason for the lower bound $c_{\sqrt{\ell_\infty}}(P_n)\gtrsim \sqrt{n}$), while $\mathscr{P}_{\!2}(\R^3)$ is an Alexandrov space of nonnegative curvature.

Note that $c_{\mathscr{P}_{\!2}(\R^3)}(X)\lesssim \log  n$ for every $n$-point metric space $(X,d_X)$, so $\mathscr{P}_{\!2}(\R^3)$ certainly does not exhibit the bad behavior that we described above for embeddings into $\sqrt{\ell_\infty}$. This logarithmic upper bound follows from the fact that $c_{\mathscr{P}_{\!2}([0,1])}(X)\lesssim \log  n$, so in fact $c_{\Pp(Y)}(X)\lesssim \log n$ for  every metric space $(Y,d_Y)$ that contains a geodesic segment and every $n$-point metric space $(X,d_X)$. The bound $c_{\mathscr{P}_{\!2}([0,1])}(X)\lesssim \log  n$ is a consequence of Bourgain's embedding theorem~\cite{Bou85} combined with the easy fact that every finite subset of $\ell_2$ embeds with distortion $1$ into $\mathscr{P}_{\!2}([0,1])$. To check the latter assertion, take any $X\subset \ell_2$ of cardinality $n$. We may assume without loss of generality that $X\subset \R^n$. Denoting
$$
M\eqdef 1+\max_{x\in X}\max_{j\in \{1,\ldots,n-1\}} |x_{j+1}-x_j|,
$$
define $f:X\to \mathscr{P}_{\!2}(\R)$ by
$
f(x)\eqdef \sum_{j=1}^n \delta_{x_j+Mj}.
$
The choice of $M$ ensures that the sequence $\{x_j+Mj\}_{j=1}^n$ is strictly increasing, so for $x,y\in X$ the optimal transportation between $f(x)$ and $f(y)$ assigns the point mass at $x_j+Mj$ to the point mass at $y_j+Mj$ for every $j\in \n$. This shows that $\W_2(f(x),f(y))=\|x-y\|_2$. Since all the measures $\{f(x)\}_{x\in X}$ are supported on a bounded interval, by rescaling we obtain a distortion $1$ embedding of $X$ into  $\mathscr{P}_{\!2}([0,1])$.

An example that is more interesting in our context than $\sqrt{\ell_\infty}$, though still somewhat artificial, is the space $(\ell_2\oplus \sqrt{\ell_\infty})_2$. This space is $1/2$-snowflake universal (since it contains an isometric copy of $\sqrt{\ell_\infty}$) and also every $n$-point metric space $(X,d_X)$ satisfies $c_{(\ell_2\oplus \sqrt{\ell_\infty})_2}(X)\lesssim \log n$ (by Bourgain's theorem~\cite{Bou85}, since $(\ell_2\oplus \sqrt{\ell_\infty})_2$ contains an isometric copy of $\ell_2$). However, we shall prove below the following lemma which shows  that the conclusion of Question~\ref{Q:dichotomy sharp wasserstein} fails for $(\ell_2\oplus \sqrt{\ell_\infty})_2$.

\begin{lemma}\label{lem:sum of hilbert and sqrt l infty}
For arbitrarily large $n\in \N$ there exists an $n$-point metric space $(X_n,d_{X_n})$ that satisfies  $$c_{(\ell_2\oplus \sqrt{\ell_\infty})_2}(X_n)\gtrsim \log n.$$
\end{lemma}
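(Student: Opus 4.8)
The plan is to take $X_n$ to be a heavily subdivided expander. Fix a large $m$ for which there is a $3$-regular expander $G_m=(V_m,E_m)$ on $m$ vertices; the argument will use three standard facts about such graphs: $|E_m|=\tfrac32 m$; the diameter satisfies $\diam(G_m)\lesssim\log m$ while, merely by bounded degree, a constant fraction of pairs in $V_m\times V_m$ are at distance $\gtrsim\log m$, so $\sum_{u,v\in V_m}d_{G_m}(u,v)^2\gtrsim m^2(\log m)^2$; and $G_m$ satisfies the $\ell_2$-Poincar\'e inequality, i.e.\ there is an absolute $C_0$ with $\sum_{u,v\in V_m}\|\psi(u)-\psi(v)\|_2^2\le C_0\tfrac{m^2}{|E_m|}\sum_{\{u,v\}\in E_m}\|\psi(u)-\psi(v)\|_2^2$ for every $\psi:V_m\to\ell_2$. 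Let $G_m^{(m)}$ be obtained from $G_m$ by replacing each edge with a path of $m$ edges, let $X_n$ be its vertex set equipped with the shortest-path metric $d$, so that $n=m+\tfrac32 m(m-1)\asymp m^2$ and $\log n\asymp\log m$. Two properties of $X_n$ will be exploited: for original vertices $u,v\in V_m$ one has $d(u,v)=m\,d_{G_m}(u,v)$, and such $u,v$ are joined in $X_n$ by a path consisting of exactly $m\,d_{G_m}(u,v)$ unit-length edges. Thus the subdivision has blown the aspect ratio of $d$ up to order $m\log m$ — which is essential, since any $n$-point space of aspect ratio $\lesssim(\log n)^2$ already embeds into $\sqrt{\ell_\infty}\subseteq(\ell_2\oplus\sqrt{\ell_\infty})_2$ with distortion $\lesssim\log n$ — while leaving the metric that $V_m$ inherits a mere dilation of $d_{G_m}$.

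Next I would set up the embedding. Let $f=(g,h):X_n\to(\ell_2\oplus\sqrt{\ell_\infty})_2$ have distortion $D$, where $g$ is valued in $\ell_2$ and $h$ in $\ell_\infty$, and recall $d_{(\ell_2\oplus\sqrt{\ell_\infty})_2}\big((\xi,\eta),(\xi',\eta')\big)=\big(\|\xi-\xi'\|_2^2+\|\eta-\eta'\|_\infty\big)^{1/2}$. After rescaling assume $d(x,y)\le\big(\|g(x)-g(y)\|_2^2+\|h(x)-h(y)\|_\infty\big)^{1/2}\le D\,d(x,y)$ for all $x,y$; by Bourgain's theorem~\cite{Bou85} (applied with target $\ell_2$) we may also assume $D\le\log n$, since otherwise there is nothing to prove. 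On every unit edge $\{x,y\}$ of $G_m^{(m)}$ this forces $\|g(x)-g(y)\|_2\le D$ and $\|h(x)-h(y)\|_\infty\le D^2$, so walking along a geodesic of $m\,d_{G_m}(u,v)$ such edges and using the triangle inequality in $\ell_2$ and in $\ell_\infty$ respectively gives $\|g(u)-g(v)\|_2\le m\,d_{G_m}(u,v)\,D$ and $\|h(u)-h(v)\|_\infty\le m\,d_{G_m}(u,v)\,D^2$ for all $u,v\in V_m$. The second bound is the crux: the $\sqrt{\ell_\infty}$-coordinate contributes to the \emph{squared} distance only the \emph{linear} amount $m\,d_{G_m}(u,v)D^2$, whereas the lower bound on $f$ demands the \emph{quadratic} amount $\big(m\,d_{G_m}(u,v)\big)^2$; the subdivision has diluted the snowflake coordinate by a factor $\asymp m$.

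Then I would run the Poincar\'e argument on $g|_{V_m}$. From $\big(m\,d_{G_m}(u,v)\big)^2\le\|g(u)-g(v)\|_2^2+\|h(u)-h(v)\|_\infty$ together with the linear $\ell_\infty$-bound, $\|g(u)-g(v)\|_2^2\ge m^2 d_{G_m}(u,v)^2-m\,d_{G_m}(u,v)\,D^2$ for all $u,v\in V_m$; summing over $V_m\times V_m$ and using $\sum_{u,v}d_{G_m}(u,v)^2\gtrsim m^2(\log m)^2$ and $\sum_{u,v}d_{G_m}(u,v)\le\diam(G_m)\,m^2\lesssim m^2\log m$ yields $\sum_{u,v\in V_m}\|g(u)-g(v)\|_2^2\ge c_1 m^4(\log m)^2-c_2 m^3 D^2\log m$ for absolute constants $c_1,c_2>0$. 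On the other hand, since each edge of $G_m$ corresponds to a length-$m$ path in $X_n$, one has $\|g(u)-g(v)\|_2\le mD$ for $\{u,v\}\in E_m$, so the $\ell_2$-Poincar\'e inequality gives $\sum_{u,v\in V_m}\|g(u)-g(v)\|_2^2\le C_0\tfrac{m^2}{|E_m|}\cdot|E_m|\cdot(mD)^2=C_0 m^4 D^2$. Comparing the two estimates and dividing by $m^4$, $C_0 D^2\ge c_1(\log m)^2-c_2 D^2(\log m)/m$; since $D\le\log n\asymp\log m$, the subtracted term is $o\big((\log m)^2\big)$ for $m$ large, so $C_0 D^2\ge\tfrac{c_1}{2}(\log m)^2$, i.e.\ $D\gtrsim\log m\asymp\log n$. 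Letting $m\to\infty$ over the (infinitely many) sizes for which a bounded-degree expander exists then proves the lemma.

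The main obstacle is exactly the choice of $X_n$. A bare expander yields only $c_{(\ell_2\oplus\sqrt{\ell_\infty})_2}\gtrsim\sqrt{\log n}$, because the $\sqrt{\ell_\infty}$-coordinate can swallow a diameter-$\Delta$ graph metric at distortion $\sqrt\Delta$ and an expander has $\Delta\asymp\log n$; so one genuinely needs to push the aspect ratio past $(\log n)^2$ \emph{without} destroying the expander Poincar\'e inequality on a distinguished subset, and subdividing each edge into a long path does precisely this (the extra vertices lie along geodesics, so they only inflate the aspect ratio and dilute the snowflake coordinate). The remaining work is routine bookkeeping: fixing the normalization in the $\ell_2$-Poincar\'e inequality, invoking the standard logarithmic-diameter and ball-growth estimates for expanders, and justifying the harmless reduction to embeddings with $D\le\log n$.
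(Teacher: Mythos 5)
Your proof is correct and follows essentially the same route as the paper's: subdivide a bounded-degree expander so that the $\sqrt{\ell_\infty}$-coordinate, being Lipschitz on unit edges, contributes only linearly in $d_{G}(u,v)$ to the squared distance between original vertices, and then run the $\ell_2$-Poincar\'e inequality on the Hilbertian coordinate restricted to $V_G$. The only differences are cosmetic: the paper subdivides each edge into $k\asymp\log n$ pieces and absorbs the error term via Cauchy--Schwarz into a minimum of two quantities, whereas you subdivide into $m$ pieces and kill the error term using the diameter bound together with the harmless a priori reduction $D\le\log n$ (for which Bourgain's theorem is not actually needed -- it is just a case split); both bookkeeping schemes work.
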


Of course, $(\ell_2\oplus \sqrt{\ell_\infty})_2$ is still more pathological than $\mathscr{P}_{\!2}(\R^3)$ (in particular, not every pair of points in $(\ell_2\oplus \sqrt{\ell_\infty})_2$ can be joined by a rectifiable curve), and we lowered here the asymptotic growth rate of the largest possible distortion of an $n$-point metric space from the $O(\sqrt{n})$ of $\sqrt{\ell_\infty}$ to the $O(\log n)$ of $(\ell_2\oplus \sqrt{\ell_\infty})_2$ by artificially inserting a copy of $\ell_2$. Nevertheless, the proof of Lemma~\ref{lem:sum of hilbert and sqrt l infty} below illuminates the fact that in order to prove that Question~\ref{Q:dichotomy sharp wasserstein} has  a positive answer one would need to use properties of the Alexandrov space $\mathscr{P}_{\!2}(\R^3)$ that go beyond those that we isolated so far, and in particular it provides a concrete sequence of finite metric spaces for which the conclusion of Question~\ref{Q:dichotomy sharp wasserstein} is at present unknown; see Question~\ref{Q:subdivision} below.

Before proving Lemma~\ref{lem:sum of hilbert and sqrt l infty}, we set some notation. For a finite connected graph $G=(V_G,E_G)$, the shortest-path metric that $G$ induces on $V_G$ is denoted by $d_G$. For $k\in \N$, denote the $k$-fold subdivision of $G$ by $\Sigma_k(G)=(V_{\Sigma_k(G)},E_{\Sigma_k(G)})$, i.e., $\Sigma_k(G)$ is obtained from $G$ by replacing each edge $e\in E_G$ by a path consisting of $k$ edges joining the endpoints of $e$ (the interiors of these paths are disjoint for distinct $e,e'\in E_G$). Thus $|V_{\Sigma_k(G)}|=|V_G|+(k-1)|E_G|$. Note that the metric induced on $V_G\subset V_{\Sigma_k(G)}$ by the shortest-path metric $d_{\Sigma_k(G)}$ of $\Sigma_k(G)$ is a rescaling of $d_G$ by a factor of $k$, i.e.,
\begin{equation}\label{eq:rescale subdivision}
\forall\,x,y\in V_G\subset V_{\Sigma_k(G)},\qquad d_{\Sigma_k(G)}(x,y)=kd_G(x,y).
\end{equation}
 Suppose that $G$ is $d$-regular for some $d\in \N$. The normalized adjacency matrix of $G$, i.e., the $V_G\times V_G$ matrix whose entry at $u,v\in V_G$ equals $1/d$ if $\{u,v\}\in E_G$ and equals $0$ otherwise, is denoted $A_G$. The largest eigenvalue of the symmetric stochastic matrix $A_G$ equals $1$, and the second largest eigenvalue of $A_G$ is denoted $\lambda_2(G)$.

\begin{proof}[Proof of Lemma~\ref{lem:sum of hilbert and sqrt l infty}] Fix $d,n\in \N$. We shall show that if $G=(V_G,E_G)$ is an $n$-vertex $d$-regular graph then
\begin{equation}\label{eq:subdivision lower}
c_{(\ell_2\oplus \sqrt{\ell_\infty})_2}(\Sigma_k(G))\gtrsim  \min\left\{\sqrt{\frac{k\log n}{\log d}},\sqrt{1-\lambda_2(G)}\cdot \frac{\log n}{\log d}\right\}.
\end{equation}
In particular, for, say, $d=3$ and $\lambda_2(G)\le 99/100$, if $k\asymp \log n$ then
$$ c_{(\ell_2\oplus \sqrt{\ell_\infty})_2}(\Sigma_k(G))\asymp \log n\asymp \log |V_{\Sigma_k(G)}|.$$
This implies the validity of Lemma~\ref{lem:sum of hilbert and sqrt l infty} because arbitrarily large graphs with the above requirements are well-known to exist (see e.g.~\cite{HLW06}).

To prove~\eqref{eq:subdivision lower}, take $f:V_{\Sigma_k(G)}\to (\ell_2\oplus \sqrt{\ell_\infty})_2$ and suppose that there exist $s,D\in (0,\infty)$ such that for every  $x,y\in V_{\Sigma_k(G)}$ we have
$$
sd_{\Sigma_k(G)}(x,y)\le d_{(\ell_2\oplus \sqrt{\ell_\infty})_2}(f(x),f(y))\le Dsd_{\Sigma_k(G)}(x,y).
$$
Our goal is to bound $D$ from below. Writing $f(x)=(g(x),h(x))$ for every $x\in V_{\Sigma_k(G)}$, our assumption is that for every distinct $x,y\in V_{\Sigma_k(G)}$,
\begin{equation}\label{eq:sD direct sum}
1\le \frac{\|g(x)-g(y)\|_2^2+\|h(x)-h(y)\|_\infty}{s^2d_{\Sigma_k(G)}(x,y)^2} \le D^2.
\end{equation}

For $x,y\in V_{\Sigma_k(G)}$ with $\{x,y\}\in E_{\Sigma_k(G)}$  by~\eqref{eq:sD direct sum} we have $\|h(x)-h(y)\|_\infty\le s^2D^2=s^2D^2d_{\Sigma_k(G)}(x,y)$. Thus $h:V_{\Sigma_k(G)}\to \ell_\infty$ is $s^2D^2$-Lipschitz, and therefore
\begin{multline}\label{eq:use lipchitz on G}
\frac{1}{n^2}\sum_{x,y\in V_G}\|h(x)-h(y)\|_\infty\le \frac{s^2D^2}{n^2}\sum_{x,y\in V_G}d_{\Sigma_k(G)}(x,y)\\\stackrel{\eqref{eq:rescale subdivision}}{=}\frac{ks^2D^2}{n^2}\sum_{x,y\in V_G}d_{G}(x,y)\le
ks^2D^2\bigg(\frac{1}{n^2}\sum_{x,y\in V_G}d_{G}(x,y)^2\bigg)^{\frac12}.
\end{multline}
Consequently, by~\eqref{eq:sD direct sum} once more we have
\begin{align}\label{eq:l2 part dominant}
\frac{1}{n^2}\sum_{x,y\in V_G}\|g(x)-g(y)\|_2^2&\stackrel{\eqref{eq:sD direct sum}}{\ge} \nonumber \frac{s^2}{n^2}\sum_{x,y\in V_G}d_{\Sigma_k(G)}(x,y)^2-\frac{1}{n^2}\sum_{x,y\in V_G}\|h(x)-h(y)\|_\infty\\
&\stackrel{\eqref{eq:use lipchitz on G}}{\ge} \frac{k^2s^2}{n^2}\sum_{x,y\in V_G}d_G(x,y)^2-ks^2D^2\bigg(\frac{1}{n^2}\sum_{x,y\in V_G}d_{G}(x,y)^2\bigg)^{\frac12}.
\end{align}
At the same time, by the equivalent formulation of spectral gap in terms of a Poincar\'e inequality (see e.g.~\cite[Section~9.1]{Gro83} or~\cite{Mat97,MN14}),
\begin{multline}\label{eq:use poincare}
\frac{1}{n^2}\sum_{x,y\in V_G}\|g(x)-g(y)\|_2^2\le \frac{1}{1-\lambda_2(G)}\cdot \frac{2}{|E_G|}\sum_{\substack{x,y\in V_G\\ \{x,y\}\in E_G}}\|g(x)-g(y)\|_2^2\\
\stackrel{\eqref{eq:sD direct sum}}{\le} \frac{s^2D^2}{1-\lambda_2(G)}\cdot\frac{2}{|E_G|}\sum_{\substack{x,y\in V_G\\ \{x,y\}\in E_G}}d_{\Sigma_k(G)}(x,y)^2\stackrel{\eqref{eq:rescale subdivision}}{=} \frac{2s^2k^2D^2}{1-\lambda_2(G)}.
\end{multline}
By contrasting~\eqref{eq:l2 part dominant} with~\eqref{eq:use poincare} we deduce that
$$
D\gtrsim \min\left\{\bigg(\frac{1-\lambda_2(G)}{n^2}\sum_{x,y\in V_G}d_{G}(x,y)^2\bigg)^{\frac12},\bigg(\frac{k^2}{n^2}\sum_{x,y\in V_G}d_{G}(x,y)^2\bigg)^{\frac14}\right\}.
$$
This lower bound on $D$ implies the desired estimate~\eqref{eq:subdivision lower} since by a standard (and simple) counting argument (see e.g.~\cite[page~193]{Mat97}) the fact that $G$ has $n$ vertices and is $d$-regular implies that
\begin{equation*}
\bigg(\frac{1}{n^2}\sum_{x,y\in V_G}d_G(x,y)^2\bigg)^{\frac12}\gtrsim \frac{\log n}{\log d}. \qedhere
\end{equation*}
\end{proof}

\begin{question}\label{Q:subdivision}
Suppose that $G$ is an $n$-vertex $3$-regular graph with $\lambda_2(G)\le 99/100$. What is the asymptotic growth rate of
$$
c_{\mathscr{P}_{\!2}(\R^3)}\left(\Sigma_{\lceil \log n\rceil}(G)\right)?
$$
At present, the best known upper bound on this quantity is $O(\log n)$, while Question~\ref{Q:dichotomy sharp wasserstein} predicts that it is $O(\sqrt{\log n})$. Obtaining any $o(\log n)$ upper bound would be interesting here.
\end{question}


\bibliographystyle{abbrv}
\bibliography{w2}
\end{document}